\definecolor{blue}{rgb}{0,0,0.7}
\definecolor{red}{rgb}{0.75, 0, 0}
\definecolor{midnight}{rgb}{0.0,0.2,0.4}
\numberwithin{equation}{section}
    \newenvironment{dedication}
        {\vspace{0ex}\begin{quotation}\begin{center}\begin{em}}
        {\par\end{em}\end{center}\end{quotation}}
\newtheorem{theorem}{Theorem}[section]
\newtheorem{remark}[theorem]{Remark}
\newtheorem{theorem-definition}[theorem]{Theorem-Definition}
\newtheorem{theorem-construction}[theorem]{Theorem-Construction}
\newtheorem{lemma-definition}[theorem]{Lemma--Definition}
\newtheorem{lemma-construction}[theorem]{Lemma--Construction}
\newtheorem{lemma}[theorem]{Lemma}
\newtheorem{proposition}[theorem]{Proposition}
\newtheorem{corollary}[theorem]{Corollary}
\newtheorem{definition}[theorem]{Definition}
\newtheorem{example}[theorem]{Example}
\newcommand{\be}{\begin{equation}}
\newcommand{\ee}{\end{equation}}
\newcommand{\lra}{\longrightarrow}
\def\C{\mathbb{C}}
\def\R{\mathbb{R}}
\title{Rational Elliptic Surfaces and the Trigonometry of Tetrahedra}
\author{Daniil Rudenko}
\date{}
\begin{document}
\maketitle 
\begin{dedication}
\hspace{2cm}
\vspace*{0.5cm}{To Sonya Pashchevskaya, the bravest person I know}
\end{dedication}

\tableofcontents

%

\begin{abstract}
We study the trigonometry of non-Euclidean tetrahedra using tools from algebraic geometry. We establish a bijection between non-Euclidean tetrahedra and certain rational elliptic surfaces. We interpret the edge lengths and the dihedral angles of a tetrahedron as values of period maps for the corresponding surface. As a corollary we show that the cross-ratio of the exponents of the solid angles of a tetrahedron is equal to the cross-ratio of the exponents of the perimeters of its faces. The Regge symmetries of a tetrahedron are related to the  action of the Weyl group $W(D_6)$ on the Picard lattice of the corresponding surface.
\end{abstract}

\pagebreak 

\section{Introduction} 

\subsection{Trigonometry of tetrahedra and rational elliptic surfaces}\label{S1.1} 

Trigonometry is a branch of mathematics that studies the relations involving side  lengths and angles of a triangle. It seems that these relations are fairly well understood though some questions remain unanswered, see \cite[pp. 189-194]{Kle16} and \cite[\S 2]{Tju75}.   The situation in higher dimensions is much more complicated. 

By a {\it tetrahedron} we mean a geodesic tetrahedron in $\mathbb{S}^3$, $\mathbb{R}^3$	or $\mathbb{H}^3.$ We call a tetrahedron {\it non-Euclidean} if it is spherical or hyperbolic. The following problem is a subject of three-dimensional trigonometry:\\

\begin{center} \begin{minipage}{30em} 
	How can one determine the dihedral angles of a tetrahedron from its edge lengths?\\
\end{minipage}
\end{center}

This problem admits a straightforward solution: one can write a complicated explicit formula, presenting the dihedral angles as functions of the lengths of edges. Surprisingly, this is not the end of the story. We start with formulating a theorem in three-dimensional trigonometry, which motivated this work.

Let $T$ be a tetrahedron with vertices $A_1, A_2, A_3, A_4.$ Denote by $l_{ij}\in \R$ for $1\leq i< j\leq 4$ the length of an edge $A_i A_j$ and by  $\alpha_{ij}\in \mathbb{R}/2\pi\mathbb{Z}$ the corresponding dihedral angle. Next, consider solid angles $\Omega_{123}, \Omega_{124},\Omega_{134}, \Omega_{234}$ and perimeters  $\Pi_{123}, \Pi_{124}, \Pi_{134}, \Pi_{234}$ of its faces. More explicitly,
\begin{align*}
&\Pi_{123}=l_{12}+l_{13}+l_{23}, \ \ \ \ \Omega_{123}=\alpha_{14}+\alpha_{24}+\alpha_{34}-\pi,\\
&\Pi_{124}=l_{12}+l_{14}+l_{24}, \ \ \ \  \Omega_{124}=\alpha_{13}+\alpha_{23}+\alpha_{34}-\pi, \\
&\Pi_{134}=l_{13}+l_{14}+l_{34}, \ \ \ \ \Omega_{134}=\alpha_{12}+\alpha_{23}+\alpha_{24}-\pi,\\
&\Pi_{234}=l_{23}+l_{24}+l_{34}, \ \ \ \ \Omega_{234}=\alpha_{12}+\alpha_{13}+\alpha_{14}-\pi.
\end{align*}
We also consider the following quantities: 
\begin{align*}
&\Pi_{1234}=l_{12}+l_{23}+l_{34}+l_{41}, \ \ \ \ \Omega_{1234}=\alpha_{12}+\alpha_{23}+\alpha_{34}+\alpha_{41},\\
&\Pi_{1324}=l_{13}+l_{32}+l_{24}+l_{41},\ \ \ \ \Omega_{1324}=\alpha_{13}+\alpha_{32}+\alpha_{24}+\alpha_{41}, \\
&\Pi_{1243}=l_{12}+l_{24}+l_{43}+l_{31}, \ \ \ \ \Omega_{1243}=\alpha_{12}+\alpha_{24}+\alpha_{43}+\alpha_{31}.
\end{align*}
We assemble these numbers into a pair of configurations of eight points in $\mathbb{P}^1=\C\cup\{\infty\}.$ First, we consider a configuration
\begin{align*}
\Omega(T)=\left(1,e^{i\Omega_{123}}, e^{i\Omega_{124}}, e^{i\Omega_{134}}, e^{i\Omega_{234}}, e^{i\Omega_{1234}}, e^{i\Omega_{1324}}, e^{i\Omega_{1243}}\right). 
\end{align*}
Next, we consider a configuration
\begin{align*}
\Pi(T)=
\begin{cases}
\Bigl(1,e^{i\Pi_{123}},e^{i\Pi_{124}},e^{i\Pi_{134}},e^{i\Pi_{234}},e^{i\Pi_{1234}},e^{i\Pi_{1324}}, e^{i\Pi_{1243}}\Bigr)& \text{ if } T \text{ is spherical},	\\
\left(0,\: \Pi_{123},\: \Pi_{124},\: \Pi_{134},\: \Pi_{234},\: \Pi_{1234},\: \Pi_{1324},\: \Pi_{1243}\right) & \text{ if } T \text{ is Euclidean},	\\
\bigl(1,\; e^{\Pi_{123}},\;  e^{\Pi_{124}},\; e^{\Pi_{134}},\; e^{\Pi_{234}},\; e^{\Pi_{1234}},\; e^{\Pi_{1324}},\;  e^{\Pi_{1243}}\: \bigr)  & \text{ if } T \text{ is hyperbolic}.	
\end{cases}
\end{align*}

\begin{theorem} \label{TheoremProjectiveEquivalence} For a tetrahedron $T$ the configurations $\Omega(T)$ and $\Pi(T)$ are projectively equivalent.
\end{theorem}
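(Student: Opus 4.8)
The plan is to reduce first to the spherical case and then recover the Euclidean and hyperbolic statements by analytic continuation in the edge lengths. A spherical tetrahedron $T$ is recorded by its Gram matrix $G=(\cos l_{ij})_{1\le i,j\le 4}$, a positive--definite symmetric matrix with unit diagonal, and conversely every such matrix comes from a unique $T$; the dihedral angles are read off from the dual Gram matrix, namely $-\cos\alpha_{ij}=(G^{-1})_{kl}\big/\sqrt{(G^{-1})_{kk}(G^{-1})_{ll}}$ with $\{k,l\}=\{1,2,3,4\}\setminus\{i,j\}$, while the Desnanot--Jacobi identity gives $\sin\alpha_{ij}=\sqrt{\det G}\,\sin l_{ij}\big/\sqrt{d_{ijk}d_{ijl}}$, where $d_{ijk}=\det G_{ijk}$ denotes the principal minor on a face. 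Thus every entry of $\Omega(T)$ is an explicit algebraic function of the entries of $G$, as is every entry of $\Pi(T)$ via $e^{il_{ij}}=\cos l_{ij}+i\sin l_{ij}$. Projective equivalence of two ordered octuples in $\mathbb{P}^1$ is the equality of five independent cross--ratios, so in principle the theorem is five algebraic identities in the $\cos l_{ij}$; the real task is to see why they hold rather than to grind them out.

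The structure that should make this transparent is already visible in the additive combinatorics of the exponents. Grouping opposite edges, put $A=\alpha_{12}+\alpha_{34}$, $B=\alpha_{13}+\alpha_{24}$, $C=\alpha_{14}+\alpha_{23}$, $S=A+B+C$; then $\Omega_{1234}=A+C$, $\Omega_{1324}=B+C$, $\Omega_{1243}=A+B$, and writing each dihedral angle as half of its opposite--pair sum plus a half--difference one finds that the four face quantities equal $\tfrac S2\pm\delta_a\pm\delta_b\pm\delta_c-\pi$ with the product of the three signs equal to $+1$. Rotating the configuration $\Omega(T)$ by $e^{-iS/2}$ (a Möbius transformation preserving the unit circle), the eight points split canonically into two quadruples --- one from the point $1$ together with the three cycle quantities, one from the four faces --- and the product of the four coordinates in each quadruple equals $1$; the same holds for $\Pi(T)$ after rotating by $e^{-iS_l/2}$, where $S_l$ is the sum of the edge lengths. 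So each configuration carries a canonical involution pairing the two quadruples, which is the hallmark of a hidden elliptic curve --- a fibre of a rational elliptic surface --- behind the scene. I would use this as the bridge into the paper's framework.

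The approach I would actually carry out, matching the title, is to attach to $T$ the rational elliptic surface $X_T$ provided by the bijection in the paper, realise the edge lengths $l_{ij}$ and the dihedral angles $\alpha_{ij}$ as values of a period map for $X_T$, and then observe that the eight numbers building $\Omega(T)$ and the eight building $\Pi(T)$ are two \emph{readings} of one and the same distinguished eight--point configuration on the base $\mathbb{P}^1$ of the fibration: the $l$'s and the $\alpha$'s are periods attached to the same cycles but expressed in the two natural coordinates on the base (equivalently, with respect to the two trivialisations of the local period lattice). Since any two coordinates on $\mathbb{P}^1$ differ by a fractional--linear transformation, the two octuples are projectively equivalent, and the cross--ratio of the exponentials of the solid angles then automatically coincides with that of the exponentials of the perimeters, as in the abstract. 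In this picture Theorem~\ref{TheoremProjectiveEquivalence} is almost tautological; its entire content is displaced into the dictionary between non--Euclidean tetrahedra and rational elliptic surfaces. Building that dictionary --- identifying which eight points of the base are relevant and proving that the period map sends the tetrahedral data to precisely $\Pi_{ijk},\Omega_{ijk}$ and their cyclic companions --- is the main obstacle and where essentially all the work lies.

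Two observations keep the bookkeeping honest and can shorten the write--up. First, polar duality $T\leftrightarrow T^{*}$ (edge lengths $\leftrightarrow$ $\pi$ minus dihedral angles, and vice versa) gives $e^{i\Omega^T_\bullet}=e^{-i\Pi^{T^{*}}_\bullet}$ entry by entry, and on the unit circle $z\mapsto\bar z$ coincides with the Möbius map $z\mapsto 1/z$; hence $\Omega(T)\sim\Pi(T^{*})$ for free, so the theorem is equivalent to the self--dual statement $\Pi(T)\sim\Pi(T^{*})$ and also yields $\Omega(T)\sim\Omega(T^{*})$. Second, once the spherical case is done, the Euclidean configuration $(0,\Pi_{123},\dots)$ is the rescaled limit of the spherical one as the curvature tends to $0$, and the hyperbolic configuration is the analytic continuation to imaginary edge lengths; since all the relevant cross--ratios are restrictions of a single function analytic in the complexified edge lengths and no two marked points collide as long as $T$ is non--degenerate, the identity of cross--ratios propagates to all three geometries.
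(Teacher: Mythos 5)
Your third paragraph correctly identifies the route the paper itself takes: pass to the rational elliptic surface $X_T$, and exhibit $\Pi(T)$ and $\Omega(T)$ as two coordinate readings of a single distinguished octuple of points on a $\mathbb{P}^1$, whence projective equivalence is automatic. But as written this is a plan, not a proof: you explicitly defer the entire dictionary, and that is where the whole content of the theorem lives. Concretely, three things are missing and none is routine. (i) The construction of $X_T$ from $T$ (blowing up $Q$ at the twelve points $H_i\cap H_j\cap Q$ and blowing down four disjoint $(-1)$-curves, \S\,\ref{SectionConstruction}) together with the verification that the period maps of the two $I_2$-fibers compute lengths and angles, $\textup{L}_T=\textup{Res}_{F_1}$ and $\textup{A}_T=\textup{Res}_{F_2}$; this is the bulk of \S\,\ref{SectionEqualityLengths}--\ref{SectionEqualityAngles}. (ii) The identification of the relevant $\mathbb{P}^1$: the eight points do not live on the base of the elliptic fibration, as your phrasing suggests, but on the base of an \emph{admissible conic bundle} $b\colon X_T\to\mathbb{P}^1$ whose restriction to each component of $F_1$ and $F_2$ is an isomorphism; it is this genus-$0$ fibration, with its eight singular fibers, that produces the two coordinates $t_1,t_2$ and the M\"obius map $\psi_{X,b}$ between them. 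Note also that the two readings are not ``periods of the same cycles'': the classes $[b_{p_i}^1-b_{p_8}^1]$ and $[b_{p_i}^2-b_{p_8}^2]$ are different elements of $\textup{Pic}(X_T)$, restricted to different fibers; what is common is only the point $p_i$ on the base of $b$. (iii) The computation showing that in these coordinates the octuple is exactly $(1,e^{\Pi_{123}},\dots,e^{\Pi_{1243}})$ on one side and $(1,e^{i\Omega_{123}},\dots,e^{i\Omega_{1243}})$ on the other, i.e.\ that the eight points correspond to the period values on the roots $\tfrac{e_{ij}+e_{ik}+e_{jk}\pm e_{\varnothing}}{2}$ and $\tfrac{e_{12}+e_{14}+e_{23}+e_{34}}{2}$, etc.\ (the table closing \S\,\ref{SectionConicBundleGeneric} plus Corollary \ref{CorollaryConicFunction}). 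Without (i)--(iii) nothing has been proved.

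The auxiliary observations are sound but do not close the gap. Polar duality does give $\Omega(T)\sim\Pi(T^{\vee})$ in the spherical case, reducing the theorem to $\Pi(T)\sim\Pi(T^{\vee})$, but that statement is no easier than the original. The analytic continuation from the spherical case to the hyperbolic and Euclidean ones is plausible, since equality of cross-ratios is a closed analytic condition in the complexified Gram matrix; but you would need to argue that the relevant parameter domain is connected after complexification and that three reference points remain distinct in the limit (for special tetrahedra some of the eight points do collide). The paper sidesteps this entirely by working from the outset with projective tetrahedra over $\mathbb{C}$, which uniformises all three geometries.
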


For distinct $z_1, z_2, z_3, z_4\in \C$ consider a cross-ratio
$[z_1, z_2, z_3, z_4]=\dfrac{(z_1-z_2)(z_3-z_4)}{(z_1-z_4)(z_3-z_2)} \in \C^{\times}.$ Projective transformations preserve cross-ratios,  so the following corollary holds.

\begin{corollary}  \label{TheoremCrossRatioIdentity}
For a tetrahedron $T$ the following equality holds:
\[
\left[e^{i\Omega_{123}}, e^{
i\Omega_{124}}, e^{i\Omega_{134}}, e^{i\Omega_{234}}\right]=
\begin{cases}
	\left[e^{i\Pi_{123}}, e^{i\Pi_{124}}, e^{i\Pi_{134}}, e^{i\Pi_{234}}\right] &\text{if $T$ is spherical,}\\
	\left[\Pi_{123},\: \Pi_{124},\: \Pi_{134},\: \Pi_{234}\right] &\text{if $T$ is Euclidean,}\\
	\left[e^{\Pi_{123}}, e^{\Pi_{124}}, e^{\Pi_{134}},e^{\Pi_{234}} \right] &\text{if $T$ is hyperbolic.}
\end{cases}
\]
\end{corollary}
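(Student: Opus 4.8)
The plan is to read this off directly from Theorem \ref{TheoremProjectiveEquivalence}. By definition, the assertion that the configurations $\Omega(T)$ and $\Pi(T)$ are projectively equivalent means that there is a M\"obius transformation $\phi\in\mathrm{PGL}_2(\C)$ sending the $k$-th point of $\Omega(T)$ to the $k$-th point of $\Pi(T)$ for each $k=1,\dots,8$. In particular, comparing the second through fifth entries of the two eight-tuples, $\phi$ carries the ordered quadruple $\bigl(e^{i\Omega_{123}}, e^{i\Omega_{124}}, e^{i\Omega_{134}}, e^{i\Omega_{234}}\bigr)$ to the ordered quadruple that is $\bigl(e^{i\Pi_{123}}, e^{i\Pi_{124}}, e^{i\Pi_{134}}, e^{i\Pi_{234}}\bigr)$ when $T$ is spherical, $\bigl(\Pi_{123},\Pi_{124},\Pi_{134},\Pi_{234}\bigr)$ when $T$ is Euclidean, and $\bigl(e^{\Pi_{123}}, e^{\Pi_{124}}, e^{\Pi_{134}}, e^{\Pi_{234}}\bigr)$ when $T$ is hyperbolic.

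Next I would invoke the fact that the cross-ratio $[z_1,z_2,z_3,z_4]$ is invariant under $\mathrm{PGL}_2(\C)$, i.e.\ $[\phi(z_1),\phi(z_2),\phi(z_3),\phi(z_4)]=[z_1,z_2,z_3,z_4]$ for every $\phi\in\mathrm{PGL}_2(\C)$; this is the standard direct check with the explicit formula $[z_1,z_2,z_3,z_4]=\tfrac{(z_1-z_2)(z_3-z_4)}{(z_1-z_4)(z_3-z_2)}$ (with the usual conventions when a pole of $\phi$ occurs among the $z_i$, or one simply works throughout in $\mathbb{P}^1$). Applying this to the $\phi$ above and the quadruple $\bigl(e^{i\Omega_{123}}, e^{i\Omega_{124}}, e^{i\Omega_{134}}, e^{i\Omega_{234}}\bigr)$ yields exactly the three displayed equalities, case by case. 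One should also record that for a genuine non-degenerate tetrahedron the four points in each quadruple are pairwise distinct, so that every cross-ratio appearing is a well-defined element of $\C^{\times}$; in degenerate or limiting situations the identity persists, now interpreted in $\mathbb{P}^1$.

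The main obstacle is therefore not in the corollary at all --- granting Theorem \ref{TheoremProjectiveEquivalence}, it is a purely formal consequence --- but in Theorem \ref{TheoremProjectiveEquivalence} itself, whose proof is the technical heart of the paper and proceeds via the bijection between non-Euclidean tetrahedra and rational elliptic surfaces together with the interpretation of edge lengths and dihedral angles as values of period maps. If instead one wanted to establish just this cross-ratio identity by elementary means, one would have to express both sides in terms of the spherical/hyperbolic cosine and sine laws relating the $l_{ij}$ and the $\alpha_{ij}$ and then verify the resulting trigonometric identity directly; this looks like a forbidding computation, and it is precisely the surface-theoretic viewpoint that renders the identity transparent.
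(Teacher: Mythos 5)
Your proposal is correct and coincides with the paper's own argument: the corollary is deduced from Theorem \ref{TheoremProjectiveEquivalence} solely by noting that the projective transformation matching $\Omega(T)$ with $\Pi(T)$ preserves cross-ratios, applied to the second through fifth entries of the two configurations. The paper states this in one line, and your additional remarks (distinctness of the four points, the burden lying entirely in Theorem \ref{TheoremProjectiveEquivalence}) are consistent with it.
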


We discovered Theorem \ref{TheoremProjectiveEquivalence} in an attempt to understand a formula of Cho and Kim for volume of a non-Euclidean tetrahedron (see \cite{ChK99}) from a motivic perspective. For an elementary proof of Corollary \ref{TheoremCrossRatioIdentity} see an answer by Petrov in the discussion {\it``A curious relation between angles and lengths of edges of a tetrahedron''} on \url{https://mathoverflow.net/q/336464}. It is of interest to understand the geometric meaning of the coefficients of the projective transformation sending  $\Omega(T)$  to $\Pi(T).$

The correspondence between the dihedral angles and the lengths of edges of a tetrahedron has a hidden symmetry called {\it Regge symmetry}, discovered by Ponzano and Regge in the Euclidean case, see \cite[Appendix D]{PR68}.

\begin{theorem} \label{TheoremReggeSymmetry} Let $T$ be a tetrahedron.
 Suppose that there exists a tetrahedron $T'$ in the same space with edge lengths $l_{ij}'$  for $1\leq i<j\leq4$ such that
\begin{align*}
&l_{12}'=l_{12}, \ \  \
 l_{13}'=\frac{l_{14}+l_{23}+l_{24}-l_{13}}{2}, \ \ \
 l_{14}'=\frac{l_{13}+l_{23}+l_{24}-l_{14}}{2},\\
&l_{34}'=l_{34}, \ \ \
 l_{24}'=\frac{l_{13}+l_{14}+l_{23}-l_{24}}{2}, \ \ \
 l_{23}'=\frac{l_{13}+l_{14}+l_{24}-l_{23}}{2}.
\end{align*}
Then the  corresponding dihedral angles $\alpha_{ij}'$ of $T'$ satisfy
\begin{align*}
& \alpha_{12}'=\alpha_{12},\ \  
  \alpha_{13}'=\frac{\alpha_{14}+\alpha_{23}+\alpha_{24}-\alpha_{13}}{2},\ \
  \alpha_{14}'=\frac{\alpha_{13}+\alpha_{23}+\alpha_{24}-\alpha_{14}}{2},\\
& \alpha_{34}'=\alpha_{34},\ \
  \alpha_{24}'=\frac{\alpha_{13}+\alpha_{14}+\alpha_{23}-\alpha_{24}}{2},\ \
  \alpha_{23}'=\frac{\alpha_{13}+\alpha_{14}+\alpha_{24}-\alpha_{23}}{2}.
\end{align*}
Moreover, the volumes of the tetrahedra $T$ and $T'$ coincide.
\end{theorem}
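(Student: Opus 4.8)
The plan is to separate the claim into the transformation rule for the dihedral angles and the equality of volumes, obtaining the second from the first through the Schl\"afli differential formula; the substantive part is the angle rule, and I would prove it by transporting the (given) length rule across the projective equivalence of Theorem~\ref{TheoremProjectiveEquivalence}.

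First I would compute the effect of the Regge substitution on the two eight-point configurations. On edge lengths it fixes $l_{12},l_{34}$ and sends the remaining four to $s-l_{ij}$, where $2s=l_{13}+l_{14}+l_{23}+l_{24}$, so that $s$, and hence $\Pi_{1324}=l_{13}+l_{14}+l_{23}+l_{24}$, is invariant. A direct check then gives $\Pi_{123}'=\Pi_{124}$, $\Pi_{134}'=\Pi_{234}$, $\Pi_{1234}'=\Pi_{1243}$ (and their reverses), while $\Pi_{1324}'=\Pi_{1324}$ and the leading entry ($1$ spherically and hyperbolically, $0$ Euclideanly) is fixed. Thus in every geometry $\Pi(T')$ is $\Pi(T)$ with its coordinates reordered by the involution $\pi$ that swaps the places $\{123,124\}$, $\{134,234\}$, $\{1234,1243\}$ and fixes the other two. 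The same computation with $l$ replaced by $\alpha$ --- the constant $-\pi$ in each $\Omega_{ijk}$ cancels --- shows that the angle rule of the theorem, \emph{if} true, reorders $\Omega(T)$ by this same $\pi$. So it suffices to prove $\Omega(T')=\pi\cdot\Omega(T)$ as ordered configurations in $\mathbb{P}^1$.

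Applying Theorem~\ref{TheoremProjectiveEquivalence} to $T$ and to $T'$ (a genuine tetrahedron, by hypothesis) yields $g,g'\in\mathrm{PGL}_2(\mathbb{C})$ with $g(\Pi(T))=\Omega(T)$ and $g'(\Pi(T'))=\Omega(T')$ coordinatewise; combined with the previous paragraph this gives $\Omega(T')=\rho\,(\pi\cdot\Omega(T))$ for $\rho=g'\circ g^{-1}$, and the crux is to show $\rho=\mathrm{id}$. The cheap constraints ($\rho$ fixes $1$ and preserves the circle or line through the eight points) do not suffice; here one must use that the equivalence in Theorem~\ref{TheoremProjectiveEquivalence} is not accidental but is induced by the period map of the rational elliptic surface $X$ attached to the tetrahedron, with the substitution $\pi$ realized by an element $w\in W(D_6)$ acting on the marking of $X$ and leaving $X$ itself --- hence its period map --- unchanged; since $T$ and $T'$ correspond to $X$ with markings related by $w$, the maps $g$ and $g'$ differ precisely by $w$ and $\rho=\mathrm{id}$. (Alternatively, $\rho$ is a $\mathrm{PGL}_2$-valued function on the connected moduli of tetrahedra admitting a Regge partner, it is locally constant because the matching $\Omega(T')=\rho\,(\pi\cdot\Omega(T))$ of eight moving points is rigid, and it is the identity near the Euclidean limit, where the angle symmetry is the classical Ponzano--Regge theorem.) Granting $\rho=\mathrm{id}$, the coordinates of $\Omega(T')=\pi\cdot\Omega(T)$ read $\Omega_{123}(T')=\Omega_{124}(T)$ and its five companions in $\R/2\pi\mathbb{Z}$; since the linear forms $\Omega_{123},\Omega_{124},\Omega_{134},\Omega_{234},\Omega_{1234},\Omega_{1324}$ are linearly independent in the six variables $\alpha_{ij}$, this fixes all of $\alpha_{ij}'$ up to a discrete ambiguity, the relevant value being exactly the Regge combination; as the dihedral angles depend continuously on $T$ and the ambiguity is locally constant, it equals that combination identically, as one checks in the small-tetrahedron limit.

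For the volumes I would invoke the Schl\"afli formula $dV=\varepsilon\,\tfrac12\sum_{i<j}l_{ij}\,d\alpha_{ij}$, with $\varepsilon=\pm1$ according to curvature. Since $(l_{12},\alpha_{12})$ and $(l_{34},\alpha_{34})$ are Regge-fixed while the other four pairs transform as $(l_{ij},\alpha_{ij})\mapsto(s-l_{ij},\,t-\alpha_{ij})$ with $2s=\sum'l_{ij}$ and $2t=\sum'\alpha_{ij}$ Regge-invariant, a two-line computation gives $\sum_{i<j}l_{ij}'\,d\alpha_{ij}'=\sum_{i<j}l_{ij}\,d\alpha_{ij}$, hence $d\bigl(V(T')-V(T)\bigr)=0$; therefore $V(T')-V(T)$ is constant along the path that scales $T$ (and simultaneously its Regge partner) toward the flat limit, where both volumes vanish, so $V(T')=V(T)$. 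The one genuinely hard step is $\rho=\mathrm{id}$ in the third paragraph --- exactly where the elliptic-surface machinery is indispensable; the bookkeeping of the second paragraph and the Schl\"afli computation here are routine.
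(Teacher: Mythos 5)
Your proposal is correct in outline and ultimately rests on the same mechanism as the paper, but it reaches it by a different and somewhat longer route, so a comparison is worth recording. The paper deduces the angle rule in one step from Theorem~\ref{TheoremHyperbolic} via the $W(\textup{D}_6)$-equivariance and injectivity of the correspondence in Theorem~\ref{TheoremCorrespondence}: the Regge substitution is realized by a Weyl element $w$ (the reflection in $\frac{e_{13}+e_{14}+e_{23}+e_{24}}{2}$, composed with sign changes on $e_{13},e_{14},e_{23},e_{24}$ --- a wrinkle both you and the paper's introduction elide) acting only on the marking of the fixed surface $X_T$; since $\textup{Res}_{F_1}$ and $\textup{Res}_{F_2}$ are homomorphisms taking the values $e^{2l_{ij}}$ and $e^{2i(\pi-\alpha_{ij})}$ on the roots $e_{ij}$, precomposing with $w$ transforms lengths and angles by the \emph{same} linear substitution, and uniqueness (Proposition~\ref{PropositionModuliSurfaces}, i.e.\ the Torelli theorem) identifies the re-marked surface with $X_{T'}$. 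You instead pass through Theorem~\ref{TheoremProjectiveEquivalence}, reduce the angle rule to the coincidence $g'=g$ of the two matching projective transformations, and only then invoke the surface machinery; your justification of $\rho=\mathrm{id}$ (``$g$ and $g'$ differ precisely by $w$'') is really the paper's equivariance-plus-Torelli argument stated informally, so nothing is gained by the detour, and it costs you something: the configurations record only the seven sums $\Omega_{\bullet}$ modulo $2\pi$, so you must invert a linear system and then dispose of a discrete ambiguity by a connectedness-and-flat-limit argument that the direct root-by-root computation avoids (the paper still faces a residual mod-$\pi$ ambiguity, which it also leaves implicit). Where your proposal genuinely adds value is the volume statement: the paper does not prove it at all (it defers to \cite{AI19}), whereas your Schl\"afli computation $\sum_{i<j}l'_{ij}\,d\alpha'_{ij}=\sum_{i<j}l_{ij}\,d\alpha_{ij}$ is correct and standard; the one point to secure there is that the degeneration to the flat limit can be carried out inside the locus of tetrahedra admitting a Regge partner, so that the constant of integration can be evaluated.
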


A geometric proof of Theorem \ref{TheoremReggeSymmetry} in the non-Euclidean case was found by Akopyan and Izmestiev, see \cite[Theorem 1]{AI19}. It was noticed in \cite{DL03} that Regge symmetry is a part of a bigger group of order $23040,$ which is isomorphic to the Weyl group $W(D_6).$ 

Our initial goal was to find a conceptual explanation for Theorems \ref{TheoremProjectiveEquivalence} and \ref{TheoremReggeSymmetry}. Our main result is a construction of a correspondence between tetrahedra and certain complex projective surfaces. The cross-ratios from Theorem \ref{TheoremCrossRatioIdentity} are equal to the classical invariants of the surfaces, called cross-ratios of type $D_4$ in \cite[\S 3]{Nar80}. Regge symmetry is manifested in the Weyl group action on the Picard lattice of the surface. 

By a rational elliptic surface $X$ we mean a smooth projective surface over $\mathbb{C},$ which can be obtained as a blow up of $\mathbb{P}^2$ at nine points of intersection of a pair of elliptic curves. The anti-canonical linear system $|-K_X|$ defines a map $X\lra \mathbb{P}^1$ with generic fiber of genus $1.$ A fiber $F$ of the elliptic fibration on $X$ is said to have type $I_2$ if it is isomorphic to a union of two rational curves intersecting transversally at a pair of points. The group $\textup{Pic}^0(F)$ can be identified  with $\mathbb{C}^{\times}.$ Consider a divisor $D,$ which is orthogonal (with respect to the intersection pairing on $\textup{Pic}(X))$ to each irreducible component of $F.$ We denote by 
\[
\textup{Res}_F(D)\in \textup{Pic}^0(F)\cong \mathbb{C}^{\times}
\]
the restriction of $D$ to $F.$ The map $\textup{Res}_F$ gives a natural way to parametrize rational elliptic surfaces, see \cite[Appendix by E. Looijenga]{Nar82}, we call it a {\it period map}.

\begin{theorem}\label{TheoremHyperbolic}
For a generic non-Euclidean tetrahedron  (see Definition \ref{DefinitionGeneric}) there exists a rational elliptic surface $X_T$ with a pair of $I_2-$fibers $F_1$ and $F_2$ and a collection of six classes $e_{ij}\in \textup{Pic}\left(X_T\right)$  orthogonal to $F_1$ and $F_2$ such that for $1\leq i <j\leq 4$ we have
\be \label{FormulaResidue}
\begin{split}
	&\textup{Res}_{F_1}(e_{ij})=
\begin{cases}
	e^{2il_{ij}}& \text{if $T$ is spherical},\\
	e^{2l_{ij}}&\text{if $T$ is hyperbolic},
\end{cases}\\
&\textup{Res}_{F_2}(e_{ij})=e^{2 i(\pi- \alpha_{ij})}.
\end{split}
\ee
\end{theorem}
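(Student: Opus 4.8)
The plan is to build the surface $X_T$ directly from explicit plane-geometric data attached to the tetrahedron, and then verify the residue formula by computing restrictions of divisor classes to the two $I_2$-fibers. First I would recall the classical dictionary (as in Looijenga's appendix to \cite{Nar82}) between rational elliptic surfaces with a chosen pair of disjoint $I_2$-fibers and configurations of six points on a conic, or equivalently on $\mathbb{P}^1\times\mathbb{P}^1$: blowing up $\mathbb{P}^1\times\mathbb{P}^1$ (which is already a conic bundle in two ways, each with a section square $0$) at six points $P_{ij}$, $1\le i<j\le 4$, lying on a $(2,2)$-curve, one obtains a rational elliptic surface whose anticanonical pencil is the pencil of $(2,2)$-curves through the six points; the two rulings degenerate into the two $I_2$-fibers $F_1,F_2$, and the six exceptional classes $e_{ij}$ are automatically orthogonal to every component of $F_1$ and $F_2$. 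Under this construction, $\operatorname{Res}_{F_1}(e_{ij})$ and $\operatorname{Res}_{F_2}(e_{ij})$ are read off from the two coordinates of the point $P_{ij}$ on $\mathbb{P}^1\times\mathbb{P}^1$, up to a common $\mathrm{PGL}_2$ in each factor.

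So the real content is to produce the six points with the prescribed coordinates. For the second coordinate (the dihedral angles), I would use the Gram matrix of the tetrahedron: the four outer-normal vectors $n_1,n_2,n_3,n_4$ of the faces, normalized in the appropriate quadratic space ($\mathbb{R}^{3,1}$ for $\mathbb{H}^3$, $\mathbb{R}^4$ for $\mathbb{S}^3$), satisfy $\langle n_i,n_j\rangle=-\cos\alpha_{ij}$, and the four lines $\mathbb{C}n_i$ span a pencil of conics / lie on a conic in a way that records $e^{2i(\pi-\alpha_{ij})}$ as a cross-ratio-type coordinate — concretely, sending each $n_i$ to the pair of isotropic lines it determines gives four points on the quadric, and the $(2,2)$-curve through the six pairwise data is the one whose two ruling-coordinates realize the angle and length exponents. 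Dually, for the first coordinate one uses the vertices $A_1,A_2,A_3,A_4$ with $\langle A_i,A_j\rangle=\pm\cosh l_{ij}$ (resp. $\cos l_{ij}$), i.e. the edge-length Gram matrix, which is $-1$ times the inverse-type relation to the angle Gram matrix up to the mixed scalars — this duality between the two Gram matrices is exactly what forces the six points to lie on a common $(2,2)$-curve rather than on two unrelated ones. I would then identify $\operatorname{Pic}^0(F_1)\cong\mathbb{C}^\times$ with the multiplicative parameter on the first ruling and check that $e_{ij}\mapsto e^{2l_{ij}}$ (resp. $e^{2il_{ij}}$), and symmetrically for $F_2$, the shift by $\pi$ coming from the normalization choice $\langle n_i,n_j\rangle=-\cos\alpha_{ij}$ (outer vs. inner normals); the genericity hypothesis of Definition \ref{DefinitionGeneric} is what guarantees the six points are distinct, lie on a unique smooth $(2,2)$-curve, and that the two rulings degenerate to honest $I_2$ (not $I_1$ or worse) fibers, so that the blow-up is a genuine rational elliptic surface with the claimed configuration.

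The main obstacle, I expect, is the bookkeeping that matches the \emph{algebraic} cross-ratio coordinate on $\operatorname{Pic}^0(F_k)\cong\mathbb{C}^\times$ with the \emph{transcendental} exponentials $e^{2l_{ij}}$ and $e^{2i(\pi-\alpha_{ij})}$ — in other words, pinning down the precise normalization of the period map $\operatorname{Res}_{F_k}$ and showing the two Gram matrices feed into the two rulings with exactly the factor of $2$ and the additive shift by $\pi$ that appear in \eqref{FormulaResidue}. This amounts to a careful comparison of three things: the spherical/hyperbolic law of cosines (which expresses $\cos\alpha_{ij}$ or $\cosh l_{ij}$ as entries of inverse Gram matrices), the parametrization of a point on a conic by a $\mathbb{C}^\times$-valued coordinate, and the identification $\operatorname{Pic}^0(I_2)\cong\mathbb{C}^\times$. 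Once the spherical case is done, the hyperbolic case follows by analytic continuation $l_{ij}\mapsto i l_{ij}$ (spherical-to-hyperbolic duality), and the Euclidean case is the degenerate limit where one $I_2$-fiber collapses the multiplicative $\mathbb{C}^\times$ to the additive $\mathbb{C}$ — consistent with the $0$ and linear (rather than $1$ and exponential) entries in the definition of $\Pi(T)$.
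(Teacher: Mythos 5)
Your construction as written does not produce a rational elliptic surface, and the classes you propose cannot play the role of the $e_{ij}$. Blowing up $\mathbb{P}^1\times\mathbb{P}^1$ at six points yields a surface with Picard rank $8$ and $K^2=2$, and the $(2,2)$-curves through six general points form a net, not a pencil; to get a rational elliptic surface one must blow up the \emph{eight} base points of a pencil of $(2,2)$-curves. This is what the paper does: $X_T$ is (after an intermediate blow-up of $Q$ at the twelve points $E_{ij}$ and a blow-down of four $(-1)$-curves) the blow-up of $\mathbb{P}^1\times\mathbb{P}^1$ at the eight points $u_{ij}$ lying on a pair of \emph{reducible} $(2,2)$-curves, and the two $I_2$-fibers are the strict transforms of $(H_1\cup H_2)\cap Q$ and $(H_3\cup H_4)\cap Q$, i.e.\ unions of $(1,1)$-curves each through four of the eight points --- they are not degenerations of the two rulings, which are not anticanonical. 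Moreover, an exceptional divisor over a base point of the anticanonical pencil meets every fiber, in particular one component of each of $F_1$ and $F_2$, so such classes are \emph{not} orthogonal to $F_1$ and $F_2$; in the paper the relevant classes are square $(-2)$ classes in $f^{\perp}$ (differences of sections, or classes such as $l-u_{23}-u_{42}$), whose residues are computed by the cross-ratio formulas (3.1)--(3.2).

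The more serious gap is that the actual content of the theorem --- that a \emph{single} surface carries the edge lengths on $F_1$ and the dihedral angles on $F_2$, with exactly the stated exponentials --- is precisely the step you label ``the main obstacle'' and never carry out. One cannot prescribe $\textup{Res}_{F_1}$ and $\textup{Res}_{F_2}$ independently: by the Torelli theorem the surface is already determined by $\textup{Res}_{F_1}$, so the appearance of the angle data on the second fiber is a nontrivial compatibility, and your assertion that ``duality of the two Gram matrices forces the six points onto a common $(2,2)$-curve'' is a restatement of what must be proved, not an argument for it. In the paper this verification occupies \S 4.3--4.4: the length and angle functions are defined as cross-ratios on the double cover $\widetilde{Q}$ (with a Menelaus-type statement, Lemma 2.4, fixing the signs), and Lemmas 4.3--4.4 and 4.7--4.8 check $\textup{Res}_{F_1}=\textup{L}_T$ and $\textup{Res}_{F_2}=\textup{A}_T$ root by root, the angle case requiring the polar duality with respect to $Q$; the identification of these cross-ratios with $e^{2l_{ij}}$, $e^{2il_{ij}}$, $e^{2i(\pi-\alpha_{ij})}$ comes from the Klein model and must itself be justified. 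Finally, ``analytic continuation $l_{ij}\mapsto il_{ij}$'' is not by itself a proof of the hyperbolic case. So the blueprint gestures in a reasonable direction, but both the construction as stated and the key verification are missing or incorrect.
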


\begin{remark}One can show that Theorem \ref{TheoremHyperbolic} is true without an extra assumption that $T$ is generic. Moreover, we expect that Theorem \ref{TheoremHyperbolic} can be generalized to the case of a Euclidean tetrahedron. The corresponding surface $X_T$  has a fiber $F_1$ of type $III$ and a fiber $F_2$ of type $I_2.$ Then $\textup{Pic}^0(F_1)\cong\C$ and we have
\begin{align*}
	&\textup{Res}_{F_1}(e_{ij})=l_{ij},\\
	&\textup{Res}_{F_2}(e_{ij})=e^{2 i(\pi-\alpha_{ij})}.
\end{align*}
\end{remark}

First we explain that Theorem \ref{TheoremReggeSymmetry} follows from Theorem \ref{TheoremHyperbolic}. The lattice $\textup{Pic}\left(X_T\right)$ has rank $10;$ the orthogonal complement to the canonical class $K_X \in \textup{Pic}\left(X_T\right)$ is an affine root lattice of type $E_8^1.$ The orthogonal complement to all components of the fibers $F_1$ and $F_2$ contains the null-vector $K_{X_T}$ and its quotient by $K_{X_T}$ is a root lattice of type $D_6,$ so the Weyl group $W(D_6)$ acts on it. Regge symmetry is an action on this quotient by a particular element of the Weyl group, namely the reflection with respect to the plane perpendicular to the root $\dfrac{e_{13}+e_{14}+e_{23}+e_{24}}{2}$. Since the period maps (\ref{FormulaResidue}) in Theorem \ref{TheoremReggeSymmetry} are linear, Regge symmetry transforms lengths of edges and dihedral angles according to the formulas in Theorem \ref{TheoremReggeSymmetry}.

Theorem \ref{TheoremProjectiveEquivalence} also follows from Theorem \ref{TheoremHyperbolic}. A rational elliptic surface  $X_T$ carries an admissible conic bundle: a map  $b\colon X_T \lra \mathbb{P}^1$ with  generic fiber of genus $0$ such that $b$ sends each irreducible component of  $F_1$ and $F_2$ isomorphically to $\textup{Im}(b)\cong\mathbb{P}^1$.  Map $b$ has eight critical values $p_1,\dots, p_8\in \textup{Im}(b).$ We choose $b$ such that $b^{-1}(p_i)$ intersects each component of $F_1$ and $F_2$ in exactly one point. For a certain choice of a conic bundle the eight points in which $b^{-1}(p_i)$ intersect a component of $F_1$ form a configuration $\Pi(T)$ and the eight points in which $b^{-1}(p_i)$ intersect a component of $F_2$ form a configuration $\Omega(T).$ The map $b$ defines a projective transformation sending $\Pi(T)$ to $\Omega(T).$ This proves Theorem \ref{TheoremProjectiveEquivalence}.

\subsection{Projective tetrahedra and the $E_8$ lattice} \label{SectionIntroductionTetrahedra}
We start with introducing an algebro-geometric avatar of a non-Euclidean tetrahedron. A projective tetrahedron $T=(Q,\mathcal{H})$ is a configuration of an irreducible quadric $Q$ and an ordered set of four planes $\mathcal{H}=\{H_1, H_2, H_3, H_4\}$ in $\mathbb{P}^3$ satisfying a certain non-degeneracy condition, see  Definition \ref{DefinitionProjectiveTetrahedra}. 

Every non-Euclidean tetrahedron defines a projective tetrahedron, see \cite[\S 1.5]{Gon99}. We describe here the hyperbolic case. In  Klein's  model the hyperbolic space $\mathbb{H}^3$ is identified with the interior of the unit ball in $\mathbb{R}^3$ and geodesic subspaces of $\mathbb{H}^3$ are intersections of lines and planes in $\mathbb{R}^3$ with $\mathbb{H}^3.$ We  view $\mathbb{R}^3$ as the set of real points of an affine space inside the complex projective space $\mathbb{P}^3.$ Let $Q$ be a quadric, obtained as a projectivization  of the complexification of the ideal boundary $\partial \mathbb{H}^3.$ Let $H_1, H_2, H_3, H_4$ be the projectivizations  of the complexifications of the faces of the tetrahedron. In this way a geodesic tetrahedron in $\mathbb{H}^3$ determines the projective tetrahedron $(Q,\{H_1,H_2,H_3,H_4\}).$ A {\it marking} of a projective tetrahedron is a choice of a family of lines on $Q\cong \mathbb{P}^1\times \mathbb{P}^1$ and ordering of points $\{x, y\}=Q\cap (A_iA_j).$ We denote the first point in $Q\cap (A_iA_j)$ by $E_{ij}$ and the second by $E_{ji}.$ 

A projective tetrahedron obtained from a non-Euclidean tetrahedron admits a canonical marking. In the hyperbolic case we have
\[
\log \left ( [A_i, x, A_j, y] \right )=\pm 2l_{ij}
\]
and we choose the ordering of $\{x,y\}$ such that $\log \left ( [x, A_i, y, A_j] \right )>0.$
In the spherical case
\[
\log \left ( [A_i, x, A_j,y] \right )=\pm 2 i \alpha_{ij}
\]
and we choose the  ordering of $\{x,y\}$ such that $\frac{1}{2}\log \left ( [x, A_i, y, A_j] \right )\in (0,\pi).$

A non-Euclidean tetrahedron is uniquely determined by its edge lengths. One might expect that a marked projective tetrahedron is uniquely determined by the quantities 
\begin{align} \label{FormulaEdgeLength}
	[A_i, E_{ij}, A_j, E_{ji}]\in \C^{\times}
\end{align}
for $1\leq i<j\leq 4,$ but this is not the case. The reason is that quantities like $e^{\Pi_{123}}$ appearing in $\Pi(T)$ are well defined, but are not rational functions of expressions like (\ref{FormulaEdgeLength}).

Here the lattice $\textup{Q}\bigl(\textup{E}_8\bigr)$ of a root system of type $E_8$  comes into play. It is known that it contains a set of eight pairwise orthogonal roots, which we label by subsets of a set $I=\{1,2,3,4\}$ of even cardinality. The stabilizer of a set of eight orthogonal roots acts $3-$transitively on them. After fixing  roots $e_{\varnothing}$ and $e_{I}$ the remaining six could be labelled by $2-$subsets of the set $\{1,2,3,4\}.$ The stabilizer of the set of eight roots, $e_{\varnothing}$ and  $e_{I}$ acts as $\mathbb{S}_4$ on this set. We call the remaining roots  $e_{12}, e_{13}, e_{14}, e_{23}, e_{24}, e_{34}.$ 

The collection of roots orthogonal to $e_{I}$ spans a lattice of type $E_7,$ which we denote by $\textup{Q}\bigl(\textup{E}_7^\textup{L}\bigr)$. In \S \ref{SectionEdgeLengths} we construct a homomorphism
\[
\textup{L}_T  \colon \textup{Q}\bigl(\textup{E}_7^\textup{L}\bigr) \lra \C^{\times},
\]
such that $\textup{L}_T(e_{\varnothing})=1$ and 
\[
\textup{L}_T(e_{ij})=[ A_i, E_{ij}, A_j, E_{ji}]
\] 
for $1\leq i<j\leq 4.$ If $T$ is obtained from a non-Euclidean tetrahedron, the coordinates of the vector $\Pi(T)$  are values of the map $\textup{L}_T$ on certain roots in $\textup{E}_7^\textup{L}.$ We call $T$ {\it generic} if the only roots $r\in \textup{R}\bigl(\textup{E}_7^\textup{L}\bigr)$ for which  $\textup{L}_T(r)=1$ equal to $\pm e_\varnothing.$

Similarly, denote by $\textup{Q}\bigl(\textup{E}_7^\textup{A}\bigr)$ an orthogonal complement to the root $e_I$ in $\textup{Q}\bigl(\textup{E}_8\bigr)$. In \S  \ref{SectionEdgeLengths} we define an angle function
\[
\textup{A}_T \colon \textup{Q}\bigl(\textup{E}_7^\textup{A}\bigr) \lra \C^{\times}.
\]
This function is closely related to the length function of the dual tetrahedron $T^{\vee}.$  If  $T$ is obtained from a non-Euclidean tetrahedron we have $\textup{A}_T(e_{\varnothing})=1$ and
\[
\textup{A}_T(e_{ij})=e^{2i(\pi-\alpha_{ij})} \text{ for } 1\leq i<j\leq 4.
\] 
Similarly to $\Omega(T)$ and $\Pi(T)$ we define configurations 
\begin{align*}
&\textup{A}(T)=\left(1,\textup{A}_{123}, \textup{A}_{124}, \textup{A}_{134}, \textup{A}_{234}, \textup{A}_{1234}, \textup{A}_{1324}, \textup{A}_{1423}\right),\\
&\\
&\textup{L}(T)=\left(1,\textup{L}_{123}, \textup{L}_{124}, \textup{L}_{134}, \textup{L}_{234}, \textup{L}_{1234}, \textup{L}_{1324}, \textup{L}_{1423}\right). 
\end{align*}
of eight points in $\mathbb{P}^1=\C\cup\{\infty\},$ see \S \ref{SectionEdgeLengths}. 

\begin{definition}\label{DefinitionChoKim} Let $T$ be a marked projective tetrahedron. The following two rational functions
\begin{align*}
&\textup{CK}^{\textup{L}}_T(t)=\frac{(t-\textup{L}_{123})(t-\textup{L}_{124})(t-\textup{L}_{134})(t-\textup{L}_{234})}{(t-1)(t-\textup{L}_{1234})(t-\textup{L}_{1324})(t-\textup{L}_{1243})},\\ \\
&\textup{CK}^{\textup{A}}_{T}(t)=\frac{(t-\textup{A}_{123})(t-\textup{A}_{124})(t-\textup{A}_{134})(t-\textup{A}_{234})}{(t-1)(t-\textup{A}_{1234})(t-\textup{A}_{1324})(t-\textup{A}_{1243})}\\
\end{align*}
are called the  {\it Cho-Kim function of $T$} and {\it the dual Cho-Kim function of $T$} respectively. 
\end{definition}

It is easy to see that $\textup{CK}^{\textup{L}}_T(0)=\textup{CK}^{\textup{L}}_T(\infty)=1.$
There exist two more points $p_1, p_2 \in \mathbb{P}^1$ such that $\textup{CK}^{\textup{L}}_T(p_1)=\textup{CK}^{\textup{L}}_T(p_2)=1.$  These numbers are called the {\it principal parameters} of $T.$  The Cho-Kim function first appeared in a surprising formula for the volume of a hyperbolic tetrahedron in \cite{MM05}.

Our main result about trigonometry of a projective tetrahedron is the following theorem, which immediately implies Theorem \ref{TheoremProjectiveEquivalence}.

\begin{theorem} \label{TheoremProjectiveTetrahedra}
There exists a  unique fractional linear transformation $\psi \in \textup{PSL}_2(\C)$ such that 
\[
\textup{CK}^{\textup{L}}_{T}(t)=\textup{CK}^{\textup{A}}_{T}\left(\psi(t)\right).
\]
For a certain order of the principal parameters $p_1$ and $p_2$ we have
\[
\psi(t)=\frac{(t-p_1)(1-p_2)}{(t-p_2)(1-p_1)}.
\]
In particular, the configurations $\textup{L}(T)$ and $\textup{A}(T)$ are projectively equivalent.
\end{theorem}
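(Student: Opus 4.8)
The plan is to deduce Theorem \ref{TheoremProjectiveTetrahedra} from the structure of the rational elliptic surface $X_T$ attached to $T$ in (the projective-tetrahedron analogue of) Theorem \ref{TheoremHyperbolic}, together with the period-map formalism. First I would set up the surface: starting from a generic marked projective tetrahedron $T=(Q,\mathcal H)$, blow up $\mathbb{P}^3$ or rather produce $X_T$ as a rational elliptic surface whose Picard lattice carries the orthogonal roots $e_S$ indexed by even subsets $S\subseteq I$ described in \S\ref{SectionIntroductionTetrahedra}; the two $I_2$-fibers $F_1,F_2$ are the ones whose component classes are, respectively, governed by $e_\varnothing,e_I$ and the length/angle data. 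The key input is that the residue (period) maps $\textup{Res}_{F_1},\textup{Res}_{F_2}\colon \textup{Pic}(X_T)^{\perp F_i}\to \mathbb{C}^\times$ are group homomorphisms that agree up to sign with $\textup{L}_T$ and $\textup{A}_T$ on the roots $e_{ij}$ — this identifies $\textup{L}(T)$ and $\textup{A}(T)$ with the restrictions to $F_1$ and $F_2$ of one and the same eight classes $e_S$.

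Next I would introduce the conic bundle $b\colon X_T\to \mathbb{P}^1$ from \S\ref{S1.1}: the admissible conic bundle sending each component of $F_1$ and of $F_2$ isomorphically onto $\mathbb{P}^1$. The eight critical values $p_1,\dots,p_8$ of $b$ pull back to the eight singular conics, and $b^{-1}(p_k)$ meets a fixed component $C_1\subset F_1$ in one point $x_k$ and a fixed component $C_2\subset F_2$ in one point $y_k$. Restricting $b$ to $C_1\cong\mathbb{P}^1$ and to $C_2\cong\mathbb{P}^1$ gives two isomorphisms $b|_{C_1},b|_{C_2}$ to $\textup{Im}(b)\cong\mathbb{P}^1$; then $\psi:=(b|_{C_2})^{-1}\circ(b|_{C_1})$ is the desired fractional linear transformation, and by construction $\psi$ carries the eight points $\{x_k\}$ (which after normalization form $\textup{L}(T)$, using that $\textup{Res}_{F_1}(e_S)$ records exactly the position of $x_k$ on $C_1\cong\textup{Pic}^0(F_1)$) to the eight points $\{y_k\}$ (forming $\textup{A}(T)$). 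This establishes both the existence of $\psi$ and the projective equivalence of $\textup{L}(T)$ and $\textup{A}(T)$. The matching of the numerator/denominator \emph{partition} of the eight points — i.e. that $\psi$ respects the grouping into $\{1,\textup{L}_{1234},\textup{L}_{1324},\textup{L}_{1243}\}$ versus $\{\textup{L}_{123},\dots,\textup{L}_{234}\}$ — follows because this partition is intrinsic: it is the partition of the eight orthogonal roots $e_S$ according to whether $|S|\equiv 0$ or $2 \pmod 4$ after fixing $e_\varnothing,e_I$, hence is preserved by any map induced from an isomorphism of the underlying configurations of roots. Consequently $\textup{CK}^{\textup{L}}_T(t)=\textup{CK}^{\textup{A}}_T(\psi(t))$ as rational functions, since both sides are the unique (up to the $\textup{PSL}_2$ ambiguity) degree-$(4,4)$ rational function with those zeros and poles and value $1$ at $t=0,\infty$.

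For uniqueness of $\psi$ and the explicit formula, I would argue as follows. Any $\psi'$ with $\textup{CK}^{\textup{L}}_T=\textup{CK}^{\textup{A}}_T\circ\psi'$ must send the zero set $\{\textup{L}_{123},\dots\}$ to $\{\textup{A}_{123},\dots\}$ and the pole set to the pole set; since (by genericity, using the definition of a generic tetrahedron) the eight points are distinct and in sufficiently general position, and since $\psi'$ must additionally fix the relation $\textup{CK}^{\textup{L}}_T(0)=\textup{CK}^{\textup{L}}_T(\infty)=1$ together with the two extra solutions $p_1,p_2$ of $\textup{CK}^{\textup{L}}_T(t)=1$, the map $\psi'$ is pinned down on the four-point set $\{0,\infty,p_1,p_2\}$: it must send $\{0,\infty,p_1,p_2\}$ to the corresponding four-point set for $\textup{CK}^{\textup{A}}_T$, which (by the analogous statement $\textup{CK}^{\textup{A}}_T(0)=\textup{CK}^{\textup{A}}_T(\infty)=1$ and the fact that $\psi$ must match principal parameters with principal parameters) is $\{0,\infty,\psi(p_1),\psi(p_2)\}$. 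A fractional linear transformation is determined by its values at three points, so this forces $\psi'=\psi$; the displayed formula $\psi(t)=\frac{(t-p_1)(1-p_2)}{(t-p_2)(1-p_1)}$ is then simply the unique element of $\textup{PSL}_2(\mathbb{C})$ with $\psi(p_1)=0$, $\psi(p_2)=\infty$, $\psi(1)=1$, once the principal parameters $p_1,p_2$ are ordered so that $\psi(0)=\infty$ as well — and one checks this ordering is the one for which $\psi$ sends the $\textup{L}$-pole $1$ to the $\textup{A}$-pole $1$.

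The main obstacle I anticipate is \emph{not} the conic-bundle bookkeeping but the verification that the period/residue maps really do compute $\textup{L}_T$ and $\textup{A}_T$ on the nose — i.e. that $\textup{Res}_{F_1}(e_{ij})=[A_i,E_{ij},A_j,E_{ji}]$ and $\textup{Res}_{F_2}(e_{ij})=e^{2i(\pi-\alpha_{ij})}$, and that the six classes $e_{ij}$ in $\textup{Pic}(X_T)$ are exactly the images of the abstract roots $e_{ij}\in\textup{Q}(\textup{E}_8)$ under the marking. This requires carefully identifying $\textup{Pic}^0(F_i)\cong\mathbb{C}^\times$ with the torus on which the cross-ratios $[A_i,\cdot,A_j,\cdot]$ live, tracking how the blow-up points (the nine base points, i.e. the intersections $Q\cap(A_iA_j)$ and the vertices) sit on the $I_2$-fibers, and checking the normalization constants (the factor of $2$, the shift by $\pi$) are consistent across $F_1$ and $F_2$. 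Once that dictionary is in place, Theorem \ref{TheoremProjectiveTetrahedra} is a formal consequence of the existence of the single conic bundle $b$ restricting isomorphically to the relevant components of both $I_2$-fibers.
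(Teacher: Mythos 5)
Your overall route is the paper's own: reduce to the tetrahedron--surface correspondence (Theorem \ref{TheoremCorrespondence}), take the admissible conic bundle $b$ on $X_T$, observe that $\textup{CK}^{\textup{L}}_T$ and $\textup{CK}^{\textup{A}}_T$ are one and the same cross-ratio function $C_{X,b}$ on the base of $b$ written in the two normalized coordinates $t_1,t_2$ attached to the nodes of $F_1$ and $F_2$ (Lemma \ref{LemmaConicBundleFunction} and Corollary \ref{CorollaryConicFunction}), so that $\psi$ is the coordinate change $t_2=\psi(t_1)$, pinned down by $\psi(1)=1$ (both coordinates equal $1$ at $p_8$) and by the fact that the $t_1$-coordinates of $b(F_2^x),b(F_2^y)$ are exactly the two principal parameters, which $\psi$ sends to $0$ and $\infty$. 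Deferring the dictionary $\textup{L}_T=\textup{Res}_{F_1}$, $\textup{A}_T=\textup{Res}_{F_2}$ and the identification of the classes $[b_{p_i}^1-b_{p_8}^1]$, $[b_{p_i}^2-b_{p_8}^2]$ with specific half-sum roots is legitimate for this particular statement, since the paper likewise quotes Theorem \ref{TheoremCorrespondence} and the computations of \S\ref{SectionConicBundleGeneric} at this point.

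Two concrete weaknesses. First, your uniqueness argument is invalid as written: from $\textup{CK}^{\textup{L}}_T=\textup{CK}^{\textup{A}}_T\circ\psi'$ you only learn that $\psi'$ maps the level set $\{\textup{CK}^{\textup{L}}_T=1\}=\{0,\infty,p_1,p_2\}$ onto the level set $\{\textup{CK}^{\textup{A}}_T=1\}$ \emph{as a set}; this does not prescribe the value of $\psi'$ at any three individual points, so ``a fractional linear transformation is determined by its values at three points'' cannot be applied. What is actually needed is that $\sigma=\psi'\circ\psi^{-1}$ satisfies $\textup{CK}^{\textup{A}}_T\circ\sigma=\textup{CK}^{\textup{A}}_T$ and that, for $T$ generic, the only M\"obius transformation preserving the degree-$4$ function $\textup{CK}^{\textup{A}}_T$ is the identity; note that for the symmetric Example \ref{ExampleSphericalTetrahedron}, where $\textup{CK}^{\textup{A}}_T(t)=(t-i)^4/(t-1)^4$, nontrivial such $\sigma$ do exist, so some genericity hypothesis must genuinely enter the uniqueness argument. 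Second, your justification that $\psi$ respects the numerator/denominator split (the parity of $|S|$ modulo $4$ among the eight orthogonal roots) is both misstated --- the eight configuration points are values of $\textup{L}_T$ at half-sums over affine planes of $S_I$, not at the roots $e_S$ themselves, and the $|S|\bmod 4$ dichotomy separates $\{e_\varnothing,e_I\}$ from the $e_{ij}$, not the zeros from the poles --- and unnecessary: once both Cho--Kim functions are identified with the single function $C_{X,b}$ in the two coordinates, zeros match zeros and poles match poles automatically. The substantive input is the explicit list in \S\ref{SectionConicBundleGeneric} of which residues occur as zeros and which as poles; that should be cited (or re-proved via Lemma \ref{LemmaRelationInPicard}) rather than replaced by a parity count.
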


Theorem \ref{TheoremProjectiveTetrahedra} allows one to {\it solve} a tetrahedron: compute its dihedral angles in terms of edge lengths. 

\begin{example}\label{ExampleSphericalTetrahedron}
	Consider a spherical  tetrahedron with all dihedral angles equal to $\dfrac{\pi}{2}.$ Let $T$ be the corresponding projective tetrahedron. The dual Cho-Kim function of $T$ is equal to
\[
\textup{CK}_T^{\textup{A}}(t)=\frac{\left(t-i\right)^4}{(t-1)^4}
\]
The principal parameters of $T$ are $p_1=1+i$ and $p_2=\dfrac{1+i}{2},$ so 
$
\psi(t)=\frac{t(1-i)-1}{t-(i+1)}.
$
By Theorem \ref{TheoremProjectiveTetrahedra} we have
\[
\textup{CK}_{T}^{\textup{L}}(t)=\frac{(t+i)^4}{(t-1)^4}
\]
and the edge lengths of $T$ equal to $\dfrac{3\pi}{2}.$	
\end{example}

\subsection{Projective tetrahedra and $D_6$-surfaces}\label{S1.4} 

We call a rational elliptic surface $X$ with a pair of $I_2-$fibers $F_1$ and $F_2$ a {\it $D_6$-surface} $(X,F_1,F_2)$. A $D_6$-surface is called {\it generic } if all other fibers are irreducible. Fix an ordering of the components and of the singular points of each fiber $F_1$ and $F_2.$  After that we can identify groups $\textup{Pic}^0(F_1)$ and $\textup{Pic}^0(F_2)$  with $\C^{\times}.$ Lattice $\textup{Pic}(X)$ is isomorphic to $\mathbb{Z}^{10},$ moreover $K_X^2=0.$ It is known that the lattice $K_X^{\perp}/K_X$ is a root lattice of type $E_8.$ Fix an isomorphism $K_X^{\perp}/K_X \cong \textup{Q}(\textup{E}_8)$ sending  the classes of the chosen components of the fibers $F_1$ and $F_2$ to the roots $e_I$ and $e_{\varnothing}$ respectively. We call a choice of this isomorphism together with an ordering of the components and of the singular points of $F_1$ and $F_2$ {\it a marking} of $X.$ The Weyl group $W(\textup{D}_6)$ of order $23040$ is a stabilizer of roots $e_I$ and $e_{\varnothing}$ in $W(\textup{E}_8)$ and thus acts on the set of markings of $X$ by changing the isomorphism $K_X^{\perp}/K_X \cong \textup{Q}\left(\textup{E}_8\right).$ For a marked $D_6$-surface the following period maps are defined:
\begin{align*}
&\textup{Res}_{F_1} \colon  \textup{Q}\bigl(\textup{E}_7^\textup{L}\bigr) \lra \textup{Pic}^{0}(F_1),\\
&\textup{Res}_{F_2} \colon \textup{Q}\bigl(\textup{E}_7^\textup{A}\bigr)   \lra \textup{Pic}^{0}(F_2).
\end{align*}

The main result of our paper is Theorem \ref{TheoremCorrespondence}, the generalization of Theorem  \ref{TheoremHyperbolic} below.

\begin{theorem}\label{TheoremCorrespondence}
	There exists a $W(D_6)$-equivariant one-to-one correspondence $T \longleftrightarrow (X_T,F_1,F_2)$ between generic marked projective tetrahedra 
		and generic marked	$D_6$-surfaces such that $\textup{L}_T=\textup{Res}_{F_1}$ and $\textup{A}_T=\textup{Res}_{F_2}$.
\end{theorem}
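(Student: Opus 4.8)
The plan is to construct the correspondence in both directions and then verify equivariance and compatibility with the period maps. Going from a projective tetrahedron $T=(Q,\mathcal{H})$ to a $D_6$-surface is the geometrically natural direction: the quadric $Q\cong\mathbb{P}^1\times\mathbb{P}^1$ together with the four planes $H_i$ cuts out on $Q$ four curves of bidegree $(1,1)$, and the eight points $E_{ij}, E_{ji}$ (the intersections of the edges $A_iA_j$ with $Q$) are distinguished points on $Q$. First I would blow up $\mathbb{P}^3$ (or work directly on $Q$) to produce a rational surface: the pencil of quadrics, or rather an appropriate pencil of $(1,1)$-curves through a suitable base locus built from the configuration $(Q,\mathcal H)$, should give the elliptic fibration. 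The two degenerate members corresponding to reducible conics — pairs of lines from the two rulings of $Q$ — become the two $I_2$-fibers $F_1$ and $F_2$. The six classes $e_{ij}$ will be read off from the six edge-lines $A_iA_j$, which meet $Q$ in the eight marked points and hence give rise to $(-1)$-curves or to divisor classes orthogonal to both $F_1$ and $F_2$; the markings of $Q$ (choice of ruling, ordering of $E_{ij}$ versus $E_{ji}$) match exactly the combinatorial data of a marking of $X$ (ordering of components and singular points of the $I_2$-fibers).

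Next I would set up the inverse construction. Given a generic marked $D_6$-surface $(X,F_1,F_2)$, I would use the admissible conic bundle $b\colon X\to\mathbb{P}^1$ described in \S\ref{S1.1}: the map $b$ contracts one component of $F_1$ and one of $F_2$ and realizes $X$ as a blow-up of a Hirzebruch surface or of $\mathbb{P}^1\times\mathbb{P}^1$; the quadric $Q$ is recovered as (the image of) the relative anticanonical model of this conic bundle, with its two rulings corresponding to the conic bundle $b$ and the elliptic fibration. The four planes $H_i$ are reconstructed from the six classes $e_{ij}$ via the incidence pattern forced by the $E_8$ combinatorics: the vertices $A_i$ are the points of $Q$ dual to the ruling lines through the relevant marked points, and the non-degeneracy condition of Definition \ref{DefinitionProjectiveTetrahedra} translates into the genericity of the surface (all other fibers irreducible). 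The key point is to check that these two constructions are mutually inverse, which reduces to a bookkeeping computation comparing the intersection lattice generated by the $e_{ij}$, the fiber components, and $K_X$ with the lattice $\textup{Q}(\textup{E}_8)$ under the prescribed marking.

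For the period-map identity $\textup{L}_T=\textup{Res}_{F_1}$ and $\textup{A}_T=\textup{Res}_{F_2}$, I would argue that both sides are homomorphisms $\textup{Q}(\textup{E}_7^{\textup{L}})\to\mathbb{C}^{\times}$ (respectively $\textup{Q}(\textup{E}_7^{\textup{A}})\to\mathbb{C}^{\times}$) and so it suffices to check agreement on a generating set, for which one takes the roots $e_{\varnothing}$ and $e_{ij}$. On $e_{\varnothing}$ both sides are $1$ by construction. On $e_{ij}$ the left side is the cross-ratio $[A_i,E_{ij},A_j,E_{ji}]$ by the defining property of $\textup{L}_T$ recalled in \S\ref{SectionIntroductionTetrahedra}, while the right side $\textup{Res}_{F_1}(e_{ij})$ is, by definition of the residue map, the value in $\textup{Pic}^0(F_1)\cong\mathbb{C}^{\times}$ obtained by restricting the divisor class $e_{ij}$ to the $I_2$-fiber $F_1\cong$ (nodal cubic / cycle of two lines). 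Under the identification of $Q$'s ruling with the components of $F_1$, this restriction is precisely the cross-ratio of the four marked points $A_i,E_{ij},A_j,E_{ji}$ on the relevant ruling line, matching $\textup{L}_T(e_{ij})$; the same argument on $F_2$ gives $\textup{A}_T(e_{ij})=e^{2i(\pi-\alpha_{ij})}=\textup{Res}_{F_2}(e_{ij})$. The $W(D_6)$-equivariance is then essentially formal: both a marking of $T$ and a marking of $X$ are acted on by $W(D_6)$ through the same embedding into $W(\textup{E}_8)$ as the stabilizer of $e_\varnothing$ and $e_I$, and the construction uses only lattice-theoretic data, so it intertwines the two actions.

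The main obstacle I anticipate is the explicit verification that the geometric construction $T\mapsto X_T$ lands among \emph{generic} $D_6$-surfaces and, conversely, that genericity of $X$ (irreducibility of all fibers other than $F_1,F_2$) corresponds exactly to the non-degeneracy condition on $(Q,\mathcal H)$ together with the genericity hypothesis $\textup{L}_T(r)\neq 1$ for roots $r\neq\pm e_\varnothing$. Concretely this means analyzing when an additional member of the relevant pencil degenerates — which happens precisely when some further root of $\textup{E}_7^{\textup{L}}$ (or $\textup{E}_7^{\textup{A}}$) is killed by the period map — and matching this with the tetrahedron's degeneracy loci. Getting this dictionary exactly right, including the boundary cases where $F_1$ or $F_2$ would degenerate further to type $I_3$ or $III$, is the delicate part; the rest of the proof is a (somewhat lengthy but routine) lattice computation and a diagram chase through the two constructions.
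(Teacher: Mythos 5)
Your first step (blow up the quadric along the data of the configuration to produce the elliptic surface with two $I_2$-fibers) is in the spirit of the paper's construction, but two of your reductions hide the actual content of the theorem. The most serious one is the claim that $\textup{L}_T=\textup{Res}_{F_1}$ and $\textup{A}_T=\textup{Res}_{F_2}$ can be checked ``on the generating set $e_{\varnothing}, e_{ij}$.'' These elements do \emph{not} generate $\textup{Q}\bigl(\textup{E}_7^{\textup{L}}\bigr)$: they span only a finite-index sublattice, missing the half-sum roots such as $\tfrac{1}{2}(e_{12}+e_{13}+e_{23}+e_{\varnothing})$. A homomorphism to $\C^{\times}$ is not determined by its values on a finite-index sublattice (there is exactly the square-root/sign ambiguity that the paper stresses in \S\ref{SectionIntroductionTetrahedra} and resolves by defining $\textup{L}_T$ through lifts to the double cover $\widetilde{Q}$). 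Consequently the whole technical heart of the proof is the verification of the identity on the half-sum roots, done in Lemmas \ref{LemmaEquality1}--\ref{LemmaEquality4} via the Menelaus-type Lemma \ref{LemmaProjectiveTriangle}, projections from $E_{12}$, and (for the angle function) polar duality with respect to $Q$; none of this is replaced by anything in your sketch. Your stated identification of $\textup{Res}_{F_1}(e_{ij})$ with ``the cross-ratio of $A_i,E_{ij},A_j,E_{ji}$ on the relevant ruling line'' also cannot be taken literally, since the vertices $A_i$ do not lie on $Q$ at all, and the $I_2$-fibers are strict transforms of the pairs of plane sections $(H_1\cup H_2)\cap Q$, $(H_3\cup H_4)\cap Q$, not of pairs of ruling lines; the paper's construction moreover is not a single blow-up but a blow-up at the twelve points $E_{ij}$ followed by the contraction of four specific $(-1)$-curves, and the independence of that non-canonical choice is itself nontrivial.

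The second gap is the bijectivity. You propose to write down an explicit inverse (recover $Q$ and the planes $H_i$ from the conic bundle) and dismiss the comparison as bookkeeping, but you give no argument for why the assignment $X\mapsto T$ is well defined or inverse to the construction. The paper never builds an inverse: it proves that both $M_{tetr}$ and $M_{surf}$ are unramified double covers of the same base $\mathbb{T}\subset \textup{Hom}\bigl(\textup{Q}\bigl(\textup{E}_7^{\textup{L}}\bigr),\C^{\times}\bigr)$ (Propositions \ref{PropositionModuliTetrahedra} and \ref{PropositionModuliSurfaces}), and the surface side of this requires serious external input --- surjectivity of the period map and the Torelli theorem for anti-canonical pairs from Looijenga, plus the nonvanishing $\det(\textup{Res}_{F_1})\neq 0$ proved through the conic-bundle discriminant computation (Proposition \ref{PropositionDeterminant}). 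Once the constructed map is shown to commute with the two \'etale projections to $\mathbb{T}$ and with the deck transformations, it extends to an isomorphism; that covering-space argument, together with Torelli, is what delivers ``one-to-one,'' and your proposal contains no substitute for it. Finally, your anticipated ``main obstacle'' (matching genericity) is in fact the easy part (Lemma \ref{LemmaGenericSurface}); the delicate parts are exactly the two items above.
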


The construction of the surface $X_T$ from the tetrahedron $T$ consists of two steps. First we blow up the quadric $Q$ at the twelve points \[\bigcup_{1\leq i<j\leq 4} \bigl (H_i \cap H_j \cap Q\bigr)\] and obtain a rational surface $R_T.$ Next, we blow down a certain set of four non-intersecting $(-1)-$curves and obtain a rational elliptic surface $X_T$. The last step is not canonical.  Surprisingly,  different choices of four $(-1)-$curves result in isomorphic rational elliptic surfaces. Unfortunately, we do not have a clear explanation of this fact yet; its proof is based on the Torelli theorem for anti-canonical pairs. The details of the construction are presented in \S~\ref{SectionConstruction}. In the sequel to this paper we will give a  ``motivic'' proof of Theorem~\ref{TheoremCorrespondence} based on an isomorphism of mixed Hodge structures
\begin{align*}
		H^3 \Bigl ( \mathbb{P}^3 \smallsetminus Q, \bigcup_{i=1}^4 H_i\Bigr)\cong
	H^2(X_T\smallsetminus F_1, F_2).
\end{align*}

Theorem~\ref{TheoremProjectiveTetrahedra} is a corollary of Theorem~\ref{TheoremCorrespondence}. For the generic $D_6$-surface $X_T$ there exists  a map
\[
b\colon X_T \lra \mathbb{P}^1
\]
such that the restriction of $b$ to each of the four irreducible components of the fibers $F_1$ and $F_2$ is an isomorphism. For almost every point $p\in \mathbb{P}^1$ the fiber $b^{-1}(p)$ is a rational curve with four marked points of intersection with the components of the fibers $F_1$ and $F_2.$ The cross-ratio of the points is a rational function on the target space $\mathbb{P}^1$ of $b.$ To write it down explicitly we need to fix three points $0, 1$ and $\infty$ on the target. We will see that for one such choice this rational function is equal to the Cho-Kim function $\textup{CK}^{\textup{L}}_T$, and for another, the dual  Cho-Kim function $\textup{CK}^{\textup{A}}_T.$ This immediately implies Theorem \ref{TheoremProjectiveTetrahedra}.

The structure of possible configurations of degenerate fibers of a rational elliptic surface is well understood, see \cite{KS91}, \cite{Per90}. It is interesting to extend Theorem \ref{TheoremCorrespondence} to non-generic $D_6-$surfaces and  non-generic non-Euclidean tetrahedra:  tetrahedra with ideal vertices, Euclidean tetrahedra, disphenoids etc. 

\begin{example}
Consider the spherical  tetrahedron from Example \ref{ExampleSphericalTetrahedron}.
Both $\textup{L}_T$ and $\textup{A}_T$ take values $\pm 1$ on the roots. The corresponding rational elliptic surface has four singular fibers: two of type $I_2$ and two of type $I_4.$ There is a unique surface $X_{4422}$ with these types of fibers and it has eight $(-1)-$curves, see \cite[\S5]{MP86}. 
\end{example}

\subsection{Notation and conventions} 

Throughout this paper we work over $\mathbb{C}$.  For distinct points $P_1, P_2\in \mathbb{P}^3$ we denote by $(P_1P_2)$ the line  containing them. Similarly, for three points  $P_1, P_2, P_3$ in general position we denote by $(P_1P_2P_3)$ the plane containing them. For a point $P$ and a line $l$ not containing $P$ we denote by $\langle P, l \rangle$ the plane spanned by $P$ and $l$. Finally, for a pair of intersecting distinct lines $l_1, l_2$ we denote by $\langle l_1, l_2 \rangle$ the plane that contains them. 

The cross-ratio of four points $P_1,P_2, P_3, P_4$ on a rational curve $C$ is denoted  $[P_1, P_2, P_3, P_4]_C.$ Consider a birational isomorphism  $\varphi \colon X \dashrightarrow Y$  of smooth projective surfaces $X$ and $Y.$ For a smooth projective curve $C$  in $X$ we define
\[
\varphi(C)=\overline{(C \smallsetminus \textup{Ind}(\varphi))},
\]  
where $\textup{Ind}(\varphi)$ is the locus of indeterminacy of $\varphi.$ Then $\varphi(C)$ is a point or
the map $\varphi$ defines an isomorphism $C \lra \varphi(C),$ which we denote by the same letter. If $C$ is rational and $\varphi(C)$ is not a point, then for every four points $P_1, P_2, P_3, P_4 \in C$ we have
\[
[P_1, P_2, P_3, P_4]_C=[\varphi(P_1), \varphi(P_2), \varphi(P_3), \varphi(P_4)]_{\varphi(C)}.
\]
If for  $E\in \textup{Pic}(X)$ there exists a unique curve $C$ on $X$ such that $[C]=E,$ then we will denote the curve by the same letter $E.$

Finally, for a root system $\mathcal{R}$ we denote the corresponding lattice by $\textup{Q}(\mathcal{R})$ and the set of roots by $\textup{R}(\mathcal{R}).$ We adopt a convention, according to which we have $r^2=-2$ for $r\in \textup{R}(\mathcal{R}).$ We mainly work with a concrete root system of type  $E_8$ and its subsystems (see \S \ref{SectionE8RootSystem}), which we denote by roman $\textup{E}_8.$

\paragraph{Acknowledgments}  I would like to thank F. Brown, I. Dolgachev, O. Martin,  E. Looijenga, and A.~Goncharov  for motivating discussions and invaluable help with preparing this manuscript. I am very grateful to P.~Deligne who read a preliminary version of this paper and made a lot of useful comments and suggestions. I also
thank Gerhard Paseman for checking some of the details in an earlier draft.

\section{Trigonometry of projective tetrahedra}\label{SecT}
\subsection{Projective tetrahedra} \label{SectionProjectiveTetrahedra}

\begin{definition}\label{DefinitionProjectiveTetrahedra} A projective tetrahedron $T=(Q,\mathcal{H})$ is a configuration, consisting of a smooth quadric $Q$  in $\mathbb{P}^3$ and an ordered set of four planes $\mathcal{H}=\{H_1, H_2, H_3, H_4\}$ in general position such that conics $Q \cap H_i$ are smooth and points $A_i=\bigcap_{j\neq i}H_j$ do not lie on $Q.$ Planes $H_i$ are called {\it faces} of $T,$ lines $H_i \cap H_j$ are called {\it edges} of $T,$ and  points $A_i$  are called {\it vertices} of $T$.
\end{definition}

A smooth quadric in $\mathbb{P}^3$ is isomorphic to $\mathbb{P}^1 \times \mathbb{P}^1$ and defines a duality between subspaces of $\mathbb{P}^3$ known as the {\it polar duality}. Points, lines, and planes are dual to planes, lines, and points respectively. A general line $l$ intersects $Q$ in two points; ordering of these points is called an {\it orientation} of $l$. Lines contained in $Q$ are called {\it generators} of the quadric; there are two families of generators. An ordering of these families is called an {\it orientation} of $Q.$ 

Every point $p\in Q$ is contained in exactly two generators, which belong to the different families. After an orientation of $Q$ is fixed, we denote them $L_p$ and $R_p$ and call {\it the left generator} and {\it the right generator} respectively. Generators of the quadric are self-dual lines. Let $l$ be a line in $\mathbb{P}^3$ which intersects $Q$ transversally at points $x$ and $y$. Then $Q\cap l=\{x,y\}$ and the dual line $l^{\vee}$ is the unique line passing through the points $L_x\cap R_y$ and $R_x\cap L_y.$ 

Let $T=(Q,\mathcal{H})$ be a projective tetrahedron in $\mathbb{P}^3$. The dual tetrahedron $T^{\vee}=(Q,\mathcal{A})$ is given by the configuration consisting of the same quadric $Q$ and planes $\mathcal{A}=\{A_1^{\vee}, A_2^{\vee},A_3^{\vee},A_4^{\vee}\}$ in $\mathbb{P}^3$ dual to the vertices of $T.$  Notice that the edges of $T$ are dual to the edges of $T^{\vee}.$

\begin{definition}
	A {\it marking} of a projective tetrahedron $T=(Q,\mathcal{H})$ is combinatorial data consisting of an orientation of the quadric $Q$ and orientations of the edges of $T.$
\end{definition}
Denote by $E_{ij}$ the first point of $(A_iA_j)\cap Q$ and by $E_{ji}$ the second point. Every marking of a tetrahedron $T$ determines a marking of the dual tetrahedron $T^{\vee}$ in the following way.  Points $(A_iA_j)^{\vee}\cap Q$ are ordered so that $E_{ij}'=L_{E_{ij}}\cap R_{E_{ji}}$ is the first and $E_{ji}'=R_{E_{ij}}\cap L_{E_{ji}}$ is the second. It is easy to see that projective duality is an involution on marked projective tetrahedra.
 

\subsection{The  root system $\textup{E}_8$} \label{SectionE8RootSystem}
A  root system of type $E_8$ consists of $240$ vectors in $\mathbb{R}^8,$ see \cite[\S 6.4.10]{Bou68}. It contains a set of $8$ orthogonal roots and the Weyl group $W(E_8)$ acts transitively on such sets, see \cite[Proposition 2.1]{DM10}.

\begin{proposition} 
In a root system of type $E_8$, let $S$ be a set of $8$ orthogonal roots. Then $S$ has a natural structure of an affine space of dimension $3$ over $\mathbb{F}_2.$ The planes for this structure are sets $P$ of $4$ elements of $S$ such that $\frac{1}{2}\sum_{\alpha \in P}\alpha$ is a root of $E_8$ and the stabilizer of $S$ in $W(E_8)$ is the group of affine transformations.
\end{proposition}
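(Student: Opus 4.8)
The plan is to work with an explicit model of $\textup{Q}(\textup{E}_8)$ and reduce the statement to a calculation in a small finite group. I would take the standard realization in which $\textup{Q}(\textup{E}_8)$ is the lattice in $\mathbb{R}^8$ spanned by $\mathbb{Z}^8$ together with $\tfrac12(1,\dots,1)$, with roots the $112$ vectors $\pm e_i\pm e_j$ ($i<j$) and the $128$ vectors $\tfrac12(\pm1,\dots,\pm1)$ with an even number of minus signs. (I recall $r^2 = -2$ in the paper's convention, but for this argument only orthogonality matters, so I will use the positive-definite normalization.) First I would fix the concrete orthogonal set $S_0 = \{\pm e_1 \pm e_2,\ \pm e_3\pm e_4,\dots\}$— more precisely one makes a choice of $8$ pairwise orthogonal roots among the $e_i\pm e_j$, e.g. $e_1-e_2,\,e_1+e_2,\,e_3-e_4,\,e_3+e_4,\,e_5-e_6,\,e_5+e_6,\,e_7-e_8,\,e_7+e_8$; one checks these are pairwise orthogonal and that no $8$-element orthogonal set can contain a root of the half-integer type together with roots of the integer type incompatibly, so up to $W(E_8)$ this is the unique type. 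Since $W(E_8)$ acts transitively on $8$-element orthogonal sets (quoted from \cite[Proposition 2.1]{DM10}), it suffices to prove the statement for $S_0$.

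Next I would describe the $\mathbb{F}_2$-affine structure explicitly on $S_0$. Label the four coordinate pairs by $a=\{1,2\},b=\{3,4\},c=\{5,6\},d=\{7,8\}$; each $r\in S_0$ is $\varepsilon_k e_{2k-1}+\varepsilon_k' e_{2k}$ with $(\varepsilon_k,\varepsilon_k')$ either $(1,-1)$ or $(1,1)$ for exactly one pair $k\in\{a,b,c,d\}$ — wait, more simply: an orthogonal root of integer type is $\pm(e_i - e_j)$ or $\pm(e_i+e_j)$. I would instead use the cleaner description via the $\tfrac12$-type: one verifies that for any two distinct $r,s\in S_0$ there is a unique third element $t\in S_0$ with $\tfrac12(r+s+t+u)$ a root for a unique fourth $u$; equivalently one writes down a bijection $S_0 \to \mathbb{F}_2^3$ such that the $4$-element subsets $P$ with $\tfrac12\sum_{\alpha\in P}\alpha \in \textup{R}(\textup{E}_8)$ are exactly the affine planes. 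The key identity to check is: $\tfrac12\sum_{\alpha\in P}\alpha$ has squared length $2$ (in the positive normalization) and lies in the lattice precisely when $P$ is one of these $14$ distinguished $4$-subsets, and that the $14$ affine planes of $\mathbb{A}^3(\mathbb{F}_2)$ match them under a suitable labeling. This is a finite check: $\binom{8}{4}=70$ subsets, of which exactly $14$ work, matching the $14$ planes (the $7$ "linear" ones through a chosen origin and their translates). The affine-space axioms (any $3$ affinely independent points lie in a unique plane, etc.) then follow from the standard structure of $\mathbb{A}^3(\mathbb{F}_2)$.

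For the stabilizer statement, I would argue in two inclusions. Any $w\in W(E_8)$ fixing $S$ setwise permutes the $14$ distinguished $4$-subsets (since $w$ is a lattice isometry, it sends roots to roots, hence preserves the condition "$\tfrac12\sum$ is a root"), so it acts by affine transformations on $S$; this gives a homomorphism $\mathrm{Stab}_{W(E_8)}(S) \to \mathrm{AGL}_3(\mathbb{F}_2)$. Injectivity: an element fixing all $8$ roots of $S$ fixes a spanning set of $\mathbb{R}^8$, hence is trivial. For surjectivity I would exhibit generators of $\mathrm{AGL}_3(\mathbb{F}_2)$ inside $W(E_8)$: the translations by reflections $s_\beta$ where $\beta = \tfrac12\sum_{\alpha\in P}\alpha$ for a plane $P$ through the "origin" of $S$ (such a reflection swaps the elements of $S$ in pairs, realizing a translation of $\mathbb{A}^3$), and the linear part $\mathrm{GL}_3(\mathbb{F}_2)$ realized by permutations of coordinate blocks together with the Weyl group elements of the $D_k$-subsystems — concretely the $\mathbb{S}_4$ permuting the four pairs $\{a,b,c,d\}$ plus the sign-change-type reflections $s_{e_i\pm e_j}$ give enough automorphisms. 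Since $|\mathrm{AGL}_3(\mathbb{F}_2)| = 8\cdot 168 = 1344$ and one can independently count $|\mathrm{Stab}_{W(E_8)}(S)| = |W(E_8)|/\#\{\text{8-orthog. sets}\}$, matching these two numbers (the orbit–stabilizer count is a known fact, $\#\{\text{sets}\} = 696729600/1344 = 518400$) confirms the map is an isomorphism. The main obstacle I anticipate is organizing the explicit labeling $S_0 \cong \mathbb{A}^3(\mathbb{F}_2)$ so that the $14$ root-sum $4$-subsets visibly become the $14$ affine planes, and verifying surjectivity onto $\mathrm{AGL}_3(\mathbb{F}_2)$ without brute force — this is best handled by exhibiting the translation subgroup via the $s_\beta$ and the point stabilizer via a copy of $\mathrm{GL}_3(\mathbb{F}_2)$ coming from an $A_7$ or $D_4$-type subsystem, then invoking the order count.
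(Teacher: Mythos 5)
Your route is genuinely different from the paper's, which proves this proposition by citing \cite[Theorem~2.5]{DM10}; an explicit coordinate verification is a legitimate alternative, and the first half of your plan does work. For $S_0=\{e_1\pm e_2,\,e_3\pm e_4,\,e_5\pm e_6,\,e_7\pm e_8\}$ exactly $6+8=14$ of the $70$ four-element subsets have a root as half-sum (the six unions of two coordinate blocks, and the eight transversals taking one root per block whose sign pattern sums to $0\bmod 4$), these form the planes of an $\mathrm{AG}(3,2)$ (they give a $3$-$(8,4,1)$ design), and injectivity of $\mathrm{Stab}_{W(E_8)}(S)\to \mathrm{AGL}_3(\mathbb{F}_2)$ follows, as you say, because $S$ spans $\mathbb{R}^8$.

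The gap is in surjectivity, and it is threefold. First, the reflections $s_\beta$ with $\beta=\tfrac12\sum_{\alpha\in P}\alpha$ do not stabilize $S$ at all, let alone realize translations: for $P=\{e_1-e_2,\,e_1+e_2,\,e_3-e_4,\,e_3+e_4\}$ one has $\beta=e_1+e_3$ and $s_\beta(e_1-e_2)=-e_2-e_3\notin S_0$; in general $s_\beta$ fixes the four roots of $S\smallsetminus P$ and moves the four roots of $P$ out of $S$. The actual translations are realized by quite different elements, e.g.\ the even sign change $e_{2k}\mapsto -e_{2k}$ $(k=1,\dots,4)$ and block-swapping coordinate permutations. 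Second, your remaining generators (block permutations and the $s_{e_i\pm e_j}$) are all monomial; the monomial elements of $W(E_8)$ stabilizing $S_0$ form a group of order only $192$ (block permutations times even sign changes on the even coordinates), whose image is a proper subgroup of $\mathrm{AGL}_3(\mathbb{F}_2)$ of index $7$ — in particular it contains no element of order $7$, so no copy of $\mathrm{GL}_3(\mathbb{F}_2)$ as point stabilizer can arise this way; you would need, and never construct, stabilizing elements that genuinely mix integer and half-integer roots. Third, the fallback order count is circular as written: you obtain $\#\{\text{8-orthogonal sets}\}=696729600/1344=518400$ by dividing by $|\mathrm{AGL}_3(\mathbb{F}_2)|=1344$, i.e.\ by assuming the stabilizer order you are trying to prove. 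To close the argument you must either count independently the $8A_1$-frames (there are $2025$, giving $518400=2025\cdot 2^8$ sets) or the orthogonal $8$-sets directly, or exhibit honest generators — the translation subgroup as above plus a non-monomial stabilizing element of order $3$ or $7$ built from half-integer roots — or simply quote \cite[Theorem~2.5]{DM10} as the paper does.
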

\begin{proof}
See \cite[Theorem 2.5]{DM10}.	
\end{proof}

There is a way to construct an $E_8$ root system out of an affine  space $S$ over $\mathbb{F}_2$ of dimension $3.$ Consider the following subset $C\subset\left (\mathbb{Z}/2\mathbb{Z}  \right)^S:$
\[
C=\left\{0, \sum_{s\in S} e_s, \sum_{s\in P} e_s \text{ for }P \text{ a plane in }S\right\}.
\]
Then $C$ is a subgroup: if $P, P'$ are planes in $S,$ their symmetric difference is the empty set, $S$, or a plane in $S.$ In a lattice $\left(\frac{1}{2}\mathbb{Z}\right)^S$	 with quadratic form $-2\bigl(\sum x_i^2\bigr)$ consider the subspace $\mathbb{Z}^S \subset E \subset \left(\frac{1}{2}\mathbb{Z}\right)^S$ given by 
\[
E=\{x \mid 2x \text{ has an image in } \left(\mathbb{Z}/2\mathbb{Z}\right)^S  \text{ which is in }C \}.
\]
It is not hard to see that $E$ is a lattice of type $E_8.$ The roots are vectors $\pm e_s$ for $s\in S$ and $\dfrac{\pm e_a \pm e_b \pm e_c \pm e_d}{2}$
for $\{a,b,c,d\}$ -- an affine plane.

For a set $I=\{1,2,3,4\}$  consider an affine space $S$ of even subsets of $I$. Explicitly,
\[
S_I=\Big \{\varnothing, \{1,2\}, \{1,3\}, \{1,4\}, \{2,3\}, \{2,4\}, \{3,4\},I\Big\}.
\]
From now on we denote by (roman) $\textup{E}_8$ the root system obtained from the affine space $S_I$ by the construction above.  We put  $e_{ij}:=e_{\{i,j\}}\text{ for } i<j.$  Next, we denote the orthogonal complement to the root $e_I$ in $\textup{E}_8$ by $\textup{E}_7^{\textup{L}}$ and  the orthogonal complement to  $e_{\varnothing}$  by $\textup{E}_7^{\textup{A}}.$ These  are root systems of type $E_7$ and their intersection $\textup{D}_6=\textup{E}_7^{\textup{L}}\cap \textup{E}_7^{\textup{A}}$ is a root  system of type $D_6.$  A map $D \colon 	S_I \lra S_I$ sending a subset of $I$ to its complement is affine and so lies in the Weyl group $W(\textup{E}_8).$ It defines an isometry $D\colon \textup{Q}\bigl(\textup{E}_7^\textup{L}) \lra \textup{Q}\bigl(\textup{E}_7^\textup{A}\bigr )$ which leaves the lattice $\textup{Q}\bigl(\textup{D}_6\bigr )$  invariant.

\subsection{Edge function and angle function} \label{SectionEdgeLengths}
We start with discussing an algebro-geometric counterpart of the Poincare model of hyperbolic geometry. Consider a double cover of the projective space $\mathbb{P}^3$ ramified at $Q.$ This is a $3-$dimensional smooth quadric $\widetilde{Q};$ $\mathbb{P}^3$ is the quotient of $\widetilde{Q}$ by an involution  fixing $Q.$  For any line $l$ in $\mathbb{P}^3$ the inverse image $\widetilde{l}$ is a ``straight line'' of $\widetilde{Q},$ that is a linear section of  $\widetilde{Q}$ stable under the involution. We have a double cover
$\widetilde{l}\lra l$ ramified at $l\cap Q.$ Suppose that $l$ is transversal to $Q$ and choose an orientation of $l$ and assume that $l\cap Q=\{P_{12}, P_{21}\}$. Then
\[
l\smallsetminus (l\cap{Q})\cong \mathbb{P}^1\smallsetminus\{0,\infty\}
\] 
becomes a $\mathbb{C}^\times-$principal homogeneous space. Consider a pair of points $p_1, p_2\in l\smallsetminus \{l\cap Q\}.$ As $\widetilde{l}\lra l$ is isomorphic to $z \longmapsto z^2\colon \mathbb{P}^1\lra \mathbb{P}^1,$ for $\widetilde{p}_1$ above $p_1$  and $\widetilde{p}_2$ above $p_2$ we have
\[
[\widetilde{p}_1,P_{12},\widetilde{p}_2,P_{21}]^2=[p_1,P_{12},p_2,P_{21}]
\]
and replacing $\widetilde{p}$ with the other preimage of $p$ changes $[\widetilde{p}_1,P_{12},\widetilde{p}_2,P_{21}]$ to $-[\widetilde{p}_1,P_{12},\widetilde{p}_2,P_{21}].$ Same for $p_2.$  

\begin{lemma}\label{LemmaProjectiveTriangle}
Consider a triple of distinct points $p_1, p_2,p_3\in \mathbb{P}^3\smallsetminus Q.$  Suppose that lines $(p_ip_j)\cap Q=\{P_{ij},P_{ji}\}$ are oriented. Then for  $\widetilde{p}_1, \widetilde{p}_2, \widetilde{p}_3$ lying over $p_1, p_2, p_3$ we have
\[
[\widetilde{p}_1,P_{12},\widetilde{p}_2,P_{21}][\widetilde{p}_2,P_{23},\widetilde{p}_3,P_{32}][\widetilde{p}_1,P_{13},\widetilde{p}_3,P_{31}]=\bigl[p_3,P_{31},p_1,(p_1p_3)\cap (P_{12}P_{23})\bigr]_{(p_1p_3)}.
\]
\end{lemma}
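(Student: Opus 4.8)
The statement is fundamentally about a configuration of three points in $\mathbb{P}^3 \smallsetminus Q$, and since everything in sight is projectively natural, the plan is to reduce to a concrete normal form and then compute. First I would restrict attention to the plane $H = (p_1 p_2 p_3)$ spanned by the three points; the conic $C = Q \cap H$ is smooth by genericity, and all six points $P_{ij}$ lie on $C$, as does the point $(p_1 p_3) \cap (P_{12} P_{23})$. The double cover $\widetilde{Q} \to \mathbb{P}^3$ restricts over $H$ to the double cover of $H \cong \mathbb{P}^2$ ramified at $C$, whose total space is again a smooth quadric surface; the ``straight lines'' $\widetilde{l}$ are the preimages of lines $l \subset H$. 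So the whole identity takes place inside this quadric surface, and I can forget the ambient $\mathbb{P}^3$.

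\textbf{Main computation.} Parametrize $C \cong \mathbb{P}^1$, say by $t$, so that the double cover of $H$ ramified at $C$ is realized as $\mathbb{P}^1 \times \mathbb{P}^1$ minus the diagonal mapping to $H \smallsetminus C$, with a point $p$ off $C$ corresponding to the pair of tangent-line contact parameters, i.e. the two roots of the quadratic cutting out the polar line of $p$. Concretely, assign to each $p_k$ a pair $(a_k, b_k)$ of parameters on $C$ such that the chord through the two points of $l \cap C$ for a line through $p_k$ and $p_m$ has endpoints expressible via these. The key classical fact I would invoke is that for a line $l$ through $p_k$ and $p_m$ meeting $C$ at parameters $u, v$, the cross-ratio on the ``straight line'' $\widetilde{l}$ of the two lifts $\widetilde{p}_k, \widetilde{p}_m$ together with the two ramification points is, up to sign and choice of lift, a ratio of the form $\sqrt{(a_k - u)(a_k - v)(b_m - u)(b_m - v)} \big/ \sqrt{(a_m - u)(a_m - v)(b_k - u)(b_k - v)}$ — something built so that squaring gives the downstairs cross-ratio $[p_k, P_{km}, p_m, P_{mk}]$ on $l$, expressed via the four cross-ratios of $\{u, v, \text{tangent data}\}$ on $C$. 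Having each lifted cross-ratio as a product of two ``half'' factors $\phi(a_k, l)\phi(b_m, l)^{-1}$, the left side of the lemma telescopes: the product over the triangle's three edges becomes, after careful sign bookkeeping, a single ratio involving only the $a$'s, $b$'s of $p_1$ and $p_3$ and the chord $(p_1 p_3)$ — precisely the cross-ratio on $(p_1 p_3)$ of $p_3$, $P_{31}$, $p_1$, and the fourth point $(p_1 p_3) \cap (P_{12} P_{23})$. The appearance of the chord $(P_{12} P_{23})$ is exactly what is needed to close the telescoping: the ``missing'' parameters from $p_2$'s contribution at the two edges incident to $p_2$ get absorbed into the secant line $(P_{12} P_{23})$, and its intersection with $(p_1 p_3)$ records their combined effect.

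\textbf{Expected obstacle.} The genuine subtlety is not the algebra of cross-ratios but the \emph{sign ambiguity} in the lifts $\widetilde{p}_k$. Each $[\widetilde{p}_i, P_{ij}, \widetilde{p}_j, P_{ji}]$ is only well defined up to sign, and changing the choice of $\widetilde{p}_2$ flips the two factors on the two edges through $p_2$ simultaneously, so their product is unchanged — this is why the right-hand side, which does not mention $\widetilde{p}_2$ at all, is well defined. I need to track this carefully: show the left-hand product is independent of all three lift choices (the $\widetilde{p}_1$ and $\widetilde{p}_3$ ambiguities are accounted for on the right, since $[\widetilde p_1, \ldots, \widetilde p_3, \ldots]$-type cross-ratios do not appear there — wait, they do not appear literally, but the right side is a cross-ratio of \emph{un-tilded} points, hence canonically defined), and then it suffices to verify the identity for one convenient choice of lifts. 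I would fix lifts compatibly (e.g. all ``on the same sheet'' near a chosen basepoint of $\widetilde{Q}$) and then the square-root factors $\phi$ can be chosen coherently, reducing the whole thing to the already-established squared identity plus a parity check. The cleanest route is probably: establish the identity after squaring (which is a pure cross-ratio identity on the conic $C$, provable by direct parametrization or by recognizing it as a degenerate case of a standard chord relation), then pin down the sign by specializing to a real or otherwise controlled configuration where all quantities are manifestly positive. Degenerate positions (three collinear $p_i$, or a $p_i$ on $C$) are excluded by the hypothesis that the $p_i$ are distinct points of $\mathbb{P}^3 \smallsetminus Q$ in the configuration of a projective tetrahedron, so the generic computation suffices.
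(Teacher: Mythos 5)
Your plan is essentially the paper's own proof: the paper likewise squares the triple product to eliminate the lift ambiguity, verifies the resulting identity in downstairs cross-ratios directly (as a Menelaus-type relation, which your conic parametrization would carry out), and then fixes the sign by specializing to a configuration with $p_1,p_2,p_3\in\mathbb{H}^3$, where all four quantities are manifestly positive. Your two key observations --- that the left-hand side is independent of all three lift choices, and that the remaining ambiguity is only a sign to be pinned down on one positive real configuration --- are exactly the points the paper relies on, so the approaches coincide.
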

\begin{proof}
Equality 
\begin{align*}
	&\left ([\widetilde{p}_1,P_{12},\widetilde{p}_2,P_{21}][\widetilde{p}_2,P_{23},\widetilde{p}_3,P_{32}][\widetilde{p}_1,P_{13},\widetilde{p}_3,P_{31}]\right)^2\\
	=&[p_1,P_{12},p_2,P_{21}][p_2,P_{23},p_3,P_{32}][p_1,P_{13},p_3,P_{31}]\\
	=&\bigl[p_3,P_{31},p_1,(p_2p_3)\cap (P_{12}P_{23})\bigr]_{(p_2p_3)}^2	
\end{align*}
is a version of the Menelaus's theorem and can be easily checked directly, so we have 
\[
[\widetilde{p}_1,P_{12},\widetilde{p}_2,P_{21}][\widetilde{p}_2,P_{23},\widetilde{p}_3,P_{32}][\widetilde{p}_1,P_{13},\widetilde{p}_3,P_{31}]=\pm\bigl[p_3,P_{31},p_1,(p_2p_3)\cap (P_{12}P_{23})\bigr]_{(p_2p_3)}.
\]
To fix the sign, consider a case, when $p_1, p_2, p_3$ lie in $\mathbb{H}^3,$ see \S \ref{SectionIntroductionTetrahedra}. In this case we easily see that numbers
\[
[\widetilde{p}_1,P_{12},\widetilde{p}_2,P_{21}], \ [\widetilde{p}_2,P_{23},\widetilde{p}_3,P_{32}],\ [\widetilde{p}_1,P_{13},\widetilde{p}_3,P_{31}], \ \bigl[p_3,P_{31},p_1,(p_1p_3)\cap (P_{12}P_{23})\bigr]_{(p_1p_3)}
\]
are positive. From here the statement follows.
\end{proof}

Consider a marked projective tetrahedron $T=(Q,\mathcal{H}).$ Denote by $\widetilde{A}_{i}\in \widetilde{Q}$ lifts of its vertices.   Next consider a map
$\widetilde{\textup{L}}_T\colon \left ( \frac{1}{2}\mathbb{Z}\right )^{S_I} \lra \mathbb{C}^{\times}$
defined by the rule 
\begin{align*}
&\widetilde{\textup{L}}_T\left(\frac{1}{2}e_{ij}\right)=\bigl[\widetilde{A}_i,E_{ij},\widetilde{A}_j,E_{ji}\bigr] \text{ for }1\leq i<j\leq 4,\\  
&\widetilde{\textup{L}}_T\left(\frac{1}{2}e_{\varnothing}\right)=\widetilde{\textup{L}}_T\left(\frac{1}{2}e_{I}\right)=1.
\end{align*}
Denote  $\textup{L}_T$ the restriction of $\widetilde{\textup{L}}_T$ to 
$
\textup{Q}\bigl (\textup{E}_7^{\textup{L}}\bigr)\subset  \textup{Q}\bigl (\textup{E}_8 \bigr)\subset  \bigl( \frac{1}{2}\mathbb{Z}\bigr )^{S_I}.
$ 

\begin{lemma}
The function $\textup{L}_T\colon \textup{Q}\bigl(E_7^\textup{L}\bigr)\lra \mathbb{C}^\times$ does not depend on the choice of the lifts $\widetilde{A}_i\in \widetilde{Q}.$
\end{lemma}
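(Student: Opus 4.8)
The plan is to show that changing a lift $\widetilde{A}_i$ to the other preimage $\widetilde{A}_i'$ of $A_i$ multiplies each factor $\bigl[\widetilde{A}_i, E_{ij}, \widetilde{A}_j, E_{ji}\bigr]$ involving the index $i$ by $-1$, and then to check that every element of $\textup{Q}\bigl(\textup{E}_7^\textup{L}\bigr)$ is a $\mathbb{Z}$-combination of the $\frac12 e_{ij}$ in which the total multiplicity of each index $i\in I$ is even, so that these sign changes cancel. First I would recall from the discussion preceding Lemma~\ref{LemmaProjectiveTriangle} that for a line $l$ transversal to $Q$ with $l\cap Q=\{P_{12},P_{21}\}$, replacing a lift $\widetilde{p}$ of a point $p\in l\smallsetminus Q$ by the other preimage changes $[\widetilde{p}_1,P_{12},\widetilde{p}_2,P_{21}]$ by a factor of $-1$; applied with $l=(A_iA_j)$, $p=A_i$, this says $\widetilde{\textup{L}}_T(\tfrac12 e_{ij})$ acquires a factor $-1$ under $\widetilde{A}_i\mapsto\widetilde{A}_i'$, and likewise under $\widetilde{A}_j\mapsto\widetilde{A}_j'$. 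Since $\widetilde{\textup{L}}_T$ is a group homomorphism on $\bigl(\tfrac12\mathbb{Z}\bigr)^{S_I}$ and the values on $\tfrac12 e_\varnothing$, $\tfrac12 e_I$ are $1$ by definition, the effect on a general lattice element is determined by the parity of the coefficients of the $\tfrac12 e_{ij}$.

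Next I would make the lattice-theoretic observation precise. Writing $S_I=\{\varnothing,\{1,2\},\dots,\{3,4\},I\}$, the roots of $\textup{E}_8$ are $\pm e_s$ and $\tfrac{\pm e_a\pm e_b\pm e_c\pm e_d}{2}$ for affine planes $\{a,b,c,d\}$, and $\textup{Q}\bigl(\textup{E}_7^\textup{L}\bigr)=e_I^\perp$. I claim $\textup{Q}\bigl(\textup{E}_7^\textup{L}\bigr)$ is spanned by the vectors $e_s-e_{s'}$ and $\tfrac12(e_a+e_b+e_c+e_d)$ with $a,b,c,d,s,s'$ ranging over even subsets distinct from $I$; equivalently, modulo $2$ every element of $e_I^\perp$ lies in the code $C$ restricted away from the coordinate $e_I$. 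For each such generating vector one computes the index multiplicities: $e_{ij}-e_{kl}$ has index-multiplicity vector whose coordinate at index $m$ counts $|\{i,j\}\cap\{m\}|+|\{k,l\}\cap\{m\}|$, and for the half-sum of the four basis elements of an affine plane one similarly gets an even count at every index $m\in I$ (this uses the explicit description of the affine planes of $S_I$ as translates of the three $2$-dimensional subspaces). Hence on a basis of $\textup{Q}\bigl(\textup{E}_7^\textup{L}\bigr)$, and therefore on all of it, the total index-multiplicity of each $i\in I$ is even, so changing the lift $\widetilde{A}_i$ multiplies $\textup{L}_T$ by $(-1)^{(\text{even})}=1$.

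The main obstacle I anticipate is the bookkeeping in the second paragraph: one has to pin down exactly which $\tfrac{\pm e_a\pm e_b\pm e_c\pm e_d}{2}$ lie in $e_I^\perp$, confirm that these together with the differences $e_s-e_{s'}$ really generate the full lattice $\textup{Q}\bigl(\textup{E}_7^\textup{L}\bigr)$ and not merely a finite-index sublattice, and verify the parity claim for each affine plane of $S_I$ (there are seven planes through $\varnothing$ and one must also handle planes not containing $\varnothing$, or reduce to those via translation). A clean way to organize this is to note that $\bigl(\tfrac12\mathbb{Z}\bigr)^{S_I}$ carries the parity homomorphisms $\varepsilon_i\colon x\mapsto \sum_{s\ni i}(2x_s)\bmod 2$ for $i\in I$, that $\widetilde{A}_i\mapsto\widetilde{A}_i'$ acts on $\widetilde{\textup{L}}_T$ by the sign character $(-1)^{\varepsilon_i}$, and that one only needs $\varepsilon_i$ to vanish on $\textup{Q}\bigl(\textup{E}_7^\textup{L}\bigr)$. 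Since $\textup{E}_8\subset\bigl(\tfrac12\mathbb{Z}\bigr)^{S_I}$ consists of vectors whose mod-$2$ reduction lies in the code $C$, and every codeword of $C$ other than $0$ has support $S_I$ or a plane, both of which meet each index $i$ in an even number of the $2$-element subsets, $\varepsilon_i$ vanishes on $\textup{E}_8$ already — hence a fortiori on $\textup{E}_7^\textup{L}$ — which gives the result without needing an explicit generating set. I would present this code-theoretic argument as the actual proof.
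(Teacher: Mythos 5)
Your first step (a lift flip multiplies each factor $\bigl[\widetilde{A}_i,E_{ij},\widetilde{A}_j,E_{ji}\bigr]$ with $i$ among its indices by $-1$) is correct, but the code-theoretic argument you designate as the actual proof fails, and it fails exactly where the hypothesis $r\perp e_I$ has to be used. Read one way, your $\varepsilon_i(x)=\sum_{s\ni i}2x_s \bmod 2$ runs over all $s$ containing $i$, including $s=I$; then $\varepsilon_i$ does vanish on $\textup{Q}(\textup{E}_8)$, but the flip $\widetilde{A}_i\mapsto\widetilde{A}_i'$ does not act by $(-1)^{\varepsilon_i}$, because the factor $\widetilde{\textup{L}}_T(\tfrac12 e_I)=1$ involves no lifts: the true exponent is the parity over the two-element subsets only. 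Read the other way ($\varepsilon_i$ over two-element subsets only), it does compute the flip sign, but it does not vanish on $\textup{Q}(\textup{E}_8)$: the full-support codeword puts three $2$-subsets through each index, and a plane containing $I$, e.g. $\{\{1,2\},\{1,3\},\{1,4\},I\}$, puts three through the index $1$; so your claim that every nonzero codeword meets each index in an even number of $2$-element subsets is false. The conclusion your argument would deliver --- lift-independence of $\widetilde{\textup{L}}_T$ on all of $\textup{Q}(\textup{E}_8)$ --- is in fact false: for the root $r=\tfrac{e_{12}+e_{13}+e_{14}+e_I}{2}$ one has $\widetilde{\textup{L}}_T(r)=\bigl[\widetilde{A}_1,E_{12},\widetilde{A}_2,E_{21}\bigr]\bigl[\widetilde{A}_1,E_{13},\widetilde{A}_3,E_{31}\bigr]\bigl[\widetilde{A}_1,E_{14},\widetilde{A}_4,E_{41}\bigr]$, which changes sign when $\widetilde{A}_1$ is replaced by its other preimage. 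So the ``a fortiori'' step throws away the only place where membership in $\textup{E}_7^\textup{L}$ (rather than $\textup{E}_8$) enters.

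The repair is short and lands on the paper's own argument. Since the bilinear form gives $\langle r,e_I\rangle=-2r_I$, orthogonality to $e_I$ forces the $e_I$-coordinate of $r$ to vanish; hence $2r\bmod 2$ is a codeword of $C$ whose support does not contain $I$, i.e.\ $0$ or a plane avoiding $I$. Such a plane is either four $2$-subsets summing to $\varnothing$ (each index covered twice) or $\varnothing$ together with the three $2$-subsets of a triple (each index covered $0$ or $2$ times); in both cases every index meets the $2$-element part of the support evenly, so each $\widetilde{A}_i$ occurs an even number of times in the product and the signs cancel. This is in substance the proof in the paper, which phrases the same parity statement via the linear map $p\colon(\mathbb{Z}/2\mathbb{Z})^{S_I}\to S_I$, $e_s\mapsto s$, together with the containment of $\textup{Q}\bigl(\textup{E}_7^\textup{L}\bigr)$ in its kernel. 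Your generating-set sketch in the second paragraph (generators restricted to subsets distinct from $I$) is also workable, but, as you yourself note, it still requires verifying generation of the full lattice, which the parity argument above avoids.
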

\begin{proof} Consider a linear map $p \colon \left(\mathbb{Z}/2\mathbb{Z}\right)^{S_I}\lra S$	 sending $e_s$ to $s$ (we view $S_I$ as a vector space  with  $e_\varnothing$ as an origin). Then  $\textup{Q}\bigl(\textup{E}_7^\textup{L}\bigr)$ is contained in $\textup{Ker}(p),$ as one can easily check.  For $r\in \textup{Ker}(p)$ the value $\widetilde{\textup{L}}_T(r)$ is a product of expressions $\bigl[\widetilde{A}_i,E_{ij},\widetilde{A}_j,E_{ji}\bigr]^{\pm 1} $ such that  any lift $\widetilde{A}_i$ occurs an even number of times and thus $\widetilde{\textup{L}}_T(r)$ does not depend on the choice of lifts $\widetilde{A}_i\in \widetilde{Q}.$
\end{proof}

\begin{definition} The function $\textup{L}_T\colon \textup{Q}\bigl(\textup{E}_7^\textup{L}\bigr)\lra \mathbb{C}^\times$ is called the {\it length function} of the marked projective tetrahedron $T$. The function 
$\textup{A}_T\colon \textup{Q}\bigl(\textup{E}_7^\textup{A}\bigr)\lra \mathbb{C}^\times$ defined by $A_T=\textup{L}_{T^\vee} \circ D$ is called the {\it angle function} of $T.$ 
\end{definition}
It is easy to see that 
\[
\textup{A}_T(e_{ij})=[H_k^\vee, \widetilde{E}_{ij},H_l^\vee,\widetilde{E}_{ji}]
\]
for $k,l\in I$ such that permutation 
$\begin{pmatrix}
	1 & 2 & 3& 4\\
	i & j& k & l
\end{pmatrix}
$ is even.
 We denote 
\begin{align*}
&\textup{L}_{123}\ =\textup{L}_T\left(\frac{e_{12}+e_{13}+e_{23}+e_\varnothing}{2}\right),&
\textup{A}_{123}\ =\textup{A}_T\left(\frac{e_{12}+e_{13}+e_{23}+e_I}{2}\right),\\
&\textup{L}_{124}\ =\textup{L}_T\left(\frac{e_{12}+e_{14}+e_{24}+e_\varnothing}{2}\right),&
\textup{A}_{124}\ =\textup{A}_T\left(\frac{e_{12}+e_{14}+e_{24}+e_I}{2}\right),\\
&\textup{L}_{134}\ =\textup{L}_T\left (\frac{e_{13}+e_{14}+e_{34}+e_\varnothing}{2}\right),&
\textup{A}_{134} \ = \textup{A}_T\left (\frac{e_{13}+e_{14}+e_{34}+e_I}{2}\right),\\
&\textup{L}_{234}\ = \textup{L}_T\left (\frac{e_{23}+e_{24}+e_{34}+e_\varnothing}{2}\right),& 
\textup{A}_{234}\ =\textup{A}_T\left (\frac{e_{23}+e_{24}+e_{34}+e_I}{2}\right),\\
&\textup{L}_{1234}=\textup{L}_T\left (\frac{e_{12}+e_{14}+e_{23}+e_{34}}{2}\right),&
\textup{A}_{1234}=\textup{A}_T\left (\frac{e_{12}+e_{14}+e_{23}+e_{34}}{2}\right),\\
&\textup{L}_{1324}=\textup{L}_T\left (\frac{e_{13}+e_{14}+e_{23}+e_{24}}{2}\right),&
\textup{A}_{1324}=\textup{A}_T\left (\frac{e_{13}+e_{14}+e_{23}+e_{24}}{2}\right),\\
&\textup{L}_{1243}=\textup{L}_T\left (\frac{e_{12}+e_{13}+e_{24}+e_{34}}{2}\right),& 
\textup{A}_{1243}=\textup{A}_T\left (\frac{e_{12}+e_{13}+e_{24}+e_{34}}{2}\right).
\end{align*}
Now we have defined all notions involved in the formulation of Theorem \ref{TheoremProjectiveTetrahedra}.
\begin{definition} \label{DefinitionGeneric}
A projective tetrahedron $T$ is called {\it generic} if the only roots $r\in \textup{R}\bigl(\textup{E}_7^\textup{L}\bigr)$  such that  $\textup{L}_T(r)=1$  are $\pm e_\varnothing.$
\end{definition}

\subsection{Moduli space of generic marked projective tetrahedra} \label{SectionModuliTetrahedra}
For a function $\widetilde{\textup{L}}\colon \left ( \frac{1}{2}\mathbb{Z}\right )^{S_I} \lra \mathbb{C}^{\times}$ such that $\widetilde{\textup{L}}(e_\varnothing)=\widetilde{\textup{L}}(e_I)=1$ consider the determinant 
\be \label{FormulaDeterminant}
\det\bigl(\widetilde{\textup{L}}\bigr)=
\begin{vmatrix}
1 & \dfrac{\widetilde{\textup{L}}(\frac{e_{12}}{2})+\widetilde{\textup{L}}(-\frac{e_{12}}{2})}{2} & \dfrac{\widetilde{\textup{L}}(\frac{e_{13}}{2})+\widetilde{\textup{L}}(-\frac{e_{13}}{2})}{2} & \dfrac{\widetilde{\textup{L}}(\frac{e_{14}}{2})+\widetilde{\textup{L}}(-\frac{e_{14}}{2})}{2} \\
 \dfrac{\widetilde{\textup{L}}(\frac{e_{12}}{2})+\widetilde{\textup{L}}(-\frac{e_{12}}{2})}{2}  & 1 & \dfrac{\widetilde{\textup{L}}(\frac{e_{23}}{2})+\widetilde{\textup{L}}(-\frac{e_{23}}{2})}{2} &\dfrac{\widetilde{\textup{L}}(\frac{e_{24}}{2})+\widetilde{\textup{L}}(-\frac{e_{24}}{2})}{2} \\
 \dfrac{\widetilde{\textup{L}}(\frac{e_{13}}{2})+\widetilde{\textup{L}}(-\frac{e_{13}}{2})}{2}  & \dfrac{\widetilde{\textup{L}}(\frac{e_{23}}{2})+\widetilde{\textup{L}}(-\frac{e_{23}}{2})}{2}  & 1& \dfrac{\widetilde{\textup{L}}(\frac{e_{34}}{2})+\widetilde{\textup{L}}(-\frac{e_{34}}{2})}{2} \\
\dfrac{\widetilde{\textup{L}}(\frac{e_{14}}{2})+\widetilde{\textup{L}}(-\frac{e_{14}}{2})}{2}  & \dfrac{\widetilde{\textup{L}}(\frac{e_{24}}{2})+\widetilde{\textup{L}}(-\frac{e_{24}}{2})}{2}  & \dfrac{\widetilde{\textup{L}}(\frac{e_{34}}{2})+\widetilde{\textup{L}}(-\frac{e_{34}}{2})}{2} & 1\\
\end{vmatrix}.
\ee

\begin{lemma}\label{LemmaDeterminant}
Determinant $\det\bigl(\widetilde{\textup{L}}\bigr)$ depends only on the restriction $\textup{L}$ of $\widetilde{\textup{L}}$ to $\textup{Q}\bigl(\textup{E}_7^\textup{L}\bigr)\subset \left ( \frac{1}{2}\mathbb{Z}\right )^{S_I};$ we denote it by  $\det(\textup{L}).$ We have
\[
\det(\textup{L})=\det(\textup{L}\circ w)
\]
for any  $w\in W(\textup{D}_6).$
\end{lemma}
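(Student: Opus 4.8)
The plan is to prove the two assertions separately. For the first — that the determinant depends only on the restriction $\textup{L}$ to $\textup{Q}(\textup{E}_7^{\textup{L}})$ — I would argue exactly as in the proof that $\textup{L}_T$ is well defined. Each entry of the matrix is the $\tfrac12$-symmetrization $\tfrac12\bigl(\widetilde{\textup{L}}(\tfrac12 e_{ij})+\widetilde{\textup{L}}(-\tfrac12 e_{ij})\bigr)$, which is insensitive to rescaling $\widetilde{\textup{L}}$ by a character of $\bigl(\tfrac12\mathbb{Z}\bigr)^{S_I}/\textup{Q}(\textup{E}_8)$ on each individual $e_{ij}$: replacing $\widetilde{\textup{L}}$ by $\widetilde{\textup{L}}\cdot\chi$ changes $\widetilde{\textup{L}}(\pm\tfrac12 e_{ij})$ by the common factor $\chi(\pm\tfrac12 e_{ij})$, and since $\chi$ factors through $p$ (in the notation of the well-definedness lemma above) one sees $\chi(\tfrac12 e_{ij})=\chi(-\tfrac12 e_{ij})$, so the entry is rescaled by a single scalar $c_{ij}$. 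One then checks that any two extensions $\widetilde{\textup{L}},\widetilde{\textup{L}}'$ of the same $\textup{L}$ differ by such a character, with moreover $c_{ij}=1$ for all $i<j$ because $e_{ij}\in\textup{Q}(\textup{E}_7^{\textup{L}})$ already; hence the matrix, and a fortiori its determinant, is literally unchanged. (Concretely: $\widetilde{\textup{L}}'/\widetilde{\textup{L}}$ is a homomorphism killing $\textup{Q}(\textup{E}_7^{\textup{L}})$, and on the quotient $\bigl(\tfrac12\mathbb{Z}\bigr)^{S_I}/\textup{Q}(\textup{E}_7^{\textup{L}})$ the classes $\pm\tfrac12 e_{ij}$ coincide with the class of $\tfrac12 e_{ij}\bmod\textup{Q}(\textup{E}_8)$ up to the class of $e_I$, on which both $\widetilde{\textup{L}},\widetilde{\textup{L}}'$ take the value $1$ by hypothesis.) This makes $\det(\textup{L})$ meaningful.

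For the $W(\textup{D}_6)$-invariance I would proceed by exhibiting generators of $W(\textup{D}_6)$ and checking that $\det(\textup{L})$ is preserved by each. The group $W(\textup{D}_6)$ is generated by the permutation subgroup $\mathbb{S}_6$ together with one extra reflection — in the $D_6$ language, sign changes in pairs of coordinates — and under the identification with the stabilizer of $\{e_\varnothing,e_I\}$ in $W(\textup{E}_8)$ these act in a combinatorially transparent way on $S_I$: the symmetric group $\mathbb{S}_4$ acting on $I=\{1,2,3,4\}$ permutes the six $e_{ij}$ as the edges of the tetrahedron, and this clearly permutes rows-and-columns of the matrix simultaneously, leaving the determinant fixed; the remaining generators come from the ``$D_4$-triality''-type reflections, i.e. reflections in roots of the form $\tfrac12(e_{ij}+e_{ik}+e_{il}+e_\varnothing)$ and $\tfrac12(e_{ij}+e_{kl}+e_{ik}+e_{jl})$ (the ones appearing in the definitions of $\textup{L}_{ijk}$ and $\textup{L}_{ijkl}$). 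For each such reflection $w$ one computes its action on the $e_{ij}$, deduces how $\textup{L}\circ w$ expresses the symmetrized matrix entries in terms of the old ones, and verifies the identity $\det(\textup{L}\circ w)=\det(\textup{L})$ — this is a finite check, ultimately a polynomial identity in the $6$ variables $t_{ij}=\tfrac12\bigl(\widetilde{\textup{L}}(\tfrac12 e_{ij})+\widetilde{\textup{L}}(-\tfrac12 e_{ij})\bigr)$, or rather in these together with the ``square-root'' data, so one must be a little careful that the relevant expressions are again of symmetrized type.

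The main obstacle is precisely that last point: a general reflection $w\in W(\textup{D}_6)$ does \emph{not} send each $e_{ij}$ to $\pm e_{kl}$, so $\textup{L}\circ w$ evaluated at $\tfrac12 e_{ij}$ is a product of several $\widetilde{\textup{L}}(\tfrac12 e_{**})^{\pm1}$ values, and it is not a priori obvious that the symmetrization $\tfrac12(\textup{L}\circ w(\tfrac12 e_{ij})+\textup{L}\circ w(-\tfrac12 e_{ij}))$ is again expressible through the $t_{**}$. The cleanest way around this — and the route I would actually take — is to recognize $\det(\textup{L})$ intrinsically rather than entrywise: I expect $\det(\textup{L})$ is (up to an elementary normalizing factor) the discriminant-type invariant attached to the eight orthogonal roots, equivalently a quantity governed by the configuration $\textup{L}(T)$ of eight points on $\mathbb{P}^1$, on which $W(\textup{D}_6)$ acts through $\textup{PGL}_2$ in a way that visibly fixes cross-ratio-symmetric combinations; identifying $\det(\textup{L})$ with such a manifestly invariant expression reduces the whole statement to this identification plus the already-easy $\mathbb{S}_4$-invariance. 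I would first do the $\mathbb{S}_4$ part rigorously and the single triality reflection by hand (the case $w=$ reflection in $\tfrac12(e_{13}+e_{14}+e_{23}+e_{24})$, i.e. the Regge reflection, is the one of real interest and a direct computation is feasible), and only then invoke the intrinsic description to close the gap for the remaining generators.
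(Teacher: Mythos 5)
Your first step contains a genuine error. If $\chi=\widetilde{\textup{L}}'/\widetilde{\textup{L}}$ is a character killing $\textup{Q}\bigl(\textup{E}_7^{\textup{L}}\bigr)$, then $e_{ij}\in\textup{Q}\bigl(\textup{E}_7^{\textup{L}}\bigr)$ only gives $\chi\bigl(\tfrac12 e_{ij}\bigr)^2=1$, i.e. $c_{ij}=\pm1$, not $c_{ij}=1$, and the value $-1$ really occurs: the character with $\chi(\tfrac12 e_{12})=\chi(\tfrac12 e_{23})=\chi(\tfrac12 e_{24})=-1$ and $\chi(\tfrac12 e_{s})=1$ for the five remaining $s$ (including $\varnothing$ and $I$) is trivial on every root of $\textup{E}_7^{\textup{L}}$ --- check it on $\pm e_s$ and on the half-sums over the seven affine planes avoiding $I$ --- so two extensions of the same $\textup{L}$ produce matrices differing in the signs of the $(1,2),(2,3),(2,4)$ entries. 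The matrix is therefore not ``literally unchanged''; what saves the determinant is that triviality of $\chi$ on the roots $\tfrac12(e_{ij}+e_{ik}+e_{jk}\pm e_\varnothing)$, together with the normalization $\widetilde{\textup{L}}\bigl(\tfrac12 e_\varnothing\bigr)=1$ built into the definition of $\widetilde{\textup{L}}_T$, forces the sign pattern to be of coboundary type $c_{ij}=d_id_j$, i.e. a conjugation by a diagonal sign matrix --- an argument your write-up does not contain. The normalization is not cosmetic: the character with $\chi(\tfrac12 e_s)=-1$ for all $s\neq I$ also kills $\textup{Q}\bigl(\textup{E}_7^{\textup{L}}\bigr)$ and satisfies the stated condition $\chi(e_\varnothing)=\chi(e_I)=1$, yet it turns the all-ones matrix (determinant $0$) into the matrix with unit diagonal and all off-diagonal entries $-1$ (determinant $-16$), so the first assertion genuinely requires work at the level of half-lattice values.

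For the $W(\textup{D}_6)$-invariance your plan is both incomplete and partly incorrect. The reflections in roots of the form $\tfrac12(e_{ij}+e_{ik}+e_{jk}+e_\varnothing)$ that you list among the generators do not lie in $W(\textup{D}_6)$ at all, since such roots are not orthogonal to $e_\varnothing$ (and the expression $\tfrac12(e_{ij}+e_{ik}+e_{il}+e_\varnothing)$ you wrote is not even a root); the correct generating set is the stabilizer $(\mathbb{Z}/2\mathbb{Z})^6\rtimes\mathbb{S}_4$ of $\{\pm e_{ij}\}$ together with the Regge reflection in $\tfrac12(e_{13}+e_{14}+e_{23}+e_{24})$. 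More importantly, the step you rely on to finish --- recognizing $\det(\textup{L})$ as a manifestly invariant ``discriminant-type'' quantity --- is only conjectured (``I expect\dots''), and it is exactly the missing content. The paper's proof consists precisely of producing such an expression: expanding the determinant gives $\det(\widetilde{\textup{L}})=-\tfrac32+\tfrac18\sum_{r\in\textup{R}(\textup{E}_7^{\textup{L}})\smallsetminus\textup{R}(\textup{D}_6)}\widetilde{\textup{L}}(r)-\tfrac18\sum_{r\in\textup{R}(\textup{D}_6)}\widetilde{\textup{L}}(r)+\tfrac1{64}\sum_{r}\widetilde{\textup{L}}(2r)$, the last sum over $\textup{R}(\textup{E}_8)\smallsetminus\bigl(\textup{R}(\textup{E}_7^{\textup{L}})\cup\textup{R}(\textup{E}_7^{\textup{A}})\bigr)$ where $\widetilde{\textup{L}}(2r)=\textup{L}(\pm e_{ij})\textup{L}(\pm e_{kl})$; since these root sets are $W(\textup{D}_6)$-stable and all terms are values of $\textup{L}$ on $\textup{Q}\bigl(\textup{E}_7^{\textup{L}}\bigr)$, both halves of the lemma follow at once. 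Without that expansion (or a completed generators-plus-invariant argument with a correct generator list), your proposal proves neither assertion.
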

\begin{proof}
Expanding the determinant (\ref{FormulaDeterminant}), we obtain an explicit formula
\[
\det(\widetilde{\textup{L}})=-\frac{3}{2}+\frac{1}{2^3}\sum_{r\in \textup{R}(E_7^\textup{L})\smallsetminus \textup{R}(D_6)}\widetilde{\textup{L}}(r)-\frac{1}{2^3}\sum_{r\in \textup{R}(D_6)}\widetilde{\textup{L}}(r)+\frac{1}{2^6}\sum_{r\in \textup{R}(E_8)\smallsetminus (\textup{R}(E_7^\textup{L}) \cup \textup{R}(E_7^\textup{A}) )}\widetilde{\textup{L}}(2r).
\]
A root $r\in \textup{R}(E_8)\smallsetminus (\textup{R}(E_7^\textup{L}) \cup \textup{R}(E_7^\textup{A}) )$ is equal to $\dfrac{\pm e_{ij}\pm e_{kl} \pm e_{\varnothing} \pm e_I}{2}$ for $\{i,j\}\cup\{k,l\}=I,$  so 
\[
\widetilde{\textup{L}}(2r)=\textup{L}(\pm e_{ij})\textup{L}(\pm e_{kl}).
\] 
From here the first statement of the lemma follows. The second statement follows from the fact that $w\in W(\textup{D}_6)\subset W(\textup{E}_8)$ leaves both sets $\textup{R}\bigl(\textup{E}_7^\textup{L}\bigr)$ and $\textup{R}\bigl(\textup{E}_7^\textup{A}\bigr)$ invariant. 
\end{proof}

Consider a quasi-affine subset $\mathbb{T}$ of $\textup{Hom}\bigl(\textup{Q}\bigl(\textup{E}_7^\textup{L}\bigr),\mathbb{C}^\times\bigr)$ consisting of maps $\textup{L}$ such that $\det(\textup{L})\neq 0$ and  $\textup{L}(r)=1$ for a root $r\in \textup{R}\bigl(\textup{E}_7^{\textup{L}}\bigr)$ if and only if $r=\pm e_\varnothing.$

\begin{proposition}\label{PropositionModuliTetrahedra}
The set of isometry classes of generic marked projective tetrahedra $\textup{M}_{tetr}$ has a structure of an algebraic variety, which is an unramified double cover of  $\mathbb{T}.$ 
\end{proposition}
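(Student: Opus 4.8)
The plan is to exhibit $\mathbb{T}$ itself as the base of a double cover whose total space is $\textup{M}_{tetr}$, by showing that a generic marked projective tetrahedron $T$ is determined, up to a residual $\mathbb{Z}/2$ ambiguity, by its length function $\textup{L}_T \in \mathbb{T}$, and conversely that every point of $\mathbb{T}$ arises this way. First I would set up coordinates: a marked projective tetrahedron is the data of a smooth quadric $Q$, four planes $H_i$, and the combinatorial marking (orientation of $Q$ and of the edges). Modulo $\textup{PGL}_4(\mathbb{C})$ one can normalize the four vertices $A_i$ to be the coordinate points of $\mathbb{P}^3$; then the tetrahedron is encoded by the quadratic form defining $Q$, i.e. by a symmetric $4\times4$ matrix, up to the residual torus of diagonal changes of coordinates and up to scaling. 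Normalizing the diagonal entries of $Q$ to $1$ (possible since $A_i\notin Q$), the off-diagonal entries $q_{ij}$ become the genuine moduli, and they are related to the cross-ratios $[\widetilde A_i, E_{ij}, \widetilde A_j, E_{ji}]$ in a simple way: $\tfrac{1}{2}\bigl(\widetilde{\textup{L}}(\tfrac{e_{ij}}{2})+\widetilde{\textup{L}}(-\tfrac{e_{ij}}{2})\bigr)$ is, up to a standard normalization, exactly $q_{ij}$ (this is why the matrix in \eqref{FormulaDeterminant} is the Gram matrix of $Q$ in the vertex basis and $\det(\textup{L})$ is its determinant).

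Next I would make the double-cover structure explicit. Recording $q_{ij}$ only captures the \emph{unordered} pair $\{E_{ij},E_{ji}\}$ on each edge; the marking additionally orders this pair, i.e. chooses a square root of $[\widetilde A_i,E_{ij},\widetilde A_j,E_{ji}]$ in the appropriate sense. However these six sign choices are not independent: the constraint that $\textup{L}_T$ be a homomorphism on $\textup{Q}(\textup{E}_7^{\textup{L}})$, together with the orientation of $Q$ (which pins down the left/right generators and hence couples the orderings on edges sharing a vertex), cuts the $(\mathbb{Z}/2)^6$ of naive sign choices down to a single residual $\mathbb{Z}/2$. Concretely: once the orientation of $Q$ and the ordering on, say, one edge are fixed, Lemma \ref{LemmaProjectiveTriangle} (Menelaus) forces the orderings on all other edges, leaving only the global flip of the orientation of $Q$ — equivalently, swapping the two rulings — as the deck transformation. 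So the map $\textup{M}_{tetr}\to \mathbb{T}$, $[T]\mapsto \textup{L}_T$, has fibers of cardinality exactly $2$.

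I would then check the map is well-defined onto $\mathbb{T}$ and étale. Well-definedness: $\textup{L}_T$ is manifestly invariant under $\textup{PGL}_4$ and independent of the lifts $\widetilde A_i$ by the lemma already proved; that $\det(\textup{L}_T)\neq 0$ is the genericity/non-degeneracy of $Q$ relative to the vertices (it is the Gram determinant, which vanishes precisely when the quadric is singular or a vertex lies on it), and $\textup{L}_T(r)=1\iff r=\pm e_\varnothing$ is the definition of a generic tetrahedron. Surjectivity: given $\textup{L}\in\mathbb{T}$, reconstruct the quadric from the Gram matrix $(q_{ij})$ with $q_{ii}=1$, $q_{ij}=\tfrac12(\textup{L}(\tfrac{e_{ij}}2)+\textup{L}(-\tfrac{e_{ij}}2))$ wait—one must verify this matrix is well-defined from $\textup{L}$ alone, which is exactly the content of Lemma \ref{LemmaDeterminant}'s proof (the symmetric functions of the two square roots on each edge are expressible through $\textup{L}$ on $\textup{E}_7^{\textup{L}}$); $\det(\textup{L})\neq0$ then guarantees $Q$ is smooth and no vertex is on it, and the remaining conics $Q\cap H_i$ are automatically smooth on a dense open set — here one must be slightly careful that the "conics $Q\cap H_i$ smooth" condition in Definition \ref{DefinitionProjectiveTetrahedra} is open and non-empty, so it holds on all of the quasi-affine $\mathbb{T}$ after possibly shrinking, or is automatic from $\det\neq0$. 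Finally, for the algebraic/étale structure, I would observe that the reconstruction $\textup{L}\mapsto(q_{ij})\mapsto(Q,\mathcal H)$ is algebraic, and the two orderings are interchanged by a fixed-point-free involution (no generic tetrahedron is fixed by swapping the rulings, since that would force a symmetry incompatible with genericity), so $\textup{M}_{tetr}\to\mathbb T$ is an unramified double cover, endowing $\textup{M}_{tetr}$ with its variety structure as the normalization of $\mathbb T$ in the corresponding quadratic extension of function fields.

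The main obstacle I anticipate is the bookkeeping in the second paragraph: precisely proving that the homomorphism constraint on $\textup{Q}(\textup{E}_7^{\textup{L}})$ plus the orientation of $Q$ collapses $(\mathbb{Z}/2)^6$ of edge-orderings to a single $\mathbb{Z}/2$, rather than to $(\mathbb{Z}/2)^k$ for some $k>1$ or to nothing. This requires identifying exactly which sign patterns on $\{e_{ij}/2\}$ extend to characters of the $E_7^{\textup{L}}$ lattice (a computation in $\textup{Hom}((\mathbb Z/2)^{S_I}, \mathbb Z/2)$ modulo the sublattice $\textup{Q}(\textup{E}_7^{\textup{L}})$), and then matching the resulting torsor with the geometric ambiguity coming from the two rulings of $Q$ via the explicit formula $E_{ij}'=L_{E_{ij}}\cap R_{E_{ji}}$. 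The rest — the Gram-matrix dictionary, smoothness from non-vanishing determinant, and algebraicity of the reconstruction — is routine.
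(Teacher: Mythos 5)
Your overall framework --- normalizing the vertices to the coordinate points of $\mathbb{P}^3$, reading the matrix of $Q$ off the symmetrized values so that $\det(\textup{L}_T)$ becomes the Gram determinant of $Q$, proving $\det\neq 0$ from smoothness, and reconstructing the quadric algebraically from a point of $\mathbb{T}$ --- is exactly the paper's route. The gap is in the step you yourself flag as the main obstacle, and the mechanism you propose there does not work. A point of $\mathbb{T}$ is not merely the Gram data $q_{ij}$: it is the full character $\textup{L}$ on $\textup{Q}\bigl(\textup{E}_7^{\textup{L}}\bigr)$, so it already records the values $\textup{L}(e_{ij})$ themselves (not only $\textup{L}(e_{ij})+\textup{L}(e_{ij})^{-1}$) as well as the values on the half-integer ``face'' roots. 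Hence the fiber over $\textup{L}$ is not a $(\mathbb{Z}/2)^6$ of edge-ordering signs to be cut down by a homomorphism constraint ($\textup{L}_T$ is a homomorphism by construction, so there is no such constraint to impose): the requirement $\textup{L}_T=\textup{L}$ already pins down the labelling of the points $E_{ij}$, which is what the paper establishes by computing the coordinates of the $E_{ij}$ explicitly from $\textup{L}$ and exhibiting a canonical labelling realizing $\textup{L}_T=\textup{L}$. Moreover, the coupling you invoke is illusory: the orientation of $Q$ (the choice of rulings) enters only through the dual marking and the angle function and plays no role in the definition of $\textup{L}_T$ or in the edge orderings, so it cannot ``couple the orderings on edges sharing a vertex''; likewise Lemma \ref{LemmaProjectiveTriangle} computes a product of cross-ratios on the double cover $\widetilde{Q}$ and forces no orderings, and ordering the pair $\{E_{ij},E_{ji}\}$ inverts a cross-ratio rather than choosing a square root (the square roots come from the lifts $\widetilde{A}_i$, which an earlier lemma shows are immaterial).

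The correct count is simpler than the torsor computation you anticipate: given $\textup{L}\in\mathbb{T}$, the configuration $(Q,\mathcal{H})$ with normalized vertices and the labelled points $E_{ij}$ are determined by $\textup{L}$, while the orientation of $Q$ is invisible to $\textup{L}_T$ and is the single residual binary choice; this yields exactly two marked tetrahedra in each fiber, with the ruling swap as deck transformation --- so your final identification of the $\mathbb{Z}/2$ is right, but not for the reason you give, and as written the key degree-two claim is unproved. The remaining points of your proposal (well-definedness, surjectivity via the explicit quadric, and the caveat about smoothness of the conics $Q\cap H_i$ for the reconstructed configuration) agree with the paper's argument.
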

\begin{proof}
 Consider a map $\textup{M}_{tetr}\lra  \textup{Hom}\bigl(\textup{Q} \bigl(\textup{E}_7^\textup{L}\bigr),\mathbb{C}^\times\bigr)$ sending a tetrahedron $T$ to its length function $\textup{L}_T.$ Fix homogeneous coordinates in $\mathbb{P}^3$ so that 
\[
A_1=[1,0,0,0],\ A_2=[0,1,0,0],\  A_3=[0,0,1,0],\ A_4=[0,0,0,1].
\]
Let $q=\sum a_{ij}x_i x_j$ be an equation of $Q.$ Since $A_i \not\in Q$ we have $a_{ii}\neq 0.$ After a change of coordinates, we can assume that $a_{ii}=1$ for $\ i \in \{1,2,3,4\}.$  Then $\det\bigl(\textup{L}_T\bigr)$ is the determinant of the matrix of $Q$ and so  $\det\bigl(\textup{L}_T\bigr)\neq 0,$ because $Q$ is smooth. It follows that  $\textup{L}_T\in \mathbb{T}.$

Given a point $\textup{L}\in \mathbb{T}$ consider a projective tetrahedron $T$ with vertices 
\[
A_1=[1,0,0,0],\ A_2=[0,1,0,0],\  A_3=[0,0,1,0],\ A_4=[0,0,0,1].
\]
and a quadric 
\begin{align*}
	& x_1^2+ \textup{L}(e_{12}) x_2^2+\textup{L}(e_{13})x_3^2+ \textup{L}(e_{14})x_4^2\\
	-&\left(\textup{L}\left(e_{12}\right)+1\right)x_1x_2+
	\left (\textup{L}\left(e_{13}\right)+1\right)x_1x_3+
	\left(\textup{L}\left(e_{14}\right)+1\right)x_1x_4\\ 
	+&\left(\textup{L}\left(\frac{e_{12}+e_{13}+e_{23}+e_{\varnothing}}{2}\right)+\textup{L}\left ( \frac{e_{12}+e_{13}-e_{23}+e_{\varnothing}}{2}\right)\right)x_2x_3\\ 
	+&\left(\textup{L}\left(\frac{e_{12}+e_{14}+e_{24}+e_{\varnothing}}{2}\right)+\textup{L}\left(\frac{e_{12}+e_{14}-e_{24}+e_{\varnothing}}{2}\right)\right)x_2x_4\\ 
	+&\left(\textup{L}\left(\frac{e_{13}+e_{14}+e_{34}+e_{\varnothing}}{2}\right)+\textup{L}\left(\frac{e_{13}+e_{14}-e_{34}+e_{\varnothing}}{2}\right)\right)x_3x_4=0.
\end{align*}
It is easy to see that coordinates of points $E_{ij}$ could be computed explicitly from the values of $\textup{L}$ on the roots and there exists a canonical labelling of these points so that $\textup{L}_T=\textup{L}.$ Thus for every point $\textup{L}\in \mathbb{T}$ there exist exactly two marked projective tetrahedra with $\textup{L}_T=\textup{L}$ with different orientations of $Q$.
From here the statement follows.
 \end{proof}

The group $W(\textup{D}_6)$ acts on $\mathbb{T}$ and this action extends to the action on $\textup{M}_{tetr}.$  The stabilizer of the set of twelve roots $\pm e_{ij}$  is isomorphic to $\left(\mathbb{Z}/2\mathbb{Z}\right)^6 \rtimes \mathbb{S}_4$ and acts by changing the marking of a generic projective tetrahedron. The  group $W(\textup{D}_6)$ is generated by this subgroup and a Regge symmetry.

\section{Rational elliptic surfaces and period maps}\label{SecS}
\subsection{Rational elliptic surfaces}\label{SectionRationalElliptic}

A rational elliptic surface is a smooth complete rational surface $X,$ which admits an elliptic fibration, see \cite{HL02}, \cite{Man86},  \cite{SS10}. The elliptic fibration on $X$ is unique and is given by the anti-canonical linear system $|-K_X|$. We assume that the elliptic fibration is relatively minimal and has a section. It is known that every rational elliptic surface can be obtained by blowing up the nine base points of a pencil of plane cubic curves having at least one smooth member.  

The Picard lattice $\textup{Pic}(X)$ has rank $10$ and signature $(1,9).$ Denote by $f$ the class of a fiber, which is equal to $-K_{X}.$ The orthogonal complement $f^{\perp}\subset \textup{Pic}(X)$ contains an isotropic vector $f$ and is an affine root lattice of type $E_8^1$. The quotient $f^{\perp}/f$  is a lattice of type $E_8$; we have a projection 
\[
\pi\colon f^{\perp}  \lra f^{\perp}/f.
\] 
An element $r \in \textup{Pic}(X)$ is called a root if $r\in f^{\perp} $ and $r^2=-2.$ If $s_0$ is a section of $X$ then the lattice $\langle s_0, f \rangle$ is unimodular so
\[
\textup{Pic}(X)=\langle s_0, f \rangle \oplus \langle s_0, f \rangle^{\perp}.
\]
The map $\langle s_0, f \rangle^{\perp} \lra f^{\perp}/ f$ is an isometry. 

From the adjunction formula it follows that the self-intersection number of a smooth rational curve $C$ on a rational elliptic surface is greater or equal than $-2.$ Smooth rational curves with self-intersection number~$-1$ are  sections of the elliptic fibration. Smooth rational curves with self-intersection number~$-2$ are irreducible components of reducible fibers of the elliptic fibration. The classification of singular fibers of an elliptic fibration goes back to Kodaira. The Euler characteristic of a rational elliptic surface is equal to~$12,$ so it can have at most twelve singular fibers. The simplest type of a singular fiber is called $I_n;$ such fiber is a ``wheel'' made up of $n$ smooth rational curves intersecting transversally.

\begin{definition}
Let $X$ be a rational elliptic surface with a pair $F_1, F_2$ of singular fibers of type $I_2$.   	We call a triple $(X, F_1, F_2)$ a  {\it $D_6$-surface}. A $D_6$-surface is called {\it} generic if $F_1$ and $F_2$ are the only reducible fibers of the elliptic fibration.
\end{definition}

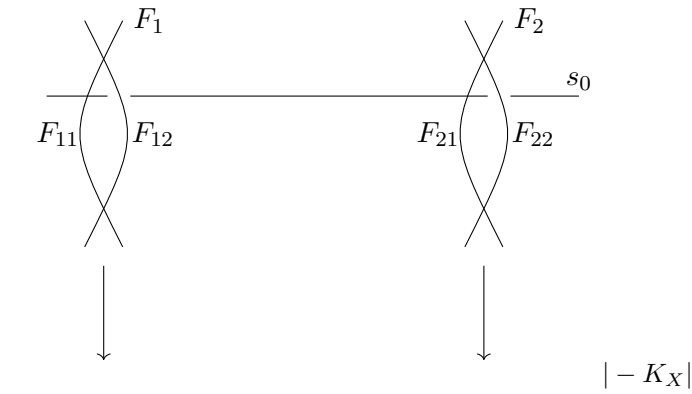
\begin{figure}
\centering
\begin{tikzpicture}
\draw (2,1) .. controls (2.75,2.5) and (2.75,2.5) .. (2,4);
\draw (2.5,1) .. controls (1.75,2.5) and (1.75,2.5) .. (2.5,4);
\draw (7,1) .. controls (7.75,2.5) and (7.75,2.5) .. (7,4);
\draw (7.5,1) .. controls (6.75,2.5) and (6.75,2.5) .. (7.5,4);
\draw (1,-1) -- (9,-1);
\draw (1.5,3) -- (2.3,3);
\draw (2.6,3) -- (7.3,3);
\draw (7.6,3) -- (8.5,3);
\draw[->] (2.25,0.75) -- (2.25,-0.5);
\draw[->] (7.25,0.75) -- (7.25,-0.5);
\draw (2.85,4) node{$F_1$};
\draw (2.9,2.5) node{$F_{12}$};
\draw (1.65,2.5) node{$F_{11}$};
\draw (7.85,4) node{$F_2$};
\draw (7.9,2.5) node{$F_{22}$};
\draw (6.65,2.5) node{$F_{21}$};
\draw (8.5,3.2) node{$s_0$};
\draw (9.4,-0.7) node{\mbox{\small$|-K_X|$}};
\end{tikzpicture}
\caption{The elliptic fibration on a $D_6$-surface with fibers $F_1$ and $F_2$ of type $I_2$.}
\label{FigureEllipticFibration}
\end{figure}

The term ``$D_6$-surface'' is justified by the fact that the orthogonal complement in  $f^{\perp}/f$ to the classes of the components of the fibers $F_1$ and $F_2$ is a root lattice of type $D_6.$ Recall that in \S \ref{SectionE8RootSystem} we defined a root system $\textup{E}_8.$

\begin{definition} \label{DefinitionMarking}
A marking of a $D_6$-surface $X$ consists of the following data.
\begin{enumerate}
\item A choice of a section $s_0$ of the elliptic  fibration (zero section).
\item An isometry $m\colon \textup{Q}(\textup{E}_8)\lra f^{\perp}/f$ sending $e_I$ and $e_{\varnothing}$ to the classes of components of fibers $F_1$ and $F_2,$ which intersect $s_0.$
\item A choice of a nodal point in $F_1$ and in $F_2$.
\end{enumerate}
The Weyl group $W(\textup{D}_6)$ acts on the set of markings by changing the isomorphism $m.$
\end{definition}

We will usually omit $m$ from the notation and simply put $\textup{Q}(\textup{E}_8)=f^{\perp}/f.$	We denote the components of $F_1$ and $F_2$ intersecting $s_0$ by  $F_{11}$ and $F_{21}.$ The other components are denoted  $F_{12}$ and $F_{22},$ see Figure~\ref{FigureEllipticFibration}. Thus
\begin{align*}
\pi\left(\left[F_{11}\right]\right)&=e_I,\\ 
\pi\left(\left[F_{12}\right]\right)&=-e_I,\\
\pi\left(\left[F_{21}\right]\right)&=e_{\varnothing},\\ 
\pi\left(\left[F_{22}\right]\right)&=-e_{\varnothing}.
\end{align*}
The first nodal point of $F_1$ is denoted by $F_{1}^x,$ the second is denoted by $F_{1}^y$, and similarly for $F_2$.

\subsection{Period maps}\label{secL}
In this section we adapt the ideas of \cite[\S 5]{Loo81} to the case of a marked $D_6$-surface $X$. A choice of a nodal point in $F_1$ and $F_2$ fixes isomorphisms 
\[
\textup{Pic}^0(F_1)\cong  \C^{\times}, \ \textup{Pic}^0(F_2) \cong \C^{\times}.
\] 
Let $d\in \textup{Pic}(X)$ be a line bundle, which restricts trivially to both components of $F_1.$ In this case $d\in f^{\perp}$ and $\pi(d)\in \textup{Q}\bigl(\textup{E}_7^\textup{L}\bigr).$ The restriction of $d$ to $F_1$ determines an element $\textup{Res}_{F_1}(d)\in\textup{Pic}^0(F_1).$ Since the restriction of $f\in f^{\perp}$ is trivial, the image of $d$ in $\textup{Pic}^0(F_1)$ depends only on  $\pi(d)\in \textup{Q}\bigl(E_7^\textup{L}\bigr).$  Thus we have constructed a {\it period map}
\[
\textup{Res}_{F_1} \colon \textup{Q}\bigl(\textup{E}_7^\textup{L}\bigr) \lra \C^{\times}.
\]
Similarly, by restricting to $F_2$ we define a second period map
\[
\textup{Res}_{F_2} \colon \textup{Q}\bigl(\textup{E}_7^\textup{A}\bigr)\lra \C^{\times}.
\]

Here is a more explicit description of the map $\textup{Res}_{F_1}$ (similar statements hold for $\textup{Res}_{F_2}$). Consider a root $r\in \textup{Pic}(X),$ which can be represented as a difference of classes of sections $s_1$ and $s_2$ intersecting the fiber $F_1$ in the same component. If this component is $F_{11},$ then
\begin{align}\label{FormulaExplicitPeriod}
\textup{Res}_{F_1}(r)=\textup{Res}_{F_1} \left([s_1]-[s_2]\right)=\left[F_{1}^x,s_1 \cap F_{11}, F_{1}^y, s_2 \cap F_{11}\right]_{F_{11}}.
\end{align}
If this component is  $F_{12},$ then
\begin{align}\label{FormulaExplicitPeriod2}
\textup{Res}_{F_1}(r)=\textup{Res}_{F_1}\left([s_1]-[s_2]\right)=\left[F_{1}^y,s_1 \cap F_{12}, F_{1}^x, s_2 \cap F_{12}\right]_{F_{12}}.
\end{align}

\begin{lemma} \label{LemmaGenericSurface}
Let $X$ be a $D_6-$surface. The following conditions are equivalent.
\begin{enumerate}
\item $X$ is generic.
\item We have $\textup{Res}_{F_1}(r)=1$ for a root $r \in R\bigl(\textup{E}_7^\textup{L}\bigr)$ if and only if $r=\pm e_{\varnothing}.$
\item We have $\textup{Res}_{F_2}(r)=1$ for a root  $r \in R\bigl(\textup{E}_7^\textup{A}\bigr)$ if and only if $r=\pm e_{I}.$
\end{enumerate}
\end{lemma}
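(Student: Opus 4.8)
The plan is to prove the chain of equivalences by showing that a $D_6$-surface $X$ fails to be generic precisely when the period map $\textup{Res}_{F_1}$ (and, by a symmetric argument, $\textup{Res}_{F_2}$) takes the value $1$ on some root different from $\pm e_\varnothing$. The non-trivial content is to relate the existence of an extra reducible fiber of the elliptic fibration to a vanishing period. First I would recall that the irreducible components of the reducible fibers of $X$ are exactly the smooth rational curves of self-intersection $-2$, and that their classes lie in $f^\perp$ and project to roots in $\textup{Q}(\textup{E}_8)$; this is already in \S\ref{SectionRationalElliptic}. So $X$ is generic if and only if the only such classes coming from fiber components are $\pm e_I$ and $\pm e_\varnothing$ (the components of $F_1$ and $F_2$).

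Next I would establish the bridge between ``$\textup{Res}_{F_1}(r) = 1$ for some root $r \in \textup{R}(\textup{E}_7^{\textup{L}})$, $r \neq \pm e_\varnothing$'' and the existence of an unexpected $(-2)$-curve. The key point is that a root $r\in \textup{Q}(\textup{E}_7^{\textup{L}})$ restricting trivially to $F_1$ can be written as $[s_1]-[s_2]$ for two sections meeting $F_1$ in the same component (using that $\langle s_0,f\rangle^\perp \cong f^\perp/f$ and the standard fact that roots in a rational elliptic surface are realised as differences of sections through the zero component); then by formula~(\ref{FormulaExplicitPeriod}) the condition $\textup{Res}_{F_1}(r)=1$ says the four points $F_1^x$, $s_1\cap F_{11}$, $F_1^y$, $s_2\cap F_{11}$ have cross-ratio $1$, i.e. $s_1$ and $s_2$ meet $F_{11}$ at the same point. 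In that case the divisor class $[s_1]-[s_2]$ is represented by an effective cycle supported on a fiber (the two sections coincide over that fiber, forcing a degeneration), and one extracts a new $(-2)$-curve, hence an extra reducible fiber. Conversely, an extra reducible fiber $F_3$ gives a component whose class $v$ projects to a root in $\textup{Q}(\textup{E}_8)$; since $F_3$ is disjoint from $F_1$ and $F_2$, this root lies in $\textup{Q}(\textup{D}_6)\subset \textup{Q}(\textup{E}_7^{\textup{L}})$ and, being supported on a fiber, restricts trivially to $F_1$, so $\textup{Res}_{F_1}$ sends the corresponding difference of sections to $1$ on a root $\neq\pm e_\varnothing$. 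The equivalence of (1) and (3) is the same argument with the roles of $F_1$ and $F_2$ (equivalently $\textup{E}_7^{\textup{L}}$ and $\textup{E}_7^{\textup{A}}$, swapped by $D$) interchanged.

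The main obstacle I anticipate is making the ``$\textup{Res}_{F_1}(r)=1 \Rightarrow$ extra reducible fiber'' direction airtight: one must argue that two distinct sections of a rational elliptic surface meeting a fiber $F$ at a common point force $F$ to be reducible. The cleanest route is to note that $s_1+F$ and $s_2$ are both effective of the same class restricting appropriately, or better, to use Riemann--Roch / the structure of $|-K_X|$: the linear system separating $s_1$ from $s_2$ on the smooth fibers but not on $F$ must have $F$ in its base locus with a reducible member, equivalently $r=[s_1]-[s_2]$ being a root with $\textup{Res}_{F_1}(r)=1$ means $r$ lies in the span of the components of the fiber over that point (an orthogonality-and-effectivity argument in the lattice $f^\perp$). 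I would spell this out via the standard dictionary between roots of the elliptic surface, effective $(-2)$-classes, and components of singular fibers, citing \cite{Loo81} and the lattice description in \S\ref{SectionRationalElliptic}. Everything else — the explicit root computations, the identification $\textup{R}(\textup{D}_6) = \textup{R}(\textup{E}_7^{\textup{L}})\cap\textup{R}(\textup{E}_7^{\textup{A}})$, and the symmetry under $D$ — is routine given the setup already in place.
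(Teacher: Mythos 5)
The easy implications are fine and coincide with the paper's: a component $D$ of an extra reducible fiber is disjoint from $F_1$ and $F_2$, so $\pi([D])\in\textup{R}(\textup{D}_6)$ restricts trivially, proving (2)$\Rightarrow$(1) and (3)$\Rightarrow$(1). The gap is in your bridge for (1)$\Rightarrow$(2). First, the ``standard fact'' you invoke --- that every root of $\textup{E}_7^{\textup{L}}$ restricting trivially to $F_1$ is $\pi([s_1]-[s_2])$ for two sections through a common component of $F_1$ --- is false without genericity, and genericity is exactly what you may not assume in this direction. Indeed, if $r=\pi([D])$ with $D$ a component of an extra reducible fiber (the very roots you must detect), then $[s_1]-[s_2]=[D]+nf$ paired with $[D]$ gives a value in $\{-1,0,1\}$ on the left but $-2$ on the right, so such $r$ is never a difference of sections. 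Second, the degeneration step is wrong: two distinct sections meeting at a point of a fiber do not force that fiber to be reducible (on a rational elliptic surface all of whose fibers are irreducible, any two sections with positive intersection number meet at smooth points of irreducible fibers). Worse, in your situation the scenario is empty: if $\pi([s_1]-[s_2])$ is a root then $([s_1]-[s_2])^2=-2$ forces $s_1\cdot s_2=0$, so the sections are disjoint and can never meet $F_{11}$ at a common point. Thus your argument, made precise, only shows $\textup{Res}_{F_1}(r)\neq 1$ for roots realizable as differences of sections, and says nothing about the remaining roots --- which are exactly the fiber-supported classes on which the period does equal $1$. The statement you flag as the ``main obstacle'' (sections through a common point of a fiber force the fiber to be reducible) is false, so that route cannot be made airtight by Riemann--Roch or otherwise.

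The paper closes this gap by citing \cite[Proposition 5.2]{Loo81}: a root $r\in\textup{R}\bigl(\textup{E}_7^\textup{L}\bigr)$ with $\textup{Res}_{F_1}(r)=1$ and $r\neq\pm e_\varnothing$ is necessarily $\pi([D])$ for $D$ a component of a reducible fiber distinct from $F_1,F_2$, which contradicts genericity at once; this is the precise statement your sketch is missing. If you prefer to keep the section-theoretic picture, the correct way to run it is in the contrapositive: assume $X$ generic, prove (using $\textup{MW}(X)\cong \textup{Q}(\textup{E}_8)/\langle e_I,e_\varnothing\rangle$ together with the dictionary between Mordell--Weil classes and the fiber components a section meets) that every root $r\in\textup{R}\bigl(\textup{E}_7^\textup{L}\bigr)$ with $r\neq\pm e_\varnothing$ is a difference of two disjoint sections through a common component of $F_1$, and then conclude from (\ref{FormulaExplicitPeriod}) that the cross-ratio cannot be $1$. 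That realizability statement is where the genericity hypothesis genuinely enters and it requires proof; it is not the unconditional fact you assert.
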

\begin{proof}
	Let $D$ be an irreducible component of a reducible fiber, distinct from $F_1$ and $F_2.$ Then $\pi([D])\in \textup{R}(\textup{D}_6)$ and we have
\[
\textup{Res}_{F_1}([D])=\textup{Res}_{F_2} ([D])=1.
\]
This proves that {\it 2} implies {\it 1} and that  {\it 3} implies {\it 1}. To show that {\it 1} implies {\it 2} assume that there exists a root $r\in \textup{R}\bigl(\textup{E}_7^\textup{L}\bigr)$ such that  $\textup{Res}_{F_1}(r)=1$ and $r\neq \pm e_{\varnothing}$.  By \cite[Proposition 5.2]{Loo81}  there exists a component $D$ of a reducible fiber $F\neq F_1, F_2$ of the elliptic fibration such that $r=\pi([D]).$ Thus $X$ is not generic. Similarly,  {\it 1} implies {\it 3}.
\end{proof}

\subsection{Admissible conic bundles}\label{SectionConicBundle}

\begin{definition}
A conic bundle on a $D_6$-surface $X$ is a surjective morphism
$
b\colon X \lra \mathbb{P}^1
$
such that the generic fiber is a smooth rational curve. The conic bundle  $b$ is called {\it admissible} if the irreducible components  of fibers $F_1$ and $F_2$  are sections of $b.$ The fiber $b^{-1}(p)$ will be denoted $b_p$ and its class $[b_{p}] \in \textup{Pic}(X)$ will be denoted by $B.$ 
\end{definition}

\begin{lemma}
	Every singular fiber of an admissible conic bundle on a $D_6$-surface $X$ is a chain of smooth rational curves 
	$Y_1,  \ldots, Y_k$ for $k\geq 2,$ where $Y_1, Y_k$ are $(-1)$-curves and $Y_2, \ldots, Y_{k-1}$  are $(-2)-$curves. 
\label{fib}
\end{lemma}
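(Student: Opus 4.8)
The plan is to analyse a singular fiber $b_p=\sum_i m_iY_i$ by intersection theory on $X$, the decisive input from admissibility being a restriction on which curves the sections $F_{jk}$ of $b$ can meet.

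\textbf{Preliminaries.} Since $b$ has connected fibers over the smooth curve $\mathbb{P}^1$, the fiber $b_p$ is connected with $p_a(b_p)=0$; hence its components are smooth rational curves and, by Zariski's lemma, $b_p\cdot Y_i=0$ for each $i$ while the intersection form on the components is negative semidefinite with radical $\mathbb{Z}[b_p]$. From $b_p^2=0$ and adjunction, $-K_X\cdot b_p=2$. If some $Y_i^2\ge 0$, then $b_p\cdot Y_i=0$ forces $b_p=mY_i$ with $Y_i^2=0$ and $m\in\{1,2\}$; the value $m=2$ is excluded because each section $F_{jk}$ meets $b_p$ with multiplicity $m(F_{jk}\cdot Y_i)$, which cannot equal $1$, and $m=1$ makes $b_p$ smooth. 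Since $X$ is a rational elliptic surface it has no irreducible curve of self-intersection $\le -3$, so $Y_i^2\ge -2$; thus every component is a $(-1)$-curve (a section of the elliptic fibration $\varphi$) or a $(-2)$-curve (a component of a fiber of $\varphi$), and $-K_X\cdot b_p=2$ says the $(-1)$-components have total multiplicity $2$.

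\textbf{The key observation.} No section $F_{jk}$ of $b$ meets a $(-2)$-component of $b_p$: if $F_{jk}$ met such a component $Z\subset b_p$, then $Z$ would lie on the same fiber of $\varphi$ as $F_{jk}$, namely $F_1$ (or $F_2$); since $F_1=F_{11}+F_{12}$ has only two components, $Z$ would be one of $F_{11},F_{12}$ — impossible, as these are sections of $b$ and hence not contained in the fiber $b_p$. Consequently $F_{jk}\cdot b_p=\sum m_Y(F_{jk}\cdot Y)$ runs over $(-1)$-components $Y$ only, so, being $1$, it forces $b_p$ to have a $(-1)$-component of multiplicity $1$. In particular $b_p$ cannot consist of a single $(-1)$-component of multiplicity $2$; call this property $(\ast)$ of the fiber over $p$ on $X$.

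\textbf{Contraction and reconstruction.} Contracting $(-1)$-curves contained in the fiber over $p$ strictly decreases its total multiplicity and can be continued as long as that fiber is reducible (it always contains a $(-1)$-component); so we obtain $\sigma\colon X=X_0\to X_1\to\cdots\to X_N=X'$, a composition of contractions of $(-1)$-curves, with the fiber of the induced conic bundle $b'$ over $p$ a smooth rational curve. Two facts persist along the tower: (i) no $X_j$ has an irreducible curve of self-intersection $\le -3$, since $X_0$ has none and contractions only raise self-intersections; and (ii) the fiber over $p$ in each $X_j$ has all components of self-intersection in $\{-1,-2\}$, by the argument of the Preliminaries applied to $X_j$ (the images of the $F_{jk}$ are still sections of the conic bundle on $X_j$). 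Reading the tower upwards, $X_j=\mathrm{Bl}_{q_j}(X_{j+1})$ with $q_j$ a point of (the strict transform of) the fiber over $p$; using (i) and (ii) one checks that the fiber over $p$ in $X_j$ is always either a smooth $\mathbb{P}^1$, or a multiplicity-one chain $(-1)-(-2)-\cdots-(-2)-(-1)$, or a configuration with exactly one $(-1)$-component of multiplicity $2$. Indeed a blow-up at an interior point of a $(-2)$-curve is forbidden by (i), so $q_j$ lies on a $(-1)$-component; blowing up an interior point of a $(-1)$-end of a chain lengthens the chain; blowing up the node of a length-two chain produces the third type; and once in the third type, (i) confines all further blow-ups to interior points of the unique $(-1)$-curve, preserving that type. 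At $X_0=X$ the third type is excluded by $(\ast)$ and the first type is excluded because $b_p$ is singular; hence $b_p$ is a chain as asserted, with $k\ge 2$ since it is reducible.

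The main point where care is needed is the key observation together with the finite case-check in the last paragraph; everything else is routine. One could alternatively bypass the reconstruction step by the numerical remark that, for the two-leaf configuration forced by the key observation, the requirement that the intersection form have one-dimensional radical already rules out branchings and higher multiplicities, but the blow-up argument seems cleaner.
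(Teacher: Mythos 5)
Your proof is correct, but it takes a genuinely different route from the paper: the paper disposes of this lemma in one line by invoking the classification of singular fibers of conic bundles on rational elliptic surfaces from \cite[Proposition 5.1]{GS19} (admissibility then excludes the non-chain types), whereas you give a self-contained argument. Your two inputs are sound: (a) on a rational elliptic surface every component of a fiber of $b$ is a $(-1)$- or $(-2)$-curve and the $(-1)$-components have total multiplicity $2$ (from $-K_X\cdot B=2$, Zariski's lemma, and the absence of curves of self-intersection $\le -3$); and (b) the admissibility hypothesis forces a multiplicity-one $(-1)$-component, since a $(-2)$-curve meeting $F_{jk}$ would have to be $F_{11},F_{12},F_{21}$ or $F_{22}$, which are sections of $b$ and hence not fiber components. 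The blow-down induction correctly shows that the only fiber types compatible with the $\le -3$ obstruction are the smooth fiber, the chain $(-1)$--$(-2)$--$\cdots$--$(-2)$--$(-1)$, and the configurations with a unique $(-1)$-component of multiplicity $2$; property $(\ast)$ kills the last type and singularity kills the first. (A small simplification: in your Preliminaries, adjunction alone already rules out $b_p=2Y_i$ with $Y_i^2=0$, since it would give $-K_X\cdot b_p=4$; the appeal to the sections there is unnecessary.) What your approach buys is independence from the external reference and a transparent identification of exactly where admissibility enters; what the paper's citation buys is brevity and the full list of degenerate fiber types, which is reused implicitly elsewhere (e.g.\ in the Euler characteristic count of singular fibers in \S\ref{SectionConicBundle}).
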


\begin{proof}
This result follows immediately from the classification of singular fibers of conic bundles on rational elliptic surfaces, see \cite[Proposition $5.1$]{GS19}. 
\end{proof}

Assume that $b\colon X\lra \mathbb{P}^1$ is an admissible conic bundle on $X.$ Fix a zero section $s_0$ on $X$ and label the components of fibers $F_1$ and $F_2$ as in \S \ref{SectionRationalElliptic}. A computation of Euler characteristic shows that a conic bundle $b\colon X\lra \mathbb{P}^1$ has at most eight singular fibers. 
 Since curves  $F_{11}, F_{12},F_{21},F_{22}$ are sections of the conic bundle, for every singular fiber $b_p=Y_1\cup \ldots \cup Y_k$  the points $Y_1\cap F_1$ lie in different components of $F_1$ and the points $Y_1\cap F_2$ lie in different components of $F_2$; similarly for $Y_2$. We denote the component of $b_p$ intersecting $F_{11}$ by $b_p^{1}$ and the component intersecting $F_{21}$ by $b_p^{2}.$ Notice that $b_p^1$ and $b_p^{2}$ are not necessarily distinct.

\begin{definition}
The conic bundle function 
$C_{X,b}\in \mathbb{C}(\mathbb{P}^1)$ is a rational function on the target space of $b$ which associates to a point $p\in \mathbb{P}^1$ the cross-ratio of the four points in which the fiber $b_p$ intersects $F_1$ and $F_2,$ namely
\begin{align}\label{FormulaConicBundle}
C_{X,b}(p)=\left[b_p\cap F_{11},b_p\cap F_{21},b_p\cap F_{12},b_p\cap F_{22}\right]_{b_p}.	 
\end{align}
\end{definition}

\begin{lemma} 
\begin{enumerate}
\item $C_{X,b}\in \mathbb{C}(\mathbb{P}^1)$ is a rational function of degree $4.$ 
\item  Assume that fiber $b_p$ has $k$ irreducible components. Then
\[
\textup{ord}_p(C_{X,b})=
\begin{cases}
0 & \text{ if } k=1,\\	
k-1 & \text{ if } k>1 \text{ and }b_p^1=b_p^2,\\	
-(k-1)& \text{ if }k>1 \text{ and }b_p^1\neq b_p^2.\\	
\end{cases}
\]  
\item $C_{X,b}$ takes the value $1$ at four points 
\[
b\left(F_{1}^x\right), b\left(F_{1}^y\right), b\left(F_2^x\right), b\left(F_{2}^y\right)\in \mathbb{P}^1.
\] 
\end{enumerate}
\label{LemmaConicBundleFunction}  
\end{lemma}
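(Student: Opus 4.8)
The plan is to prove the three parts of Lemma~\ref{LemmaConicBundleFunction} by analyzing the divisor of $C_{X,b}$ directly from its definition~(\ref{FormulaConicBundle}), using the fact that the four sections $F_{11},F_{12},F_{21},F_{22}$ of $b$ are pairwise disjoint outside of $F_1$ and $F_2$ and that the fibers of $b$ are chains of rational curves by Lemma~\ref{fib}.

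First I would establish part~(2), which controls everything else. Fix a point $p\in\mathbb{P}^1$ and write $b_p=Y_1\cup\cdots\cup Y_k$ as a chain, labelled so that $Y_1$ meets $F_{11}$ and $F_{21}\cap Y_1$ lies either on $Y_1$ or further along the chain. The cross-ratio $C_{X,b}(q)$ for $q$ near $p$ is pulled back along $b$, so its order of vanishing or pole at $p$ equals the order of the corresponding cross-ratio function as the smooth fiber $b_q$ degenerates into the chain. The key observation is that on a chain $Y_1\cup\cdots\cup Y_k$ two of the four sections land on the ``first'' component $b_p^1$ ($F_{11}$ and, say, $F_{12}$) and two land on $b_p^2$; a standard local computation (parametrize the family of conics, normalize three of the four marked points, and track the fourth) shows that when $b_p^1=b_p^2$ all four limit points lie on a single component and the cross-ratio extends with a zero of order exactly $k-1$, whereas when $b_p^1\neq b_p^2$ the two pairs of limit points collide on distinct components and one gets a pole of order $k-1$. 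When $k=1$ the fiber is smooth and the four marked points stay distinct, so the order is $0$. This is the step I expect to be the main obstacle: one must carefully set up the degeneration of the cross-ratio along a reducible fiber and check the sign and the exponent, which is a local toric/valuation computation rather than something purely formal.

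Part~(1) then follows by a degree count. The class $B=[b_p]\in\textup{Pic}(X)$ is fixed, and $C_{X,b}$ is a rational function $\mathbb{P}^1\to\mathbb{P}^1$, so its degree is the number of preimages (with multiplicity) of a generic value, equivalently $\deg(C_{X,b})=\sum_{p}\max(\textup{ord}_p(C_{X,b}),0)$. Using part~(2), the poles occur exactly at the singular fibers with $b_p^1\neq b_p^2$, each contributing $k_p-1$. Since the total number of components in excess of one, summed over all singular fibers of $b$, is governed by the Euler characteristic of $X$ (which is $12$) together with the contributions of $F_1,F_2$ and the already-present section structure, and since the Euler-characteristic bound quoted before the definition gives at most eight singular fibers, one computes that the excess components contributing poles sum to exactly $4$. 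Alternatively, and more cleanly, one intersects: $C_{X,b}$ has a zero of total order $4$ coming from the four points $b(F_1^x),b(F_1^y),b(F_2^x),b(F_2^y)$ where a section of $F_1$ or $F_2$ passes through a nodal point (this is part~(3)), so the degree is $4$. I would phrase part~(1) as a corollary of parts~(2) and~(3) once those are in hand.

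Finally, part~(3): at the point $q=b(F_1^x)$, the fiber $b_q$ passes through the node $F_1^x$ of the $I_2$-fiber $F_1$, which is the intersection point $F_{11}\cap F_{12}$; hence $b_q\cap F_{11}=b_q\cap F_{12}=F_1^x$, two of the four arguments of the cross-ratio in~(\ref{FormulaConicBundle}) coincide, and therefore $C_{X,b}(q)=[x,\,\ast,\,x,\,\ast]=1$. The same argument applies verbatim at $b(F_1^y)$, $b(F_2^x)$, $b(F_2^y)$, giving four points (generically distinct) at which the value is $1$; combined with the degree-$4$ statement of part~(1), these are all of them, which also justifies calling the remaining preimages of $1$ the ``principal parameters'' later. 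I would present parts in the order (2), then (3), then deduce (1).
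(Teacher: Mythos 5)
The main step of your argument, part (2), rests on a description of the degenerate fibers that is geometrically wrong. You claim that when $b_p^1=b_p^2$ ``all four limit points lie on a single component'' and that the parenthetical distribution is ``$F_{11}$ and, say, $F_{12}$'' on one end component. Neither can happen: the end components $Y_1,Y_k$ of a chain are $(-1)$-curves, hence sections of the elliptic fibration, so $Y_1\cdot F_1=Y_1\cdot(-K_X)=1$ and $Y_1$ meets exactly one of $F_{11},F_{12}$ and exactly one of $F_{21},F_{22}$ (and the middle $(-2)$-curves, being components of other elliptic fibers, are disjoint from $F_1\cup F_2$). Thus in every singular fiber exactly two of the four marked points sit on $Y_1$ and two on $Y_k$, with $F_{11}$ and $F_{12}$ always on opposite ends; the dichotomy is only about \emph{which} pairing occurs. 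Moreover, the mechanism you invoke would not give the conclusion: if all four points did lie on one component, the cross-ratio would have a finite, generically nonzero limit (order $0$), not a zero of order $k-1$. The correct argument, reading off (\ref{FormulaConicBundle}) with the convention $[z_1,z_2,z_3,z_4]=\frac{(z_1-z_2)(z_3-z_4)}{(z_1-z_4)(z_3-z_2)}$, is: if $b_p^1=b_p^2$ the pairs $\{z_1,z_2\}$ and $\{z_3,z_4\}$ collide on the two ends, so the numerator degenerates and one gets a zero; if $b_p^1\neq b_p^2$ the pairs $\{z_1,z_4\}$ and $\{z_2,z_3\}$ collide and one gets a pole; the exponent $k-1$ then comes from the local model $xy=t$ at each of the $k-1$ nodes of the chain (one factor of $t$ per node separating the two ends). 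With that replacement your plan matches the paper, whose proof of this lemma is a one-line appeal to (\ref{FormulaConicBundle}) and Lemma~\ref{fib}.

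Two smaller points. Your ``cleaner'' route to part (1) conflates the four points where $C_{X,b}=1$ with zeros of $C_{X,b}$; value $1$ is neither a zero nor a pole, so this gives nothing about the degree (at best it bounds the degree of $C_{X,b}-1$ from below). The Euler-characteristic route does work, but you must finish it: $e(X)=12$ gives $\sum_{p}(k_p-1)=8$ over all singular fibers of $b$, and since zeros and poles of a rational function on $\mathbb{P}^1$ have equal total degree and, by part (2), every singular fiber contributes its full $k_p-1$ to one side or the other, the total $8$ splits as $4+4$, whence $\deg C_{X,b}=4$. Part (3) is fine as you state it, with the tacit (and easily verified) point that the fibers of $b$ through the nodes $F_1^x,F_1^y,F_2^x,F_2^y$ are irreducible --- no component of a singular fiber of $b$ can pass through a node of $F_1$ or $F_2$ --- so the cross-ratio is literally defined there and equals $1$ because the entries in positions $1,3$ (resp.\ $2,4$) coincide.
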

\begin{proof}
This statement follows directly from (\ref{FormulaConicBundle}) and Lemma \ref{fib}. 	
\end{proof}

Assume that 
\begin{align}\label{EqualityDivisorConic}
\textup{div}\left(C_{X,b}\right)=p_1+p_2+p_3+p_4-p_5-p_6-p_7-p_8
\end{align}
for points $p_1,\ldots,p_8\in \mathbb{P}^1,$ which are not necessarily distinct.  Then $b_{p_i}^1=b_{p_i}^2$ for $i\in\{1,2,3,4\}$ and $b_{p_i}^1\neq b_{p_i}^2$ for $i\in\{5,6,7,8\}.$

 Our next goal is to write down the conic bundle function explicitly. For that we  choose a coordinate on  $\mathbb{P}^1.$ There are two natural ways to do it. Consider coordinates $t_1, t_2$ on $\mathbb{P}^1$ such that
\begin{align*}
&t_1(b(F_{1}^x))=t_2(b(F_{2}^x))=0,\\ 
&t_1(b(F_{1}^y))=t_2(b(F_{2}^y))=\infty,\\ 
&t_1(p_8)=t_2(p_8)=1.
\end{align*}
 Denote by  $\psi_{X,b}\in \textup{PGL}_2(\mathbb{C})$ the M\"obius transformation such that $t_2=\psi_{X,b}(t_1).$  
 \begin{lemma}
For $1\leq i\leq 8$ we have $t_1\left( p_i\right)=\textup{Res}_{F_1}\left([b_{p_i}^1-b_{p_8}^1]\right)$ and $t_2\left( p_i\right)= \textup{Res}_{F_2}\left([b_{p_i}^2-b_{p_8}^2]\right).$ 
\end{lemma}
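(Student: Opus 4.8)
The plan is to identify, for each $p_i$, a pair of sections of the elliptic fibration whose difference is precisely the root class $[b_{p_i}^1]-[b_{p_8}^1]$, and then to match the explicit cross-ratio formula for $\textup{Res}_{F_1}$ from (\ref{FormulaExplicitPeriod}) with the cross-ratio defining $t_1$. The key observation is that the curves $b_{p_i}^1$ (the component of $b_{p_i}$ meeting $F_{11}$) are $(-1)$-curves by Lemma \ref{fib}, hence sections of the elliptic fibration; and each meets $F_{11}$, so $[b_{p_i}^1]-[b_{p_8}^1]$ lies in $f^\perp$, restricts trivially to both components of $F_1$, and therefore defines a class in $\textup{Q}(\textup{E}_7^\textup{L})$ on which $\textup{Res}_{F_1}$ is evaluated.

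First I would recall that the coordinate $t_1$ on $\mathbb{P}^1$ is pinned down by $t_1(b(F_1^x))=0$, $t_1(b(F_1^y))=\infty$, $t_1(p_8)=1$. Since $F_{11}$ is a section of $b$, the morphism $b$ restricts to an isomorphism $F_{11}\xrightarrow{\sim}\mathbb{P}^1$, and it carries the two nodal points $F_1^x, F_1^y\in F_{11}$ and the intersection points $b_{p_i}^1\cap F_{11}$ to $b(F_1^x), b(F_1^y)$ and $p_i$ respectively (here I use that $b_{p_i}^1$ lies over $p_i$ and meets $F_{11}$ in the single point $b_{p_i}^1\cap F_{11}$, because $F_{11}$ is a section). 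Because $b|_{F_{11}}$ is an isomorphism of rational curves it preserves cross-ratios, so
\[
t_1(p_i)=\left[b(F_1^x),p_i,b(F_1^y),p_8\right]_{\mathbb{P}^1}=\left[F_1^x,\,b_{p_i}^1\cap F_{11},\,F_1^y,\,b_{p_8}^1\cap F_{11}\right]_{F_{11}}.
\]
By (\ref{FormulaExplicitPeriod}) applied to the sections $s_1=b_{p_i}^1$, $s_2=b_{p_8}^1$ (both meeting $F_1$ in the component $F_{11}$), the right-hand side is exactly $\textup{Res}_{F_1}([b_{p_i}^1]-[b_{p_8}^1])$, which is what we want. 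For $i=8$ both sides are $1$, consistent with the normalization.

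The statement for $t_2$ is entirely parallel, using $F_{21}$ in place of $F_{11}$, the nodal points $F_2^x,F_2^y$, the normalization $t_2(b(F_2^x))=0$, $t_2(b(F_2^y))=\infty$, $t_2(p_8)=1$, and formula (\ref{FormulaExplicitPeriod}) for $\textup{Res}_{F_2}$. I expect the main subtlety to be bookkeeping rather than anything deep: one must check that $b_{p_i}^1$ really does meet $F_1$ in the component $F_{11}$ and not $F_{12}$ — but this is forced because we \emph{defined} $b_p^1$ as the component of $b_p$ through $F_{11}$ — and one must make sure the orientation conventions in the cross-ratio (\ref{FormulaExplicitPeriod}) versus (\ref{FormulaExplicitPeriod2}) are applied with $F_{11}$, which again is automatic from the setup. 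A secondary point is to confirm $b_{p_i}^1\neq b_{p_8}^1$ as divisor classes when $p_i\neq p_8$, so that $t_1(p_i)\neq 1$ and the identification is consistent with $C_{X,b}$ taking the value $1$ only at $b(F_1^x),b(F_1^y),b(F_2^x),b(F_2^y)$ among the relevant points; this follows from genericity together with Lemma \ref{LemmaGenericSurface}, but is not actually needed for the stated equality of values.
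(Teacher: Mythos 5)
Your proposal is correct and follows essentially the same route as the paper: transport the normalization of $t_1$ through the isomorphism $b|_{F_{11}}\colon F_{11}\lra \mathbb{P}^1$ to rewrite $t_1(p_i)$ as the cross-ratio $\bigl[F_1^x,\,b_{p_i}^1\cap F_{11},\,F_1^y,\,b_{p_8}^1\cap F_{11}\bigr]_{F_{11}}$ and then invoke (\ref{FormulaExplicitPeriod}), with the parallel argument for $t_2$. The extra bookkeeping you include (that $b_{p_i}^1$ is an end component of the fiber, hence a section of the elliptic fibration meeting $F_1$ in $F_{11}$) is consistent with the setup of \S\ref{SectionConicBundle} and does not change the argument.
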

\begin{proof}
	The morphism $b\colon X \lra \mathbb{P}^1$ induces an isomorphism between $F_{11}$ and $\mathbb{P}^1,$ so the conic bundle function can be viewed as a rational function on $F_{11}.$ The first statement follows from Lemma \ref{LemmaConicBundleFunction} and  (\ref{FormulaExplicitPeriod}). The proof of the second statement is similar.
\end{proof}

\begin{corollary}\label{CorollaryConicFunction} The  function $C_{X,b}\in \mathbb{C}(\mathbb{P}^1)$ can be written as a rational function of $t_1$ or as a rational function of~$t_2:$ 
\[
C_{X,b}=C^{\textup{L}}_{X,b}(t_1)=C^{\textup{A}}_{X,b}(t_2).
\]
We have
\begin{align*}
&C_{X,b}^{\textup{L}}(t)= \frac{\left (t-\textup{Res}_{F_1}([b_{p_1}^1-b_{p_8}^1]) \right)\left (t-\textup{Res}_{F_1}([b_{p_2}^1-b_{p_8}^1]) \right)\left (t-\textup{Res}_{F_1}([b_{p_3}^1-b_{p_8}^1]) \right)\left (t-\textup{Res}_{F_1}([b_{p_4}^1-b_{p_8}^1]) \right)}{\left (t-\textup{Res}_{F_1}([b_{p_5}^1-b_{p_8}^1]) \right)\left (t-\textup{Res}_{F_1}([b_{p_6}^1-b_{p_8}^1]) \right)\left (t-\textup{Res}_{F_1}([b_{p_7}^1-b_{p_8}^1]) \right)\left (t-\textup{Res}_{F_1}([b_{p_8}^1-b_{p_8}^1]) \right)},\\
&C_{X,b}^{\textup{A}}(t)=\frac{\left (t-\textup{Res}_{F_2}([b_{p_1}^2-b_{p_8}^2]) \right)\left (t-\textup{Res}_{F_2}([b_{p_2}^2-b_{p_8}^2]) \right)\left (t-\textup{Res}_{F_2}([b_{p_3}^2-b_{p_8}^2]) \right)\left (t-\textup{Res}_{F_2}([b_{p_4}^2-b_{p_8}^2]) \right)}{\left (t-\textup{Res}_{F_2}([b_{p_5}^2-b_{p_8}^2]) \right)\left (t-\textup{Res}_{F_2}([b_{p_6}^2-b_{p_8}^2]) \right)\left (t-\textup{Res}_{F_2}([b_{p_7}^2-b_{p_8}^2]) \right)\left (t-\textup{Res}_{F_2}([b_{p_8}^2-b_{p_8}^2]) \right)}.
\end{align*}
We have $C_{X,b}^{\textup{L}}(t)=C_{X,b}^{\textup{A}}(\psi_{X,b}(t)).$
\end{corollary}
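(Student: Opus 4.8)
The plan is to read off the two explicit formulas from the divisor identity~(\ref{EqualityDivisorConic}) together with the normalizations recorded in Lemma~\ref{LemmaConicBundleFunction}, and then to deduce the last assertion directly from the definition of $\psi_{X,b}$. First, since $t_1$ identifies the target of $b$ with $\mathbb{P}^1=\C\cup\{\infty\}$, the conic bundle function is a rational function of $t_1$, which I denote by $C^{\textup{L}}_{X,b}(t_1)$. By Lemma~\ref{LemmaConicBundleFunction}(1) it has degree $4$, and by~(\ref{EqualityDivisorConic}) its zero divisor is $p_1+p_2+p_3+p_4$ and its polar divisor is $p_5+p_6+p_7+p_8$; hence there is a unique constant $c\in\C^\times$ with
\[
C^{\textup{L}}_{X,b}(t)=c\,\frac{\bigl(t-t_1(p_1)\bigr)\bigl(t-t_1(p_2)\bigr)\bigl(t-t_1(p_3)\bigr)\bigl(t-t_1(p_4)\bigr)}{\bigl(t-t_1(p_5)\bigr)\bigl(t-t_1(p_6)\bigr)\bigl(t-t_1(p_7)\bigr)\bigl(t-t_1(p_8)\bigr)}.
\]

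To compute $c$ I would use Lemma~\ref{LemmaConicBundleFunction}(3): $C_{X,b}$ takes the value $1$ at $b(F_1^y)$, and by the choice of $t_1$ this point corresponds to $t_1=\infty$; since numerator and denominator above have the same degree, $C^{\textup{L}}_{X,b}(\infty)=c$, so $c=1$. (The analogous normalization at $b(F_1^x)$, which corresponds to $t_1=0$, then becomes the identity $\prod_{i=1}^{4}t_1(p_i)=\prod_{i=5}^{8}t_1(p_i)$, which holds automatically and is not needed.) It remains to substitute the values $t_1(p_i)=\textup{Res}_{F_1}\bigl([b_{p_i}^1-b_{p_8}^1]\bigr)$ furnished by the previous lemma, noting that $t_1(p_8)=1=\textup{Res}_{F_1}\bigl([b_{p_8}^1-b_{p_8}^1]\bigr)$ accounts for the displayed factor $t-\textup{Res}_{F_1}([b_{p_8}^1-b_{p_8}^1])$. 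This yields the asserted formula for $C^{\textup{L}}_{X,b}$, and the formula for $C^{\textup{A}}_{X,b}$ is obtained identically, working with the coordinate $t_2$, using the point $b(F_2^y)$ at $t_2=\infty$ to see that the leading constant is again $1$, and substituting $t_2(p_i)=\textup{Res}_{F_2}\bigl([b_{p_i}^2-b_{p_8}^2]\bigr)$. Finally, since $t_2=\psi_{X,b}(t_1)$ by definition, on the target of $b$ we have $C^{\textup{L}}_{X,b}(t_1)=C_{X,b}=C^{\textup{A}}_{X,b}(t_2)=C^{\textup{A}}_{X,b}(\psi_{X,b}(t_1))$, and since $t_1$ is a coordinate this is the desired identity of rational functions $C^{\textup{L}}_{X,b}(t)=C^{\textup{A}}_{X,b}(\psi_{X,b}(t))$.

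The argument is essentially bookkeeping; the only point requiring genuine care is the normalization of the leading constant $c$, i.e.\ the fact that $t_1=0$ and $t_1=\infty$ are neither zeros nor poles of $C_{X,b}$. This is exactly Lemma~\ref{LemmaConicBundleFunction}(3) combined with the choice of $t_1$, and one also needs the degree count of Lemma~\ref{LemmaConicBundleFunction}(1) to be certain that the four linear factors in the numerator (resp.\ denominator) exhaust all the zeros (resp.\ poles), with multiplicities tracked as in~(\ref{EqualityDivisorConic}) in case some of the $p_i$ coincide.
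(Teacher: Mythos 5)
Your proposal is correct and follows exactly the argument the paper intends (and leaves implicit): the divisor identity~(\ref{EqualityDivisorConic}) and the degree count from Lemma~\ref{LemmaConicBundleFunction} pin down $C_{X,b}$ up to a constant, the value $1$ at $b(F_1^y)$ (resp.\ $b(F_2^y)$), i.e.\ at $t_1=\infty$ (resp.\ $t_2=\infty$), fixes the constant to be $1$, and the preceding lemma supplies $t_1(p_i)$ and $t_2(p_i)$ as residues, with the last identity being immediate from $t_2=\psi_{X,b}(t_1)$. Your closing remarks on multiplicities and on why $t_1=0,\infty$ carry neither zeros nor poles are exactly the right points of care; no gap.
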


\subsection{Conic bundles on a generic $D_6$-surface}\label{SectionConicBundleGeneric}

\begin{proposition}
If $X$ is a generic $D_6$-surface then there exists an admissible conic bundle $b\colon X\lra \mathbb{P}^1$.	
\end{proposition}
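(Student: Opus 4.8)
The plan is to produce an admissible conic bundle by exhibiting its class $B\in\textup{Pic}(X)$ explicitly as a combination of the components of $F_1$, $F_2$ and the zero section, checking that $B$ is nef with $B^2=0$ and $B\cdot K_X=-2$, and then invoking Riemann--Roch together with the genericity hypothesis to conclude that $|B|$ is a base-point-free pencil whose generic member is a smooth rational curve meeting each of $F_{11},F_{12},F_{21},F_{22}$ once. Concretely, I would first recall that $f=-K_X$, that $f^\perp/f\cong\textup{Q}(\textup{E}_8)$ with the four fiber components mapping to $\pm e_I,\pm e_\varnothing$, and that $\langle s_0,f\rangle^\perp\to f^\perp/f$ is an isometry; this lets me lift any chosen class in $\textup{Q}(\textup{E}_8)$ to an honest divisor class on $X$. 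I want $B$ with $B\cdot f=2$ (so that $B$ is not in $f^\perp$), $B\cdot F_{1j}=B\cdot F_{2j}=1$ for both components of each $I_2$-fiber, $B^2=0$ and $B\cdot s_0$ equal to some nonnegative integer. A natural candidate is $B=-K_X+\big(\text{section class}\big)-\big(\text{correction}\big)$; more cleanly, one takes $B$ to be $f$ minus a root of $\textup{D}_6$ appropriately, or the pullback of $\mathcal{O}(1)$ under a degree-two map — the point is that the required intersection numbers pin $B$ down up to the $W(\textup{D}_6)$-action, and any such $B$ works.

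The key steps, in order: (1) write down an explicit $B$ and verify the numerical conditions $B^2=0$, $B\cdot K_X=-2$, $B\cdot F_{ij}=1$ for all four components, using the intersection form on $\textup{Pic}(X)$ and the known images under $\pi$; (2) show $B$ is nef — here genericity enters, since the only $(-2)$-curves on $X$ that could fail $B\cdot C\ge 0$ are components of reducible fibers of the elliptic fibration, and by assumption the only such fibers are $F_1,F_2$, whose components satisfy $B\cdot F_{ij}=1>0$; similarly $B\cdot s_0\ge 0$ must be arranged or argued, and $B\cdot C\ge 0$ for a section $C$ follows because $C\cdot f=1$ forces $C$ to meet a generic fiber and one controls $C-s_0\in f^\perp$; (3) from $B$ nef with $B^2=0$ and $B\cdot(-K_X)=2>0$, conclude by Riemann--Roch that $h^0(B)=2$ and $|B|$ has no fixed component (a fixed component would be a $(-2)$-curve $C$ with $B\cdot C=0$, excluded by nefness and genericity); (4) conclude $|B|$ is base-point-free (base points would force $B^2>0$ on a blow-down, or are excluded directly since $X$ is a rational elliptic surface and $B$ is nef with $B^2=0$), so it defines a morphism $b\colon X\to\mathbb{P}^1$ whose fibers are the curves in $|B|$; (5) the generic fiber is smooth rational of arithmetic genus $0$ by adjunction ($2p_a-2=B^2+B\cdot K_X=-2$), and each $F_{ij}$, having $B\cdot F_{ij}=1$, maps isomorphically to $\mathbb{P}^1$, i.e.\ is a section of $b$, so $b$ is admissible.

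The main obstacle I anticipate is step (4), passing from $B$ nef with $B^2=0$ to $|B|$ base-point-free and defining a genuine conic bundle rather than just a pencil with base locus; on a rational surface this is standard but one has to be slightly careful, and the cleanest route is probably to use that on a rational elliptic surface every nef class $B$ with $B^2=0$ and $B\cdot(-K_X)>0$ is semiample — one can see this by writing $B$ as a sum of a fiber class plus something, or by the classification of such classes, or simply by exhibiting two explicit members of $|B|$ with no common point using the $(-1)$-curves furnished by the surface. A secondary subtlety is making the choice of $B$ canonical enough that its $W(\textup{D}_6)$-orbit is well-understood, which matters for the later statement $C^{\textup{L}}_{X,b}=\textup{CK}^{\textup{L}}_T$; but for the bare existence statement asserted here it suffices to produce one such $B$, so I would defer the canonical-choice discussion. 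I would also remark that the existence of $b$ can alternatively be deduced from the classification of conic bundles on rational elliptic surfaces in \cite{GS19} combined with Lemma \ref{fib}, which guarantees a conic bundle with the four prescribed sections once one checks the numerical compatibility; I would present the direct Riemann--Roch argument as the main proof and mention this as a cross-check.
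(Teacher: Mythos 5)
Your route --- produce a numerical class $B$ with $B^2=0$, $B\cdot(-K_X)=2$, $B\cdot F_{ij}=1$, prove nefness, and run Riemann--Roch --- is genuinely different from the paper's proof, which is two lines: by \cite[Theorem 3.5]{Fus06} a generic $D_6$-surface is the blow-up of $\mathbb{P}^2$ at the base points of a pencil spanned by two conic-plus-line cubics $C_1\cup l_1$ and $C_2\cup l_2$, and the pencil of lines through a point $p\in C_1\cap C_2$ is already an admissible conic bundle. Your approach can be made to work, but as written it has a gap precisely at your step (2), and the two assertions you lean on there are not correct. First, the numerical conditions do not pin $B$ down up to $W(\textup{D}_6)$: writing $B=2s_0+\mu f+c$ with $c\in\langle s_0,f\rangle^{\perp}\cong\textup{Q}(\textup{E}_8)$, the conditions amount to $c^2=4-4\mu$ and $c\cdot e_I=c\cdot e_{\varnothing}=-1$, which has solutions for every $\mu\ge 2$; the set of numerically admissible classes is infinite, while $W(\textup{D}_6)$ is finite. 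Second, nefness against the $(-1)$-curves is the entire content of the step --- a generic $D_6$-surface has Mordell--Weil rank $6$, hence infinitely many sections --- and the phrase ``one controls $C-s_0\in f^{\perp}$'' is not an argument. What saves your strategy is a computation you did not make: every class with $E^2=-1$, $E\cdot f=1$ (in particular every section) has the form $E=s_0+\tfrac{q(d)}{2}f+d$ with $d\in\langle s_0,f\rangle^{\perp}$, where $q=-(\,\cdot\,)^2$ is the positive definite form, and then
\begin{equation*}
B\cdot E \;=\; q\!\left(d-\tfrac{c}{2}\right)-1 ,
\end{equation*}
which is an integer because $q(c)\equiv 0\ (\mathrm{mod}\ 4)$, and which vanishes only if $c=2d$ --- impossible, since $c\cdot e_I=-1$ is odd. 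Hence $B\cdot E\ge 0$ for every section; combined with $B\cdot F_{ij}=1$, with genericity (no other $(-2)$-curves), and with the fact that only irreducible curves of negative self-intersection need testing, this gives nefness of \emph{every} numerically admissible $B$. Some such argument must be supplied; without it the proof does not close, since nothing you wrote rules out $B$ meeting a section negatively.

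Two further remarks. The proposed ``cross-check'' via \cite{GS19} and Lemma \ref{fib} is circular: both statements describe the singular fibers of an admissible conic bundle once one exists; neither produces one. Steps (3)--(5) are fine in outline ($h^2(B)=0$ since $(K_X-B)\cdot f<0$, so Riemann--Roch gives $h^0(B)\ge 2$; base-point-freeness of a nef class with $B\cdot(-K_X)=2$ on an anticanonical rational surface is standard but should be cited or argued, as you anticipate; $B\cdot F_{11}=1$ rules out the pencil being composed with a multiple cover, and adjunction plus Bertini gives smooth rational fibers meeting each $F_{ij}$ once). So the skeleton is sound, and once the nefness computation is inserted your argument even yields slightly more than the paper states (every numerically admissible class is realized by an admissible conic bundle), at the cost of being much longer than the paper's appeal to Fusi's classification.
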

\begin{proof}
By \cite[Theorem 3.5]{Fus06} a generic $D_6$-surface $X$ is obtained as a blow up of $\mathbb{P}^2$ at the nine base points of a pencil of plane cubics generated by curves $C_1 \cup l_1$ and  $C_2 \cup l_2$ for conics  $C_1, C_2$  and lines $l_1, l_2$ such that $C_1\cap C_2 \cap l_1 =\varnothing$ and $C_1\cap C_2 \cap l_2 =\varnothing.$ Pick a point $p\in  C_1\cap C_2.$ The pencil of lines passing through $p$ defines an admissible conic bundle on $X.$ 
\end{proof}

Lemma \ref{fib} implies that $b$ has exactly eight singular fibers, because otherwise a component $Y_2$ of a singular fiber would be a component of a reducible fiber $F$ of the elliptic fibration, such that $F$ is distinct from $F_1$ and $F_2$. It follows that points $b_1,\ldots,b_8$ are distinct. Let us fix their order and assume that (\ref{EqualityDivisorConic}) holds.

We are going to define a marking of the surface $X.$ Since we have already chosen a zero section and ordered singular points of the fibers, we just need to construct an isometry between $f^{\perp}/f$ and $\textup{Q}(\textup{E}_8).$ For this we choose a base of roots in $f^\perp/f$ and in $E_8$ as in Figure \ref{FigureSimpleRoots}; there exists an isometry identifying the corresponding simple roots. To check that this defines  a marking in the sense of Definition \ref{DefinitionMarking} we need to prove that $e_{\varnothing}=\pi([F_{21}]);$ Lemma \ref{LemmaRelationInPicard} does this.


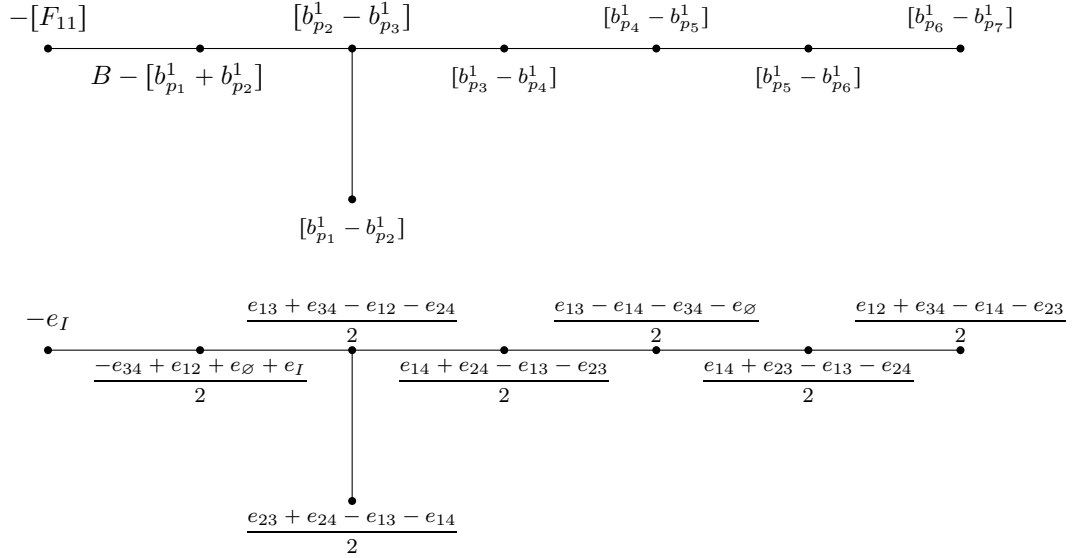
\begin{figure}
\centering
\begin{tikzpicture}

\draw (0,4) -- (12,4);
\draw (4,4) -- (4,2);

\draw (0,4) node[circle,fill=black,inner sep=0pt,minimum size=3pt]{};
\draw (0,4.4) node{\mbox{\small $-[F_{11}]$}};

\draw (2,4) node[circle,fill=black,inner sep=0pt,minimum size=3pt]{};
\draw (1.7,3.6) node{\mbox{\small $B-[b_{p_1}^1+b^1_{p_2}]$}};

\draw (4,4) node[circle,fill=black,inner sep=0pt,minimum size=3pt]{};
\draw (4,4.4) node{\mbox{\small $[b_{p_2}^1-b_{p_3}^1]$}};

\draw (4,2) node[circle,fill=black,inner sep=0pt,minimum size=3pt]{};
\draw (4,1.6) node{\mbox{\scriptsize $[b_{p_1}^1-b_{p_2}^1]$}};

\draw (6,4) node[circle,fill=black,inner sep=0pt,minimum size=3pt]{};
\draw (6,3.6) node{\mbox{\scriptsize $[b_{p_3}^1-b_{p_4}^1]$}};

\draw (8,4) node[circle,fill=black,inner sep=0pt,minimum size=3pt]{};
\draw (8,4.4) node{\mbox{\scriptsize $[b_{p_4}^1-b_{p_5}^1]$}};

\draw (10,4) node[circle,fill=black,inner sep=0pt,minimum size=3pt]{};
\draw (10,3.6) node{\mbox{\scriptsize $[b_{p_5}^1-b_{p_6}^1]$}};

\draw (12,4) node[circle,fill=black,inner sep=0pt,minimum size=3pt]{};
\draw (12,4.4) node{\mbox{\scriptsize $[b_{p_6}^1-b_{p_7}^1]$}};

\draw (0,0) -- (12,0);
\draw (4,0) -- (4,-2);

\draw (0,0) node[circle,fill=black,inner sep=0pt,minimum size=3pt]{};
\draw (0,0.4) node{\mbox{\small $-e_{I}$}};

\draw (2,0) node[circle,fill=black,inner sep=0pt,minimum size=3pt]{};
\draw (2,-0.4) node{\mbox{\scriptsize $\dfrac{-e_{34}+e_{12}+e_{\varnothing}+e_I}{2}$}};

\draw (4,0) node[circle,fill=black,inner sep=0pt,minimum size=3pt]{};
\draw (4,0.4) node{\mbox{\scriptsize $\dfrac{e_{13}+e_{34}-e_{12}-e_{24}}{2}$}};

\draw (4,-2) node[circle,fill=black,inner sep=0pt,minimum size=3pt]{};
\draw (4,-2.4) node{\mbox{\scriptsize $\dfrac{e_{23}+e_{24}-e_{13}-e_{14}}{2}$}};

\draw (6,0) node[circle,fill=black,inner sep=0pt,minimum size=3pt]{};
\draw (6,-0.4) node{\mbox{\scriptsize $ \dfrac{e_{14}+e_{24}-e_{13}-e_{23}}{2}$}};

\draw (8,0) node[circle,fill=black,inner sep=0pt,minimum size=3pt]{};
\draw (8,0.4) node{\mbox{\scriptsize $\dfrac{e_{13}-e_{14}-e_{34}-e_{\varnothing}}{2}$}};

\draw (10,0) node[circle,fill=black,inner sep=0pt,minimum size=3pt]{};
\draw (10,-0.4) node{\mbox{\scriptsize $\dfrac{e_{14}+e_{23}-e_{13}-e_{24}}{2}$}};

\draw (12,0) node[circle,fill=black,inner sep=0pt,minimum size=3pt]{};
\draw (12,0.4) node{\mbox{\scriptsize $\dfrac{e_{12}+e_{34}-e_{14}-e_{23}}{2}$}};
\end{tikzpicture}

\caption{Matching root bases of lattices $f^{\perp}/f$ and $Q(E_8)$. We use the same notation for a class $r\in f^\perp$ and its projection $\pi(r)\in f^{\perp}/f.$ 
}
\label{FigureSimpleRoots}
\end{figure}

\begin{lemma}\label{LemmaRelationInPicard}  The following equalities hold in $\textup{Pic}(X):$
\[2B=\sum_{i=1}^4[b_{p_i}^1]+[F_{11}]-[F_{22}].\]
\end{lemma}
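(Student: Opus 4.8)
The plan is to identify the class $2B - [F_{11}] + [F_{22}]$ in $\operatorname{Pic}(X)$ by computing its intersection numbers against a spanning set of divisor classes and then invoking the fact that $\operatorname{Pic}(X)$ is a unimodular lattice (so a class is determined by its intersection pairing with a basis). Concretely, I would fix the zero section $s_0$ and use the decomposition $\operatorname{Pic}(X) = \langle s_0, f\rangle \oplus \langle s_0, f\rangle^{\perp}$ from \S\ref{SectionRationalElliptic}, together with the basis of $f^{\perp}/f$ drawn in Figure~\ref{FigureSimpleRoots}. Both sides of the claimed identity lie in $f^{\perp}$: for the left side, $B \cdot f = 2$ since a fiber of the elliptic fibration meets a general fiber of the conic bundle $b$ in two points (the two components $F_{i1}, F_{i2}$ are sections of $b$, and a smooth elliptic fiber is not a component of any $b_p$), so $2B \cdot f = 4$; for the right side, $\sum_{i=1}^4 [b_{p_i}^1]\cdot f + [F_{11}]\cdot f - [F_{22}]\cdot f$, each $b_{p_i}^1$ is a $(-1)$-curve meeting $f$ once and $F_{11}, F_{22}$ are fiber components meeting $f$... here one must be slightly careful, so it is cleaner to verify directly that both sides have the same image under $\pi$ in $f^{\perp}/f$ and the same coefficient of $s_0$.

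The key steps, in order: (1) Record the intersection numbers of $B$ with $F_{11}, F_{12}, F_{21}, F_{22}$ (each is $1$, since these four curves are sections of $b$) and with $s_0$ (this is $1$ as well, since $s_0$ is also a section of the conic bundle — indeed in the construction $s_0$ is a section of the elliptic fibration meeting each fiber once, and one checks it meets $b_p$ once). (2) Observe that $b_{p_i}^1$ is the component of the singular fiber $b_{p_i}$ meeting $F_{11}$; since for a generic $X$ each $b_{p_i}$ has exactly two components $Y_1 = b_{p_i}^1$ and $Y_2$ with $Y_1 + Y_2 = B$, and $b_{p_i}^1, b_{p_i}^2$ coincide exactly for $i \in \{1,2,3,4\}$ by \eqref{EqualityDivisorConic}, we get $b_{p_i}^1 \cdot F_{11} = b_{p_i}^1 \cdot F_{21} = 1$ for $i \in \{1,2,3,4\}$, while $b_{p_i}^1 \cdot F_{12} = b_{p_i}^1 \cdot F_{22} = 0$. (3) Compute $(2B - \sum_{i=1}^4[b_{p_i}^1] - [F_{11}] + [F_{22}]) \cdot C$ for $C$ ranging over $\{s_0, f, F_{11}, F_{12}, F_{21}, F_{22}\}$ and over the sections appearing in the root basis of Figure~\ref{FigureSimpleRoots}, checking each is zero. (4) Conclude the difference is $0$ by unimodularity. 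Alternatively, and perhaps more transparently, I would argue geometrically: the divisor $2B$ is the pullback under $b$ of a degree-$2$ divisor on $\mathbb{P}^1$; taking the two points $b(F_1^x)$ (or rather the two points over which the fiber degenerates in the "$b^1 = b^2$" way can be grouped) — more precisely, $2B \sim b^*(p_1 + p_2)$ for any two of the eight critical values, and one relates $b^*(p_i) = [b_{p_i}^1] + [b_{p_i}^2]$; combining this with how the four sections $F_{11}, F_{12}, F_{21}, F_{22}$ sit over the $p_i$ with $i \le 4$ yields the identity after a short linear-algebra manipulation using that $[F_{11}] + [F_{12}] = [F_{21}] + [F_{22}] = f$.

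The main obstacle I anticipate is pinning down \emph{which} linear combination is correct on the nose — i.e.\ getting the coefficients and signs of $[F_{11}]$ and $[F_{22}]$ right rather than, say, $[F_{11}] - [F_{21}]$ or $[F_{12}] - [F_{22}]$. This requires carefully tracking, for each $i \in \{1,2,3,4\}$, that the coincident component $b_{p_i}^1 = b_{p_i}^2$ is the one meeting both $F_{11}$ and $F_{21}$, so that the "other" component $B - [b_{p_i}^1]$ meets both $F_{12}$ and $F_{22}$; summing $b^*(p_i) = [b_{p_i}^1] + (B - [b_{p_i}^1])$ does not immediately help, so instead one should use $2B \sim b^*(p_i + p_j)$ for a \emph{judicious} pair and combine with the relations $\sum_{i=1}^{4}(B - [b_{p_i}^1]) \cdot F_{12} = 4$ etc. The cleanest route is almost certainly the intersection-theoretic verification of step (3): it is mechanical, the unimodularity of $\operatorname{Pic}(X)$ makes it rigorous, and it sidesteps the bookkeeping of rational equivalences on $\mathbb{P}^1$. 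So my proof would present that computation, listing the intersection numbers in a short table and remarking that they determine the class uniquely.
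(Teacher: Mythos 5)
Your route is genuinely different from the paper's. The paper proves the relation by blowing down the eight pairwise disjoint $(-1)$-curves $b_{p_i}^1$, $1\le i\le 8$, via a morphism $r\colon X\to Y$, identifying $Y$ with the Hirzebruch surface $\Sigma_2$ (Picard number $2$, containing the $(-2)$-curve $r(F_{12})$), computing $r_*([F_{11}])=s_Y+4f_Y$ and $r_*([F_{22}])=s_Y+2f_Y$, and then pulling back: $\sum_{i=1}^4[b_{p_i}^1]+[F_{11}]-[F_{22}]=2r^*(f_Y)=2B$. You instead propose to show that $D:=2B-\sum_{i=1}^4[b_{p_i}^1]-[F_{11}]+[F_{22}]$ pairs to zero with a spanning set and conclude by nondegeneracy of the intersection form. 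This is a legitimate and more mechanical alternative, and the intersection numbers you record in step (2) are exactly the right ones: for generic $X$ every singular fiber of $b$ has two components (Lemma \ref{fib} plus genericity), and by (\ref{EqualityDivisorConic}) the component $b_{p_i}^1$ meets $F_{11},F_{21}$ and misses $F_{12},F_{22}$ for $i\le 4$, while for $i\ge 5$ it meets $F_{11},F_{22}$ and misses $F_{12},F_{21}$. With these one checks $D\cdot C=0$ for $C\in\{f,B,[F_{11}],[F_{12}],[F_{21}],[F_{22}],[b_{p_1}^1],\dots,[b_{p_8}^1]\}$.

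There is, however, one concrete defect in the proposal as written. You assert $B\cdot s_0=1$ on the grounds that $s_0$ ``is also a section of the conic bundle''; this is unjustified and in general false --- $s_0$ is only assumed to be a section of the elliptic fibration, and it may well be a component of a singular fiber of $b$, in which case $B\cdot s_0=0$. If you then drop $s_0$, all your remaining test classes ($f$, the fiber components, and the simple roots of Figure \ref{FigureSimpleRoots}, which are combinations of $[F_{11}]$, $B$ and the $[b_{p_i}^1]$) lie in $f^{\perp}$, whose pairing has radical $\mathbb{Q}f$; orthogonality to them only gives $D\in\mathbb{Q}f$, not $D=0$. The fix is cheap and stays within your method: include the curves $[b_{p_j}^1]$ themselves in the test set. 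One computes $D\cdot[b_{p_j}^1]=0$ in both cases ($0-(-1)-1+0$ for $j\le 4$, $0-0-1+1$ for $j\ge 5$), and since $f\cdot[b_{p_j}^1]=1$, the relation $D=cf$ forces $c=D\cdot[b_{p_1}^1]=0$. (Equivalently, the ten classes $[F_{11}],[F_{22}],[b_{p_1}^1],\dots,[b_{p_8}^1]$ have nondegenerate Gram matrix, so they form a $\mathbb{Q}$-basis and orthogonality to them suffices; note that only nondegeneracy of the form is needed, not unimodularity.) With that replacement your argument closes, and it also settles the sign bookkeeping you were worried about.
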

\begin{proof}
Consider eight $(-1)-$curves $b_{p_i}^1, 1\leq i \leq 8.$ These curves are pairwise disjoint, so could be blown down simultaneously by a morphism $r \colon X \lra Y.$
Rational surface $Y$  has Picard number $2$ and contains a $(-2)$-curve $r(F_{12})$, hence $Y$ is isomorphic to the Hirzenbruch surface $\Sigma_2.$ Let $s_Y=[r(F_{12})]\in \textup{Pic}(Y)$ be the class of the zero section of $Y$  and $f_Y\in \textup{Pic}(Y)$ be the classes of a fiber. It is known that $\textup{Pic}(Y)=\langle s_Y, f_Y \rangle$ and $s_Y^2=-2,\ s_Yf_Y=1,\ f_Y^2=0.$
Computing the intersection indices one can see that
$r_*([F_{11}])=s_Y+4f_Y$ and $r_*([F_{22}])=s_Y+2f_Y.$ It follows that
\begin{align*}
	&r^*r_*([F_{11}])=[F_{11}]+\sum_{i=1}^8[b_{p_i}^1],\\
	&r^*r_*([F_{22}])=[F_{22}]+\sum_{i=5}^8[b_{p_i}^1],
\end{align*}
so 
\[
\sum_{i=1}^4[b_{p_i}^1]+[F_{11}]-[F_{22}]=2r^*(f_Y)=2B.
\]
\end{proof}


From here one can deduce that the following equalities hold:
\begin{align*}
&\frac{e_{23}+e_{24}+e_{34}-e_{\varnothing}}{2}=\pi([b_{p_1}^1-b_{p_8}^1]),
&\frac{e_{12}+e_{13}+e_{14}+e_I}{2}=-\pi([b_{p_1}^2-b_{p_8}^2]),\\
&\frac{e_{13}+e_{14}+e_{34}-e_{\varnothing}}{2}=\pi([b_{p_2}^1-b_{p_8}^1]),
&\frac{e_{12}+e_{23}+e_{24}+e_I}{2}=-\pi([b_{p_2}^2-b_{p_8}^2]),\\
&\frac{e_{12}+e_{14}+e_{24}-e_{\varnothing}}{2}=\pi([b_{p_3}^1-b_{p_8}^1]),
&\frac{e_{13}+e_{23}+e_{34}+e_I}{2}=-\pi([b_{p_3}^2-b_{p_8}^2]),\\
&\frac{e_{12}+e_{13}+e_{23}-e_{\varnothing}}{2}=\pi([b_{p_4}^1-b_{p_8}^1]),
&\frac{e_{14}+e_{24}+e_{34}+e_I}{2}=-\pi([b_{p_4}^2-b_{p_8}^2]),\\
&\frac{e_{12}+e_{14}+e_{23}+e_{34}}{2}=\pi([b_{p_5}^1-b_{p_8}^1]),
&\frac{e_{12}+e_{14}+e_{23}+e_{34}}{2}=-\pi([b_{p_5}^2-b_{p_8}^2]),\\
&\frac{e_{12}+e_{13}+e_{24}+e_{34}}{2}=\pi([b_{p_6}^1-b_{p_8}^1]),
&\frac{e_{12}+e_{13}+e_{24}+e_{34}}{2}=-\pi([b_{p_6}^2-b_{p_8}^2]),\\
&\frac{e_{13}+e_{14}+e_{23}+e_{24}}{2}=\pi([b_{p_7}^1-b_{p_8}^1]),
&\frac{e_{13}+e_{14}+e_{23}+e_{24}}{2}=-\pi([b_{p_7}^2-b_{p_8}^2]).
\end{align*}


\subsection{The moduli space of generic marked $D_6$-surfaces}\label{SectionModuliSurfaces}

Let $X$ be a marked $D_6$-surface. The period map is a point $\textup{Res}_{F_1}\in \textup{Hom}\bigl(\textup{Q}\bigl(\textup{E}_7^\textup{L}\bigr),\mathbb{C}^\times\bigr).$ In \S \ref{SectionModuliTetrahedra}
 we defined an open subset $\mathbb{T}\subseteq \textup{Hom}\bigl(\textup{Q}\bigl(\textup{E}_7^\textup{L}\bigr),\mathbb{C}^\times\bigr).$ 
\begin{proposition}\label{PropositionDeterminant}
For a generic marked $D_6$-surface $(X,F_1,F_2)$ we have 
$\textup{Res}_{F_1}\in \mathbb{T}.$
\end{proposition}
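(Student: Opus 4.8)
## Proof Proposal

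The plan is to show that $\textup{Res}_{F_1}$, viewed as a point of $\textup{Hom}\bigl(\textup{Q}\bigl(\textup{E}_7^\textup{L}\bigr),\mathbb{C}^\times\bigr)$, satisfies the two defining conditions of $\mathbb{T}$: first, that $\textup{Res}_{F_1}(r)=1$ for a root $r\in\textup{R}\bigl(\textup{E}_7^\textup{L}\bigr)$ if and only if $r=\pm e_\varnothing$; and second, that $\det\bigl(\textup{Res}_{F_1}\bigr)\neq 0$. The first condition is already available: it is precisely the equivalence of conditions \emph{1} and \emph{2} in Lemma \ref{LemmaGenericSurface}, so since $(X,F_1,F_2)$ is a generic $D_6$-surface by hypothesis, the root-vanishing condition holds verbatim.

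The remaining task is the nonvanishing of the determinant. First I would invoke the fact, established in \S\ref{SectionConicBundleGeneric}, that a generic $D_6$-surface admits an admissible conic bundle $b\colon X\lra\mathbb{P}^1$, and fix a marking of $X$ via the root-base matching of Figure \ref{FigureSimpleRoots} together with the relations listed at the end of \S\ref{SectionConicBundleGeneric}. These relations express each of the twelve roots $\pm e_{ij}$ and the various ``tripod'' roots in terms of the classes $[b_{p_i}^1-b_{p_8}^1]$, hence identify the relevant values of $\textup{Res}_{F_1}$ with explicit cross-ratios $t_1(p_i)$ by the lemma preceding Corollary \ref{CorollaryConicFunction}. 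The key observation is that the explicit expansion of the determinant in Lemma \ref{LemmaDeterminant},
\[
\det(\textup{L})=-\frac{3}{2}+\frac{1}{2^3}\sum_{r\in \textup{R}(E_7^\textup{L})\smallsetminus \textup{R}(D_6)}\textup{L}(r)-\frac{1}{2^3}\sum_{r\in \textup{R}(D_6)}\textup{L}(r)+\frac{1}{2^6}\sum\textup{L}(2r),
\]
shows that $\det(\textup{L})$ is, up to the $W(\textup{D}_6)$-invariance already proved there, a symmetric function of the eight points $t_1(p_1),\ldots,t_1(p_8)$ lying on $F_{11}\cong\mathbb{P}^1$. I would then show that $\det\bigl(\textup{Res}_{F_1}\bigr)$ is a nonzero scalar multiple of the discriminant-type quantity that vanishes exactly when two of the eight points $p_i$ collide — equivalently, when $C_{X,b}$ drops degree, equivalently when $b$ has fewer than eight singular fibers. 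For a generic $D_6$-surface the argument following the proposition in \S\ref{SectionConicBundleGeneric} shows the $p_i$ are distinct, so the determinant is nonzero.

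An alternative, and likely cleaner, route avoids the conic bundle entirely: realize $X$ as the blow-up of $\mathbb{P}^2$ at nine base points (the construction recalled in \S\ref{SectionRationalElliptic}), or better, run the correspondence of Proposition \ref{PropositionModuliTetrahedra} backwards. By the explicit formulas in the proof of Proposition \ref{PropositionModuliTetrahedra}, for any $\textup{L}$ satisfying the genericity condition the quadric with the displayed equation has matrix whose determinant is exactly $\det(\textup{L})$; so $\det(\textup{L})=0$ precisely when that quadric is singular. Thus I would aim to show directly that the period data of a generic $D_6$-surface cannot make this quadric degenerate — if it did, the geometry forces the appearance of an extra reducible fiber or an extra section relation contradicting genericity, again via \cite[Proposition 5.2]{Loo81} as used in Lemma \ref{LemmaGenericSurface}.

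The main obstacle I anticipate is the bookkeeping identifying $\det\bigl(\textup{Res}_{F_1}\bigr)$ geometrically: one must verify that the particular symmetric combination of the eight cross-ratios appearing in the determinant formula really is (a unit times) $\prod_{i<j}(t_1(p_i)-t_1(p_j))$ divided by an appropriate power of the products $t_1(p_i)$ — i.e.\ that the determinant's only zero locus on the moduli space is the ``collision'' locus. This is a finite check using the $W(\textup{D}_6)$-symmetry to reduce the number of cases, but it is the step where the argument could get genuinely computational rather than conceptual, and it is the part I would want to organize carefully (or sidestep via the quadric-degeneration argument above).
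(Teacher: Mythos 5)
The first half of your proposal (the root-vanishing condition via Lemma \ref{LemmaGenericSurface}) matches the paper. The gap is in the determinant step. Your ``key observation'' --- that $\det\bigl(\textup{Res}_{F_1}\bigr)$ is a unit times the collision discriminant $\prod_{i<j}\bigl(t_1(p_i)-t_1(p_j)\bigr)$ of the eight critical values, so that nonvanishing follows from the distinctness of the $p_i$ --- is false, and the finite check you defer would not confirm it. A degree count already rules it out: with $a_{ij}$ the square roots of $\textup{Res}_{F_1}(e_{ij})$, the determinant has width at most $4$ in each variable $a_{ij}$, while each $t_1(p_i)$ is a product of three or four of the $a_{ij}$ and the $28$-factor difference product has far larger width in each variable, so no monomial unit can match them. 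What the paper actually does is different: writing $C^{\textup{L}}_{X,b}$ as in Corollary \ref{CorollaryConicFunction}, the difference of its numerator and denominator, divided by $t$, is a \emph{quadratic} polynomial whose two roots are $t_1\bigl(b(F_2^x)\bigr)$ and $t_1\bigl(b(F_2^y)\bigr)$ (Lemma \ref{LemmaConicBundleFunction}, part 3), and a direct computation identifies the discriminant of that quadratic with $16\bigl(\prod_{i<j}a_{ij}\bigr)^2\det\bigl(\textup{Res}_{F_1}\bigr)$. Nonvanishing then follows because $F_2^x\neq F_2^y$ both lie on the section $F_{21}$ of $b$, which $b$ maps isomorphically to $\mathbb{P}^1$; the distinctness of the eight points $p_i$ plays no role in this step. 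So the zero locus of the determinant is governed by the two nodes of $F_2$, not by collisions among $p_1,\dots,p_8$, and your proposed identification is the step at which the argument breaks.

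Your alternative route does not repair this. Running Proposition \ref{PropositionModuliTetrahedra} ``backwards'' presupposes that the period point of the given surface lies in $\mathbb{T}$ (or that Theorem \ref{TheoremCorrespondence} is available), but Theorem \ref{TheoremCorrespondence} is proved using Proposition \ref{PropositionModuliSurfaces}, which rests on the very proposition you are proving; as sketched this is circular. Moreover, \cite[Proposition 5.2]{Loo81} only converts the condition $\textup{Res}_{F_1}(r)=1$ for a root $r$ into the existence of an extra $(-2)$-curve; the vanishing of $\det$ is not a condition of that shape, so ``degenerate quadric forces an extra reducible fiber'' would need a genuinely new argument rather than another appeal to Lemma \ref{LemmaGenericSurface}.
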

\begin{proof}
From Lemma \ref{LemmaGenericSurface} we see that $\textup{Res}_{F_1}(r)=1$ for a root $r\in \textup{R}\bigl(\textup{E}_7^{\textup{L}}\bigr)$ if and only if $r=\pm e_\varnothing.$ 
It remains to prove that $\det\left(\textup{Res}_{F_1}\right)\neq 0.$ By Lemma \ref{LemmaDeterminant} this statement does not depend on the choice of the marking of $X$. It is convenient to choose the marking introduced in \S \ref{SectionConicBundleGeneric}. The points $t_1(b\left(F_2^x\right))$ and $t_1(b\left(F_{2}^y\right))$ are distinct, so the discriminant of the quadratic polynomial
\begin{align*}
	&\frac{1}{t}\Bigl(\bigl (t-\textup{Res}_{F_1}([b_{p_1}^1-b_{p_8}^1]) \bigr)\bigl (t-\textup{Res}_{F_1}([b_{p_2}^1-b_{p_8}^1]) \bigr)\bigl (t-\textup{Res}_{F_1}([b_{p_3}^1-b_{p_8}^1]) \bigr)\bigl (t-\textup{Res}_{F_1}([b_{p_4}^1-b_{p_8}^1]) \bigr)\\
	-&\bigl (t-\textup{Res}_{F_1}([b_{p_5}^1-b_{p_8}^1]) \bigr)\bigl (t-\textup{Res}_{F_1}([b_{p_6}^1-b_{p_8}^1]) \bigr)\bigl (t-\textup{Res}_{F_1}([b_{p_7}^1-b_{p_8}^1])\bigr)\bigl (t-\textup{Res}_{F_1}([b_{p_8}^1-b_{p_8}^1]) \bigr)\Bigr)
\end{align*}
is not equal to zero. One can check by a direct but tedious computation that the discriminant of a quadratic polynomial
\begin{align*}
&\frac{1}{t}\Bigl((t-a_{12}a_{23}a_{13})(t-a_{12}a_{24}a_{14})(t-a_{13}a_{34}a_{14})(t-a_{24}a_{34}a_{23})\\
-&(t-a_{12}a_{23}a_{34}a_{14})(t-a_{13}a_{23}a_{24}a_{14})(t-a_{12}a_{24}a_{34}a_{13})(t-1)\Bigr)
\end{align*}
is equal to 
\[
16\left(\prod_{i<j}a_{ij}\right)^2
\begin{vmatrix}
1 & \dfrac{a_{12}+\dfrac{1}{a_{12}}}{2}& \dfrac{a_{13}+\dfrac{1}{a_{13}}}{2}&\dfrac{a_{14}+\dfrac{1}{a_{14}}}{2} \\
 \dfrac{a_{12}+\dfrac{1}{a_{12}}}{2} & 1 &\dfrac{a_{23}+\dfrac{1}{a_{23}}}{2}&\dfrac{a_{24}+\dfrac{1}{a_{24}}}{2}\\
 \dfrac{a_{13}+\dfrac{1}{a_{13}}}{2} & \dfrac{a_{23}+\dfrac{1}{a_{23}}}{2} & 1&\dfrac{a_{34}+\dfrac{1}{a_{34}}}{2}\\
\dfrac{a_{14}+\dfrac{1}{a_{14}}}{2}  & \dfrac{a_{24}+\dfrac{1}{a_{24}}}{2}  & \dfrac{a_{34}+\dfrac{1}{a_{34}}}{2} & 1\\
\end{vmatrix},
\]
which implies that $\det(\textup{Res}_{F_1})\neq 0.$
\end{proof}

\begin{proposition}\label{PropositionModuliSurfaces} 
The set $\textup{M}_{\text{surf}}$ of isomorphism classes of generic marked $D_6$-surfaces has a structure of an algebraic variety, which is an unramified double cover of  $\mathbb{T}.$ 
\label{ModSerf}
\end{proposition}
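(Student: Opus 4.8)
The plan is to mirror the structure of the proof of Proposition \ref{PropositionModuliTetrahedra}: exhibit the period map $\textup{Res}_{F_1}$ as the ``coordinate'' on the moduli space and show that it realizes $\textup{M}_{\textup{surf}}$ as a double cover of $\mathbb{T}$. By Proposition \ref{PropositionDeterminant} the assignment $(X,F_1,F_2)\longmapsto \textup{Res}_{F_1}$ already lands in $\mathbb{T}$, so the content of the statement is: (i) this map is surjective, (ii) its fibers consist of exactly two isomorphism classes, and (iii) the resulting map is étale of degree $2$, so that the algebraic structure on $\mathbb{T}$ pulls back to one on $\textup{M}_{\textup{surf}}$. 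Injectivity up to the two-element ambiguity and surjectivity together amount to a Torelli-type statement for marked $D_6$-surfaces, which is exactly the sort of result recorded by Looijenga \cite{Loo81} for anticanonical pairs; I would invoke \cite[\S 5]{Loo81} in the form: the period map $\textup{Res}_{F_1}\colon \textup{Q}\bigl(\textup{E}_7^{\textup L}\bigr)\lra\C^\times$ determines the marked surface up to the finite ambiguity coming from automorphisms preserving the marking.

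First I would pin down the fiber of the period map. Given $\textup{L}\in\mathbb{T}$, I want to reconstruct a generic marked $D_6$-surface $X$ with $\textup{Res}_{F_1}=\textup{L}$. The natural route is through the conic bundle picture of \S\ref{SectionConicBundleGeneric}: the values of $\textup{L}$ on the explicit roots $\pi([b_{p_i}^1-b_{p_8}^1])$ listed at the end of \S\ref{SectionConicBundleGeneric} give eight points $p_1,\dots,p_8\in\mathbb{P}^1$ (after normalizing $t_1$ so that $b(F_1^x)=0$, $b(F_1^y)=\infty$, $p_8=1$), and these eight points together with the rule for $C_{X,b}^{\textup L}$ in Corollary \ref{CorollaryConicFunction} determine the branch data of an admissible conic bundle over $\mathbb{P}^1$ whose total space is the desired surface. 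Concretely, one builds $X$ as the blow-up of $\Sigma_2$ (the recipe implicit in Lemma \ref{LemmaRelationInPicard} run in reverse) at the eight points lying over $p_1,\dots,p_8$, two on each fiber in a prescribed way, takes $F_{11},F_{12},F_{21},F_{22}$ to be the appropriate sections, and checks $F_1\cup F_{12}$, $F_2\cup F_{22}$ are $I_2$-fibers of $|-K_X|$; genericity of $X$ follows from genericity of $\textup{L}$ via Lemma \ref{LemmaGenericSurface}. The only free choice in this reconstruction is the labelling of the two nodal points of $F_1$ — equivalently, the choice between $t_1$ and $1/t_1$ after the normalization — which accounts for exactly a $\mathbb{Z}/2$ ambiguity, the analogue of the orientation of $Q$ in Proposition \ref{PropositionModuliTetrahedra}. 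Hence each $\textup{L}\in\mathbb{T}$ has precisely two preimages.

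Next I would argue that the construction is algebraic and the double cover is unramified. The coordinates of $p_1,\dots,p_8$ are rational functions of the values of $\textup{L}$ on roots, so the blow-up data varies algebraically over $\mathbb{T}$; this produces a family of $D_6$-surfaces over a double cover $\widetilde{\mathbb{T}}\lra\mathbb{T}$ and hence a morphism $\widetilde{\mathbb{T}}\lra\textup{M}_{\textup{surf}}$. It is bijective on points by the fiber computation of the previous paragraph, and its inverse is given by reading off $\textup{Res}_{F_1}$ and the nodal-point labelling, which is again algebraic; this both equips $\textup{M}_{\textup{surf}}$ with the structure of an algebraic variety and identifies it with $\widetilde{\mathbb{T}}$. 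Since $\widetilde{\mathbb{T}}\lra\mathbb{T}$ is the nontrivial connected double cover obtained by adjoining a square root tied to the swap of the two nodal points — and this swap never fixes a point over $\mathbb{T}$ because the two nodal points $F_1^x,F_1^y$ of an $I_2$-fiber are always distinct — the cover is unramified.

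The main obstacle is the Torelli input: showing that $\textup{Res}_{F_1}$ together with the marking determines $(X,F_1,F_2)$ up to the nodal-point swap, i.e. that the reconstruction above recovers \emph{the same} surface one started with and not merely \emph{a} surface with the right periods. This is where \cite[\S 5]{Loo81} does the heavy lifting — the moduli of anticanonical pairs are governed by exactly such period data — but one must check that the $D_6$-surface situation, with its two marked $I_2$-fibers and the splitting $\textup{Pic}(X)=\langle s_0,f\rangle\oplus\langle s_0,f\rangle^\perp$, fits the hypotheses there, and that the residual finite ambiguity in Looijenga's statement is cut down to precisely $\mathbb{Z}/2$ by the data of the marking (the ordering of the fiber components plus the isometry $m$). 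I expect this to be the only genuinely delicate point; the rest is bookkeeping with the conic bundle of \S\ref{SectionConicBundleGeneric} and the intersection-theoretic identities of Lemma \ref{LemmaRelationInPicard}.
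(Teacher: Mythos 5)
Your overall plan (map to $\mathbb{T}$ via $\textup{Res}_{F_1}$, show surjectivity, show each fiber has exactly two elements via a Torelli-type statement) is the paper's plan, but there are two genuine problems in the execution. First, you misidentify the $\mathbb{Z}/2$ ambiguity: swapping the nodal points $F_1^x,F_1^y$ is \emph{not} a deck transformation of the cover, because by the explicit formula (\ref{FormulaExplicitPeriod}) it replaces every cross-ratio by its inverse, i.e.\ it sends $\textup{Res}_{F_1}$ to $\textup{Res}_{F_1}^{-1}$, which for generic $\textup{L}\in\mathbb{T}$ is a \emph{different} point of $\mathbb{T}$. The correct residual choice is the labelling of the nodal points of the \emph{second} fiber $F_2$, which does not enter $\textup{Res}_{F_1}$ at all; this is exactly what the paper's proof extracts from the Torelli theorem, and it is what makes the analogy with Proposition \ref{PropositionModuliTetrahedra} work (the orientation of $Q$ changes the angle data $\textup{A}_T$, not $\textup{L}_T$, just as the $F_2$-node labelling changes $\textup{Res}_{F_2}$, not $\textup{Res}_{F_1}$). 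Your unramifiedness argument, being phrased in terms of the $F_1$-node swap, inherits this error.

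Second, the step you defer as ``the main obstacle'' is in fact the substance of the proof, and the paper carries it out: given two generic marked $D_6$-surfaces with the same period point, one builds the isometry $\psi$ of Picard lattices from the decompositions $\textup{Pic}(X)=\langle s_0,f\rangle\oplus\langle s_0,f\rangle^{\perp}$ and the markings, checks that it preserves roots, the positive cone and the nodal roots (this last point uses genericity, which pins down the only $(-2)$-classes orthogonal to $F_{11},F_{12}$), and that it commutes with the period maps; then \cite[Theorem 5.3]{Loo81} produces an isomorphism matching $F_1^x,F_1^y$, leaving only the $F_2$-node labelling free. Without this verification the fiber count, and hence the double-cover structure, is unproved. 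Your alternative route to surjectivity (reconstructing $X$ from the eight points via a blow-up of $\Sigma_2$, reversing Lemma \ref{LemmaRelationInPicard}) could in principle replace the paper's appeal to \cite[Proposition 5.5]{Loo81}, but as written it is only a sketch and contains slips: eight blow-ups means one point over each $p_i$, not ``two on each fiber,'' and the centers must be located on prescribed sections (the images of $F_{11}$ and, for four of them, $F_{22}$) with the incidences needed to produce the two $I_2$ fibers of $|-K_X|$ — none of which is pinned down by the eight base points alone without further argument.
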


\begin{proof}
Consider the map that associates to a marked $D_6$-surface $X$  the period map $\textup{Res}_{F_1}\in \mathbb{T}.$ From \cite[Proposition 5.5]{Loo81} it follows that this map is surjective.  

Next, consider two marked $D_6$-surfaces $X$ and $X'$ with $\textup{Res}_{F_1}=\textup{Res}_{F_1'}.$ Recall the orthogonal decompositions
\begin{align*}
&\textup{Pic}(X_1)=\langle s_0, f \rangle \oplus \bigl(f^{\perp}/f\bigr),\ \ \ \ \textup{Pic}(X_2)=\langle s_0', f' \rangle \oplus \bigl(f'^{\perp}/f'\bigr).
\end{align*}
Define an isometry $\psi \colon \textup{Pic}(X_1) \lra \textup{Pic}(X_2)$
by a rule $\psi(s_0)=s_0',\ \  \psi(f)=f'$ and $\psi = m_2 \circ m^{-1}_1$ on $f^{\perp}/f$. 
Now we are in a position to apply the Torelli theorem, see \cite[Theorem 5.3]{Loo81}. Indeed, $(X,F)$ and  $(X,F')$ are smooth rational surfaces endowed with negative oriented anti-canonical cycles $F_{11}+F_{12}$ and $F_{11}'+F_{12}'.$ The isometry $\psi$ sends roots to roots,  the positive cone to the positive cone, and  $\psi$ commutes with period maps. Since $X$ is a generic $D_6$-surface, it has only four $-2$-curves, so the lattice $\langle F_{11},F_{12}\rangle^\perp$ has two nodal roots, namely $\bigl(f,m_1(e_\varnothing)\bigr)$ and $\bigl(0,-m_1(e_\varnothing)\bigr);$ the same for $X'.$ Since $\psi\bigl(f,m_1(e_\varnothing)\bigr)=(f',m_2(e_\varnothing))$ and $\psi\bigl(0,-m_1(e_\varnothing)\bigr)=\bigl(0,-m_2(e_\varnothing)\bigr),$ isometry $\psi$ sends nodal roots to nodal roots. By \cite[Theorem 5.3]{Loo81} $\psi$ must be induced by an isomorphism $\Psi \colon X_1 \lra X_2$ such that $\Psi(F_1^x)=(F_1^{x})'$ and $\Psi(F_1^y)=(F_1^{y})'.$ The only ambiguity is in the action of $\Psi$ on the marking of nodal points of the second special fiber. From here the statement follows.
\end{proof}

\section{Correspondence between tetrahedra and  $D_6$-surfaces}\label{SecC}

\subsection{Proofs of Theorem \ref{TheoremProjectiveTetrahedra} and Theorem \ref{TheoremCorrespondence}}

In \S \ref{SectionModuliTetrahedra} we defined the moduli space of generic marked projective tetrahedra $M_{tetr}$ and in \S  \ref{SectionModuliSurfaces} we defined the moduli space of generic marked $D_6$-surfaces $M_{surf}.$ Propositions \ref{PropositionModuliTetrahedra} and \ref{PropositionModuliSurfaces} imply that both $M_{tetr}$ and $M_{surf}$ are unramified double covers of the same variety $\mathbb{T}.$ In \S \ref{SectionConstruction} we construct a rational map  
\[
Cor \colon M_{tetr} \dashrightarrow M_{surf}
\]
 sending  $T$ to  $(X_T,F_1,F_2)$. In \S \ref{SectionEqualityLengths} we show that this map commutes with projections to $\mathbb{T}.$ It is easy to see that a rational map $U\dashrightarrow V,$ which commutes with {\'e}tale maps $U\lra X$ and   $V\lra X$ can be extended to a morphism, so the rational map $Cor$ can be extended to a morphism 
 \[
 \textup{Cor}\colon M_{tetr} \lra M_{surf}.
 \]
From the construction it follows immediately that  $\textup{Cor}$ is equivariant with respect to the deck transformations of the coverings, so  $ \textup{Cor}$ is an isomorphism. In \S \ref{SectionEqualityAngles} we prove the equality $\textup{Res}_{F_2}=\textup{A}_T,$ which finishes the proof of Theorem \ref{TheoremCorrespondence}.
 
Finally, we show that Theorem \ref{TheoremCorrespondence} implies Theorem \ref{TheoremProjectiveTetrahedra}. Indeed, Cho-Kim functions of a tetrahedron (see Definition \ref{DefinitionChoKim}) coincide with conic bundle functions of the corresponding $D_6$-surface, so Theorem  \ref{TheoremProjectiveTetrahedra} follows from Corollary \ref{CorollaryConicFunction}.

\subsection{The construction of the correspondence}\label{SectionConstruction}
Our goal in this section is to construct a rational map 
\[
Cor \colon M_{tetr} \dashrightarrow M_{surf}.
\]
 Since we are constructing only a rational map, we assume that vertices of $T$ are in general position in the sense specified in the course of the proof.

Consider a marked projective tetrahedron $T=(Q,\{H_1 \ldots, H_4\}).$ We construct the rational elliptic surface $(X_T, F_1, F_2)$ in two steps. First, we blow up the quadric $Q$  at the twelve points $E_{ij}$ and obtain a rational surface $R_T.$ We denote the class of the preimage of the exceptional divisor associated to the blow up at $E_{ij}$ by  $[E_{ij}]$ and the strict transforms of the generators of the quadric by $[L]$ and $[R].$ The surface $X_T$  is obtained from $R_T$ by  blowing down the following four $(-1)-$curves:
\begin{align*}
&[E_{21}],\\
&[L]-[E_{12}],\\
&[R]-[E_{12}],\\
&[L]+[R]-[E_{34}]-[E_{43}]-[E_{12}].
\end{align*}
We may assume that these lines are pairwise disjoint. Denote by $g\colon Q \dashrightarrow X_T$ the resulting rational map. 

\begin{lemma} \label{LemmaSurfaceD6}
The triple $\Bigl(X_T,g(H_3\cap H_4\cap Q),g(H_1\cap H_2\cap Q)\Bigr)$ is a $D_6$-surface.
\end{lemma}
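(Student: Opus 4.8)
The statement asserts that a specific rational surface $X_T$, built from a marked projective tetrahedron $T$ by blowing up $Q\cong\mathbb{P}^1\times\mathbb{P}^1$ at the twelve points $E_{ij}$ and then blowing down four pairwise disjoint $(-1)$-curves, is a $D_6$-surface, i.e.\ a rational elliptic surface carrying two $I_2$-fibers. The natural strategy is: (1) verify that $R_T$ is a rational surface and that the four listed classes really are pairwise disjoint $(-1)$-curves, so that the blow-down $g\colon Q\dashrightarrow X_T$ produces a smooth rational surface with $K_{X_T}^2 = 8 - 12 + 4 = 0$; (2) exhibit an elliptic fibration on $X_T$, i.e.\ show $|-K_{X_T}|$ is a pencil with a member of arithmetic genus $1$, equivalently that $X_T$ is obtained by blowing up the nine base points of a pencil of plane cubics; and (3) identify the two degenerate fibers $g(H_3\cap H_4\cap Q)$ and $g(H_1\cap H_2\cap Q)$ as $I_2$-fibers, i.e.\ each is a union of two smooth rational $(-2)$-curves meeting transversally at two points, lying in $|-K_{X_T}|$.

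For step (2), the cleanest route is to produce the anticanonical pencil on $R_T$ directly. On $Q$ the anticanonical class is $2[L]+2[R]$ (a curve of bidegree $(2,2)$); after blowing up the twelve points $E_{ij}$, the anticanonical class of $R_T$ is $2[L]+2[R]-\sum_{1\le i<j\le4}([E_{ij}]+[E_{ji}])$. The curves $H_i\cap Q$ are smooth conics on $Q$ of bidegree $(1,1)$ passing through the four points $E_{ij}, E_{ik}, E_{il}$ (the points where the three edges through $A_i$ meet $Q$). A judicious choice of pairs of these conics—say $(H_1\cap Q)\cup(H_2\cap Q)$ and $(H_3\cap Q)\cup(H_4\cap Q)$, each a $(2,2)$-curve through the appropriate eight of the twelve points—should give two anticanonical divisors on $R_T$ spanning a pencil whose nine further base points (on $Q$ these are the remaining intersection points, e.g.\ the points of $(H_1\cap Q)\cap(H_3\cap Q)$ etc.) need to be accounted for; blowing down the four $(-1)$-curves collapses the count appropriately so that $X_T$ carries a genus-$1$ pencil. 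I would instead argue more structurally: after the blow-down, compute that $-K_{X_T}$ is the class of $g(H_1\cap H_2\cap Q)$ (the proper transform of $(H_1\cap Q)\cup(H_2\cap Q)$), check $(-K_{X_T})^2=0$, and invoke the standard fact (stated in \S\ref{SectionRationalElliptic}) that a smooth rational surface with $K^2=0$ and an effective anticanonical divisor whose class is base-point-free of square $0$ is a rational elliptic surface; the zero section is furnished by the image of one of the $(-1)$-curves on $R_T$ disjoint from the blown-down ones.

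For step (3), it remains to check that both $g((H_1\cap Q)\cup(H_2\cap Q))$ and $g((H_3\cap Q)\cup(H_4\cap Q))$ are of type $I_2$. Each $H_i\cap Q$ is a smooth $(1,1)$-curve, hence $\cong\mathbb{P}^1$ with self-intersection $2$ on $Q$; after passing through the blown-up points its proper transform on $R_T$ has self-intersection $2 - (\text{number of }E_{ij}\text{ on it})$, and on $X_T$ this is further modified by the blow-downs. A bookkeeping computation with the intersection table should give self-intersection $-2$ for each of the four relevant proper transforms, that $g(H_1\cap Q)$ and $g(H_2\cap Q)$ meet transversally in two points (the two points of $(H_1\cap Q)\cap(H_2\cap Q)$ on $Q$, none of which is an $E_{ij}$ by the general-position and non-degeneracy hypotheses in Definition~\ref{DefinitionProjectiveTetrahedra}), and similarly for the other pair. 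This exhibits the two $I_2$ configurations; that they are fibers follows because they are anticanonical and disjoint from the zero section's complementary structure.

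\textbf{Main obstacle.} The delicate part is step (2): establishing that $X_T$ genuinely carries an elliptic fibration, i.e.\ that the relevant anticanonical class on $X_T$ is nef and of self-intersection zero and that blowing down exactly those four specified $(-1)$-curves (rather than some other four) lands one on a rational elliptic surface. This requires care because the blow-down is, as the authors note, not canonical, and one must verify that the four curves listed are indeed simultaneously contractible (pairwise disjoint, each a $(-1)$-curve) under the general-position assumptions on the vertices of $T$—this is the ``we may assume these lines are pairwise disjoint'' clause, which hides a genericity condition that must be made precise. The bulk of the remaining work is a routine but lengthy intersection-number computation on $\mathrm{Pic}(R_T) = \mathbb{Z}\langle [L],[R],[E_{ij}]\rangle$, which I would organize by first recording the intersection form there and then tracking how each relevant class changes under the four contractions.
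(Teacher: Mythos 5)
There is a genuine gap, and it sits exactly at the point your plan relies on for the $I_2$ structure. Your step (3) asserts that $g(H_1\cap Q)$ and $g(H_2\cap Q)$ meet in the two points of $(H_1\cap Q)\cap(H_2\cap Q)$ on $Q$, ``none of which is an $E_{ij}$.'' This is false: two of the faces' conics always meet along the common edge, so $(H_1\cap Q)\cap(H_2\cap Q)=H_1\cap H_2\cap Q=\{E_{34},E_{43}\}$ and $(H_3\cap Q)\cap(H_4\cap Q)=\{E_{12},E_{21}\}$ --- these are among the twelve blown-up points, so on $R_T$ the proper transforms in each pair are \emph{disjoint}. (Relatedly, your incidence bookkeeping is off earlier: $H_i\cap Q$ passes through the six points $E_{jk},E_{kj}$ with $i\notin\{j,k\}$, the points on the edges of the opposite face, not through points on the edges emanating from $A_i$; this matters for the self-intersection count $2-6=-4$ on $R_T$.) The two nodes of each $I_2$-fiber are therefore \emph{not} inherited from $Q$; they are created by the second step of the construction: each of the four contracted $(-1)$-curves meets both members of exactly one pair of conic transforms (e.g.\ $[E_{21}]$ and $[L]+[R]-[E_{12}]-[E_{34}]-[E_{43}]$ each meet the transforms of $H_3\cap Q$ and $H_4\cap Q$ once, while $[L]-[E_{12}]$ and $[R]-[E_{12}]$ each meet the transforms of $H_1\cap Q$ and $H_2\cap Q$ once), so the blow-down reattaches each pair at two distinct points --- precisely the points labelled $F_1^x,F_1^y,F_2^x,F_2^y$ in the paper. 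As written, your justification that the fibers are of type $I_2$ (two components, two distinct transversal intersection points) fails; it can be repaired by this intersection-number computation on $\mathrm{Pic}(R_T)$, which also restores your $-2$ self-intersection claim ($-4$ on $R_T$ plus two blow-down corrections).

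By contrast, the paper sidesteps all of this by passing to a second model: on $X_T$ the eight curves $[E_{13}],[E_{31}],[E_{14}],[E_{41}],[E_{23}],[E_{32}],[E_{24}],[E_{42}]$ are pairwise disjoint $(-1)$-curves; contracting them gives a surface which is identified with $\mathbb{P}^1\times\mathbb{P}^1$ (the image of the strict transform of $H_1\cap Q$ has self-intersection $2$), and the eight contracted points lie on the pair of reducible $(2,2)$-curves coming from $(H_1\cup H_2)\cap Q$ and $(H_3\cup H_4)\cap Q$. Thus $X_T$ is the blow-up of $\mathbb{P}^1\times\mathbb{P}^1$ at the base points of an anticanonical pencil with two reducible members, which simultaneously yields the elliptic fibration, the anticanonical class identification you wanted in step (2), and the two $I_2$-fibers, without having to track the non-canonical choice of four contractions through the anticanonical class. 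If you prefer your direct route, you must (i) verify on $\mathrm{Pic}(X_T)$ that both images $g\bigl((H_1\cup H_2)\cap Q\bigr)$ and $g\bigl((H_3\cup H_4)\cap Q\bigr)$ are anticanonical (they are not anticanonical on $R_T$; the discrepancy is absorbed by the four contractions), and (ii) replace the false ``nodes survive from $Q$'' claim by the reattachment argument above; the two nodes are distinct because the two contracted curves producing them are disjoint.
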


\begin{proof}
The eight $(-1)$-curves 
\[
[E_{13}],\ [E_{31}], \ [E_{14}],\  [E_{41}],\ [E_{23}],\ [E_{32}], \ [E_{24}],\ [E_{42}],
\] 
are mutually disjoint on $X_T$, so they can be blown down simultaneously. By doing so, we obtain a del Pezzo surface with Picard number $1.$ The image of the strict transform of $H_1 \cap Q$  has self-intersection number equal to $2,$ so the surface is isomorphic to $\mathbb{P}^1 \times \mathbb{P}^1.$ The images of the curves $[E_{ij}]$ lie on a pair of reducible $(2,2)$-curves, which are the strict transforms of $(H_1 \cup H_2)\cap Q$ and $(H_3 \cup H_4)\cap Q,$ so $X_T$ is a rational elliptic surface with a pair of  $I_2-$fibers. 
\end{proof}

We introduce the following notation for the components of the fibers:
\begin{align*}
&F_{11}:=g(H_3\cap Q),\\
&F_{12}:=g(H_4\cap Q),\\
&F_{21}:=g(H_1\cap Q),\\
&F_{22}:=g(H_2\cap Q).
\end{align*}
and singular points
\begin{align*}
&F_1^x:=g([E_{21}]),\\
&F_1^y:=g([L]+[R]-[E_{12}]-[E_{34}]-[E_{43}]),\\
&F_2^x:=g([L]-[E_{12}]),\\
&F_2^y:=g([R]-[E_{12}]).
\end{align*}

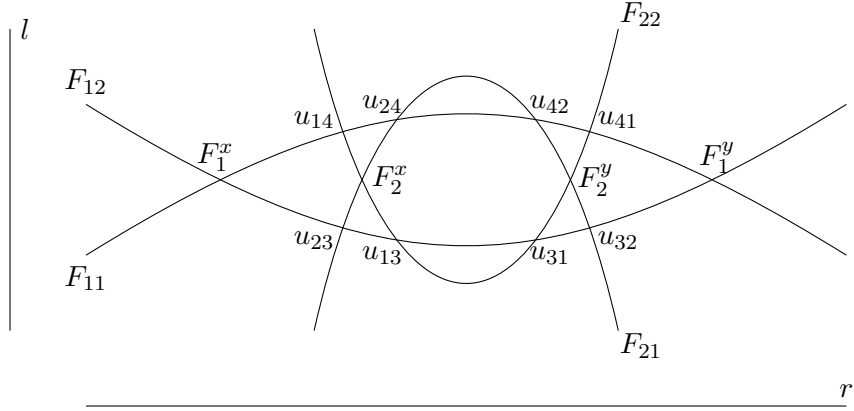
\begin{figure}
\centering
\begin{tikzpicture}
\draw (0,1) .. controls (4,3.5) and (6,3.5) .. (10,1);
\draw (0,3) .. controls (4,0.5) and (6,0.5) .. (10,3);
\draw (3,0) .. controls (4,4.5) and (6,4.5) .. (7,0);
\draw (3,4) .. controls (4,-0.5) and (6,-0.5) .. (7,4);
\draw (0,-1) -- (10,-1);
\draw (-1,0) -- (-1,4);
\draw (0,3.3) node{$F_{12}$};
\draw (0,0.7) node{$F_{11}$};
\draw (7.3,4.2) node{$F_{22}$};
\draw (7.3,-0.2) node{$F_{21}$};
\draw (1.7,2.3) node{$F_1^x$};
\draw (8.3,2.3) node{$F_1^y$};
\draw (4,2) node{$F_2^x$};
\draw (6.7,2) node{$F_2^y$};
\draw (-0.8,4) node{$l$};
\draw (10,-0.8) node{$r$};
\draw (3,2.8) node{$u_{14}$};
\draw (7,2.8) node{$u_{41}$};
\draw (3.9,3) node{$u_{24}$};
\draw (6.1,3) node{$u_{42}$};
\draw (3,1.2) node{$u_{23}$};
\draw (7,1.2) node{$u_{32}$};
\draw (3.9,1) node{$u_{13}$};
\draw (6.1,1) node{$u_{31}$};
\end{tikzpicture}
\caption{A pencil of $(2,2)$ curves on $\mathbb{P}^1\times \mathbb{P}^1$. Surface $X_T$ is obtained by blowing up points $u_{ij}.$}
\label{FigureEye}
\end{figure}

\begin{lemma}
The Picard lattice of $X_T$ is generated by the classes 
\begin{align*}
&l:=g([L]+[R]-[E_{12}]-[E_{34}]),\\
&r:=g([L]+[R]-[E_{12}]-[E_{43}]),\\
&u_{13}:=g([E_{13}]), \  u_{31}:=g([E_{31}]), \ u_{14}:=g([E_{14}]), \ u_{41}:=g([E_{41}]),\\
&u_{23}:=g([E_{23}]), \  u_{42}:=g([E_{42}]), \ u_{24}:=g([E_{24}]), \ u_{43}:=g([E_{43}]).
\end{align*}
The pairing is given on the classes by
\begin{align*}
&l^2=r^2=0,\  l.r=1, \ l.u_{ij}=r.u_{ij}=0,\\
& u_{ij}.u_{kl}=
\begin{cases}
&-1 \text{ if } u_{ij}=u_{kl}\\ 
&-1 \text{ if } u_{ij}\neq u_{kl}\\
\end{cases}.
\end{align*}
The canonical class is equal to
\[
K_X=-2l-2r+u_{13}+u_{31}+u_{14}+u_{41}+u_{23}+u_{32}+u_{24}+u_{42}.
\]
\end{lemma}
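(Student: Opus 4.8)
The plan is to compute the Picard lattice of $X_T$ directly by chasing through the two-step birational construction $Q \dashrightarrow R_T \dashrightarrow X_T$, keeping careful track of intersection numbers. First I would recall the geometry of the blow-up $R_T \to Q$: on $Q \cong \mathbb{P}^1\times\mathbb{P}^1$ the Picard group is generated by the two rulings $[L], [R]$ with $[L]^2=[R]^2=0$, $[L].[R]=1$, and blowing up the twelve points $E_{ij}$ introduces twelve orthogonal exceptional classes $[E_{ij}]$ with $[E_{ij}]^2=-1$, each orthogonal to the strict transforms of the rulings. So $\textup{Pic}(R_T)$ has rank $14$. The canonical class of $R_T$ is $K_{R_T} = -2[L]-2[R]+\sum_{i\neq j}[E_{ij}]$, since $K_Q = -2[L]-2[R]$ and each blow-up adds the exceptional class.

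Next I would pass from $R_T$ to $X_T$ by blowing down the four stated $(-1)$-curves $[E_{21}]$, $[L]-[E_{12}]$, $[R]-[E_{12}]$, and $[L]+[R]-[E_{34}]-[E_{43}]-[E_{12}]$. One should first verify these four classes are indeed pairwise disjoint $(-1)$-curves (checking $C^2=-1$ and $C.K=-1$ and $C_i.C_j=0$ for $i\neq j$, using the intersection data on $R_T$) — this is exactly the "general position" assumption being invoked. Blowing down reduces the rank by $4$, giving $\textup{Pic}(X_T)$ of rank $10$, as expected for a rational elliptic surface. The key computation is then to express the ten proposed generators $l, r, u_{13}, u_{31}, u_{14}, u_{41}, u_{23}, u_{32}, u_{24}, u_{42}$ — which are images under $g$ of explicit classes on $R_T$ — and check (a) that they lie in $\textup{Pic}(X_T)$, i.e. are orthogonal to all four blown-down classes, (b) that their pairwise intersection numbers match the stated Gram matrix, and (c) that they span $\textup{Pic}(X_T)$, e.g. by verifying the Gram matrix has determinant $\pm 1$ so the sublattice is unimodular of the right rank, hence everything. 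For the canonical class, I would push forward $K_{R_T}$: since blowing down a $(-1)$-curve $E$ sends $K$ to $K - E$ at the level of pullbacks, I would write $K_{X_T}$ in terms of $g_*$ of the generators of $\textup{Pic}(R_T)$ and then re-express in the $l, r, u_{ij}$ basis, arriving at $K_X = -2l-2r+\sum u_{ij}$; as a sanity check $K_X^2 = 0$ and $K_X.u_{ij}=0$, consistent with $-K_X$ being the fiber class of the elliptic fibration found in Lemma~\ref{LemmaSurfaceD6}.

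The main obstacle I anticipate is purely bookkeeping: correctly computing intersection numbers of strict transforms across the two birational maps, because $g$ is only a birational map (it both blows up and blows down), so $g_*$ and $g^*$ do not simply commute, and classes like $l = g([L]+[R]-[E_{12}]-[E_{34}])$ must be interpreted as strict transforms, requiring one to subtract off the appropriate multiplicities of the blown-down exceptional curves. The cleanest route is probably to do all computations upstairs on $R_T$, where everything is a genuine blow-up of a smooth surface and intersection theory is transparent, identify $\textup{Pic}(X_T)$ with the orthogonal complement inside $\textup{Pic}(R_T)$ of the span of the four blown-down classes, and then simply observe that the proposed generators, viewed as classes in $R_T$, are precisely a basis of this orthogonal complement with the asserted Gram matrix. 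The statement about $u_{ij}.u_{kl}$ in the excerpt has an evident typo (both cases read $-1$; it should be $-1$ on the diagonal and $0$ off-diagonal, so that the $u_{ij}$ together with $l,r$ form a standard basis), and I would state it in the corrected form.
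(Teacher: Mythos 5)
Your plan is correct and would give a complete proof, but it is not the route the paper takes. The paper disposes of the lemma in one sentence by invoking the second birational model produced in the proof of Lemma \ref{LemmaSurfaceD6}: the eight disjoint $(-1)$-curves $u_{ij}$ on $X_T$ are blown down to exhibit $X_T$ as a blow-up of $\mathbb{P}^1\times\mathbb{P}^1$ at the eight points shown in Figure \ref{FigureEye}, with $l$ and $r$ the pullbacks of the two rulings and the $u_{ij}$ the exceptional classes; then generation, the Gram matrix, and $K_X=-2l-2r+\sum u_{ij}$ are the textbook formulas for an eight-point blow-up of $\mathbb{P}^1\times\mathbb{P}^1$, with no discriminant argument needed (the only thing to recognize is that the images of $l,r$ downstairs are the rulings, which one sees because they are irreducible, have square $0$, meet each other once, and miss the $u_{ij}$). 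You instead stay on the twelve-point blow-up $R_T$, identify $\textup{Pic}(X_T)$ with the orthogonal complement of the four contracted $(-1)$-classes, check that the ten stated classes lie in that complement --- which is also exactly why the strict-transform worry you raise evaporates, since orthogonality to the contracted classes gives $\pi^*\pi_*=\mathrm{id}$ on them --- and get spanning from unimodularity of the Gram matrix; the canonical class then comes from $\pi^*K_{X_T}=K_{R_T}-\sum_i C_i$, and this computation does land on the stated formula. The paper's route buys brevity and the geometrically useful point that $X_T$ has a second presentation as an eight-point blow-up of $\mathbb{P}^1\times\mathbb{P}^1$; your route buys a direct verification of the classes exactly as written, at the cost of bookkeeping in the rank-$14$ lattice and the index-one argument. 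Your two editorial corrections are right and consistent with the rest of the paper: $u_{ij}.u_{kl}$ should be $0$ when $u_{ij}\neq u_{kl}$, and the generator list should contain $u_{32}:=g([E_{32}])$ rather than $u_{43}$, as both your basis and the canonical-class formula reflect.
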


\begin{figure}
\centering
\begin{tikzpicture}

\draw (0,4) -- (12,4);
\draw (4,4) -- (4,2);

\draw (0,4) node[circle,fill=black,inner sep=0pt,minimum size=3pt]{};
\draw (0,4.4) node{\mbox{\scriptsize $-l-r+u_{24}+u_{42}+u_{14}+u_{41}$}};

\draw (2,4) node[circle,fill=black,inner sep=0pt,minimum size=3pt]{};
\draw (1.7,3.6) node{\mbox{\scriptsize $-l+u_{13}+u_{31}$}};

\draw (4,4) node[circle,fill=black,inner sep=0pt,minimum size=3pt]{};
\draw (4,4.4) node{\mbox{\scriptsize $l-u_{24}-u_{13}$}};

\draw (4,2) node[circle,fill=black,inner sep=0pt,minimum size=3pt]{};
\draw (4,1.6) node{\mbox{\scriptsize $l+r-u_{23}-u_{31}-u_{14}-u_{42}$}};

\draw (6,4) node[circle,fill=black,inner sep=0pt,minimum size=3pt]{};
\draw (6,3.6) node{\mbox{\scriptsize $l+r-u_{31}-u_{32}-u_{41}-u_{42}$}};

\draw (8,4) node[circle,fill=black,inner sep=0pt,minimum size=3pt]{};
\draw (8,4.4) node{\mbox{\scriptsize $r-u_{13}-u_{14}$}};

\draw (10,4) node[circle,fill=black,inner sep=0pt,minimum size=3pt]{};
\draw (10,3.6) node{\mbox{\scriptsize $l+r-u_{23}-u_{31}-u_{41}-u_{24}$}};

\draw (12,4) node[circle,fill=black,inner sep=0pt,minimum size=3pt]{};
\draw (12,4.4) node{\mbox{\scriptsize $l-u_{32}-u_{14}$}};

\draw (0,0) -- (12,0);
\draw (4,0) -- (4,-2);

\draw (0,0) node[circle,fill=black,inner sep=0pt,minimum size=3pt]{};
\draw (0,0.4) node{\mbox{\small $-e_{I}$}};

\draw (2,0) node[circle,fill=black,inner sep=0pt,minimum size=3pt]{};
\draw (2,-0.4) node{\mbox{\scriptsize $\dfrac{-e_{34}+e_{12}+e_{\varnothing}+e_I}{2}$}};

\draw (4,0) node[circle,fill=black,inner sep=0pt,minimum size=3pt]{};
\draw (4,0.4) node{\mbox{\scriptsize $\dfrac{e_{13}+e_{34}-e_{12}-e_{24}}{2}$}};

\draw (4,-2) node[circle,fill=black,inner sep=0pt,minimum size=3pt]{};
\draw (4,-2.4) node{\mbox{\scriptsize $\dfrac{e_{23}+e_{24}-e_{13}-e_{14}}{2}$}};

\draw (6,0) node[circle,fill=black,inner sep=0pt,minimum size=3pt]{};
\draw (6,-0.4) node{\mbox{\scriptsize $ \dfrac{e_{14}+e_{24}-e_{13}-e_{23}}{2}$}};

\draw (8,0) node[circle,fill=black,inner sep=0pt,minimum size=3pt]{};
\draw (8,0.4) node{\mbox{\scriptsize $\dfrac{e_{13}-e_{14}-e_{34}-e_{\varnothing}}{2}$}};

\draw (10,0) node[circle,fill=black,inner sep=0pt,minimum size=3pt]{};
\draw (10,-0.4) node{\mbox{\scriptsize $\dfrac{e_{14}+e_{23}-e_{13}-e_{24}}{2}$}};

\draw (12,0) node[circle,fill=black,inner sep=0pt,minimum size=3pt]{};
\draw (12,0.4) node{\mbox{\scriptsize $\dfrac{e_{12}+e_{34}-e_{14}-e_{23}}{2}$}};
\end{tikzpicture}

\caption{Matching root bases of lattices $f^{\perp}/f$ and $\textup{Q}(\textup{E}_8)$. We use the same notation for a class $r\in f^{\perp}$ and its projection $\pi(r)\in f^{\perp}/f.$ 
}
\label{FigureSimpleRoots2}
\end{figure}
\begin{proof}
This description comes from the presentation of $X_T$ as a blow up of $\mathbb{P}^1 \times \mathbb{P}^1$ in eight points described in the proof of Lemma \ref{LemmaSurfaceD6}, see Figure \ref{FigureEye}.
\end{proof}
To define the marking of $X_T$ it remains to construct an isometry between $\textup{Q}(\textup{E}_8)$ and  $f^{\perp}/f.$  We do that by identifying root bases, as in Figure \ref{FigureSimpleRoots2}. This finishes the construction of a rational map $Cor.$


\subsection{The correspondence commutes with length function}\label{SectionEqualityLengths}

\begin{proposition} \label{PropositionEqualityLengths} The map $M_{tetr} \dashrightarrow M_{surf}$ commutes with the projections to $\mathbb{T}.$ In other words, for a (general) marked projective tetrahedron $T$ and the corresponding surface $(X_T,F_1,F_2)$	 maps $\textup{L}_T$ and $\textup{Res}_{F_1}$ coincide.
\end{proposition}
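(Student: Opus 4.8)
The plan is to compute both $\textup{L}_T$ and $\textup{Res}_{F_1}$ on the six classes $e_{ij}$ for $1\le i<j\le 4$, since these together with $e_\varnothing$ generate a finite-index subgroup of $\textup{Q}\bigl(\textup{E}_7^\textup{L}\bigr)$ — in fact, because both maps are homomorphisms and $\textup{L}_T(e_\varnothing)=\textup{Res}_{F_1}(e_\varnothing)=1$, and because the $e_{ij}$ together with $\tfrac12(e_{12}+e_{13}+e_{23}+e_\varnothing)$ etc. span $\textup{Q}\bigl(\textup{E}_7^\textup{L}\bigr)$, it suffices to match the values on a generating set. The cleanest route is to verify directly that
\[
\textup{Res}_{F_1}(e_{ij})=[A_i,E_{ij},A_j,E_{ji}]=\textup{L}_T(e_{ij})
\]
and then check the values agree on the three $4$-element roots $\tfrac12(e_{12}+e_{14}+e_{23}+e_{34})$, $\tfrac12(e_{13}+e_{14}+e_{23}+e_{24})$, $\tfrac12(e_{12}+e_{13}+e_{24}+e_{34})$, since these, the $e_{ij}$, and $e_\varnothing$ generate $\textup{Q}\bigl(\textup{E}_7^\textup{L}\bigr)$ over $\mathbb{Z}$.

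First I would set up the dictionary supplied by the construction in \S\ref{SectionConstruction}. Under the birational map $g\colon Q\dashrightarrow X_T$ we have $F_{11}=g(H_3\cap Q)$, $F_1^x=g([E_{21}])$, $F_1^y=g([L]+[R]-[E_{12}]-[E_{34}]-[E_{43}])$, and the vertices $A_i$ together with the exceptional behaviour of $g$ give rise to sections of the elliptic fibration meeting $F_{11}$. Concretely, I expect the class $e_{ij}\in f^\perp/f$ to be realized as a difference $[s_1]-[s_2]$ of two such sections through the same component of $F_1$, where the sections are the strict transforms (under $g$) of suitable generators of $Q$ or of lines through vertices. Then formula (\ref{FormulaExplicitPeriod}) gives
\[
\textup{Res}_{F_1}(e_{ij})=\bigl[F_1^x,\;s_1\cap F_{11},\;F_1^y,\;s_2\cap F_{11}\bigr]_{F_{11}}.
\]
Because $g$ restricts to an isomorphism $H_3\cap Q\xrightarrow{\ \sim\ }F_{11}$ away from the blown-up/down locus, and because cross-ratios are preserved under such birational identifications (see the conventions on $[P_1,P_2,P_3,P_4]_C$ in \S1.5), this cross-ratio on $F_{11}$ can be transported back to a cross-ratio of four points on the conic $H_3\cap Q$ inside $Q$. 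The main computation is to identify those four points on $Q$ with $A_i,E_{ij},A_j,E_{ji}$ after a further projection, using polar duality and the explicit recipe $E_{ij}'=L_{E_{ij}}\cap R_{E_{ji}}$ for the generators; this is where Lemma \ref{LemmaProjectiveTriangle} (the projective Menelaus computation) and the lift picture to $\widetilde{Q}$ from \S\ref{SectionEdgeLengths} do the real work, pinning down both the value and the sign/branch so that one gets exactly $[\widetilde A_i,E_{ij},\widetilde A_j,E_{ji}]^2$-style products reassembling into $\textup{L}_T(e_{ij})$.

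The step I expect to be the main obstacle is the bookkeeping in this identification: tracing through the blow-up of the twelve $E_{ij}$ and the blow-down of the four $(-1)$-curves $[E_{21}],[L]-[E_{12}],[R]-[E_{12}],[L]+[R]-[E_{34}]-[E_{43}]-[E_{12}]$ to see \emph{which} sections $s_1,s_2$ represent $e_{ij}$, and then matching the resulting cross-ratio on $F_{11}$ with the cross-ratio defining $\textup{L}_T$ without sign errors. A safer organizational device is to avoid choosing sections individually and instead argue as follows: both $\textup{L}_T$ and $\textup{Res}_{F_1}$ are elements of $\textup{Hom}\bigl(\textup{Q}(\textup{E}_7^\textup{L}),\C^\times\bigr)$, both lie in $\mathbb{T}$ (by Lemma \ref{LemmaGenericSurface}/Proposition \ref{PropositionDeterminant} for $\textup{Res}_{F_1}$, and by genericity of $T$ for $\textup{L}_T$), and the construction is manifestly compatible with the canonical coordinates: choosing the homogeneous coordinates on $\mathbb{P}^3$ with $A_i$ the standard basis points as in the proof of Proposition \ref{PropositionModuliTetrahedra}, the quadric $Q$ acquires the normal form written there, and one reads off the points $E_{ij}$ and hence $[A_i,E_{ij},A_j,E_{ji}]$ in terms of the coefficients of $Q$. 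On the surface side, the same coefficients reappear as the cross-ratios computed by (\ref{FormulaExplicitPeriod}) on $F_{11}$, because $F_{11}$ is the strict transform of the conic $H_3\cap Q$ and the four relevant points on it are the intersections with the edges $(A_iA_j)$ up to the identification induced by $g$. Thus I would reduce the whole proposition to a single explicit coordinate check: express $\textup{Res}_{F_1}(e_{ij})$ via (\ref{FormulaExplicitPeriod}) in the chosen coordinates, expand, and observe it equals $[A_i,E_{ij},A_j,E_{ji}]$; equality of the two homomorphisms on the generators $e_{ij}$ and on the three $4$-element roots then forces $\textup{L}_T=\textup{Res}_{F_1}$.
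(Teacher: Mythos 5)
There is a genuine gap, and it sits exactly at the point the paper is careful about. Your reduction rests on the claim that the classes $e_{ij}$, $e_\varnothing$, and the three roots $\tfrac12(e_{12}+e_{14}+e_{23}+e_{34})$, $\tfrac12(e_{13}+e_{14}+e_{23}+e_{24})$, $\tfrac12(e_{12}+e_{13}+e_{24}+e_{34})$ generate $\textup{Q}\bigl(\textup{E}_7^\textup{L}\bigr)$ over $\mathbb{Z}$. They do not: every element of that span has integer coefficient on $e_\varnothing$, whereas $\textup{Q}\bigl(\textup{E}_7^\textup{L}\bigr)$ contains the roots $\tfrac12(e_{ij}+e_{ik}+e_{jk}+e_\varnothing)$, whose $e_\varnothing$-coefficient is $\tfrac12$. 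So your set only generates an index-two sublattice, and agreement of two homomorphisms into $\C^\times$ on a finite-index sublattice does not force agreement on the whole lattice, because $\C^\times$ has $2$-torsion: the two maps could still differ by a sign precisely on the roots $\tfrac12(e_{ij}+e_{ik}+e_{jk}+e_\varnothing)$. This is not a removable technicality — it is the phenomenon flagged in \S\ref{SectionIntroductionTetrahedra}: the values $\textup{L}_T\bigl(\tfrac12(e_{ij}+e_{ik}+e_{jk}+e_\varnothing)\bigr)$ (the ``face perimeter'' quantities) are well defined but are \emph{not} rational functions of the cross-ratios $[A_i,E_{ij},A_j,E_{ji}]=\textup{L}_T(e_{ij})$, so no amount of checking on the $e_{ij}$ (nor your proposed ``single explicit coordinate check'') can settle the equality on those roots; the square-root/branch choice must be confronted directly.

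For comparison, the paper's proof reduces to the roots $\tfrac12(\pm e_{jk}\pm e_{jl}\pm e_{kl}\pm e_\varnothing)$ — which genuinely do generate $\textup{Q}\bigl(\textup{E}_7^\textup{L}\bigr)$ — and then, using the symmetries of the construction of \S\ref{SectionConstruction}, to just two of them. For each it identifies the root as $\pi$ of an explicit difference of sections (e.g.\ $\pi(l-u_{23}-u_{42})$ and $\pi(u_{31}-u_{23})$), evaluates $\textup{Res}_{F_1}$ by (\ref{FormulaExplicitPeriod})--(\ref{FormulaExplicitPeriod2}), transports the cross-ratio through $g^{-1}$ to the conic $H_3\cap Q$ (resp.\ $H_4\cap Q$) and then projects from $E_{12}$ onto an edge line, while on the tetrahedron side Lemma \ref{LemmaProjectiveTriangle} converts the product of three lifted cross-ratios into a single cross-ratio on that edge, with the sign pinned down by the hyperbolic specialization. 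Your outline correctly names these tools (formula (\ref{FormulaExplicitPeriod}), transport of cross-ratios under $g$, Lemma \ref{LemmaProjectiveTriangle}), so the repair is to redirect the verification to a correct generating set containing the $e_\varnothing$-type half-sum roots and carry out the section-by-section computation there; as it stands, the proposed reduction would not prove the proposition even if every computation in it were completed.
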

\begin{proof}
It is easy to see that the lattice $\textup{Q}\bigl(\textup{E}_7^\textup{L}\bigr)$ is generated by the roots 
\begin{align*}
&\frac{\pm e_{23} \pm e_{24}\pm e_{34}\pm e_{\varnothing}}{2}, \ \ \frac{\pm e_{13}\pm e_{14} \pm e_{34}\pm e_{\varnothing}}{2},\ \ \frac{\pm e_{12}\pm e_{13} \pm e_{23}\pm e_{\varnothing}}{2}, \ \ 
\frac{\pm e_{12} \pm e_{14} \pm e_{24}\pm e_{\varnothing}}{2},
\end{align*}
so it is enough to check equality of $
\textup{L}_T$ and $\textup{Res}_{F_1}$
on them. The symmetries of the construction in \S \ref{SectionConstruction} allow us to reduce that even further and check the equality for only two roots
$
\dfrac{e_{23}+e_{24}+e_{34}+e_{\varnothing}}{2}$ and  $\dfrac{e_{12}+e_{13}+e_{23}+e_{\varnothing}}{2}.
$

\begin{lemma} We have
$
\textup{L}_T \left ( \dfrac{e_{23}+e_{24}+e_{34}+e_{\varnothing} }{2} \right )=\textup{Res}_{F_1}\left ( \dfrac{e_{23}+e_{24}+e_{34}+e_{\varnothing} }{2} \right).
$
\label{LemmaEquality1}
\end{lemma}

\begin{proof}
First, we have 
\[
 \dfrac{e_{23}+e_{24}+e_{34}+e_{\varnothing}}{2}=\pi(l-u_{23}-u_{42}).
\]
Consider the sections $[l-u_{23}]$ and $[u_{42}]$ of the elliptic fibration on  $X_T.$ 	They both intersect the component $F_{11}$ of the fiber $F_1.$ By (\ref{FormulaExplicitPeriod}) we have 
\begin{align*}
&\textup{Res}_{F_1}\left ( \dfrac{e_{23}+e_{34}+e_{24}+e_{\varnothing}}{2} \right)=
\textup{Res}_{F_1}([l-u_{23}]-[u_{42}])\\
=&\bigl[F_{1}^x,F_{11} \cap [l-u_{23}] ,F_{1}^y,F_{11} \cap [u_{42}]\bigr].
\end{align*}

Our goal is to compute the images of the points $F_{1}^x, F_{11} \cap [l-u_{23}], F_{1}^y,$ and $F_{11} \cap [u_{42}]$ under the birational isomorphism $g^{-1}$ restricted to $F_{11}.$
Denote by $U$ the intersection point of the plane $(A_3 A_4 E_{12})$ with $H_3 \cap Q$ which is different from $E_{12}.$ Similarly, denote by $V$ the intersection point of the plane $(E_{12}E_{34}E_{23})$  with $H_3 \cap Q$ which is different from $E_{12}.$ Under the birational isomorphism $g^{-1}$ the curve $F_{11}$ is mapped to the conic $H_3 \cap Q.$ 
Moreover, the restriction of $g^{-1}$ to $F_{11}$ sends $F_{1}^x$ to $E_{21},$   $F_{1}^y$ to $U,$  $F_{11} \cap [l-u_{23}]$ to $V$, and   $F_{11} \cap [u_{42}]$ to $E_{42}.$ Since $g^{-1}$ is a birational isomorphism, 

\[
\bigl[F_{1}^x,F_{11} \cap [l-u_{23}] ,F_{1}^y,F_{11} \cap [u_{42}]\bigr]=[E_{21}, V, U, E_{42}]_{H_3\cap Q}.
\]
Next, consider the projection from the point $E_{12}$ of  the conic $H_3\cap Q$ to the line $(A_2 A_4).$ We get that
\be
\begin{split}\label{FormulaEq1}
&\textup{Res}_{F_1}\left ( \dfrac{e_{23}+e_{24}+e_{34}+e_{\varnothing} }{2} \right)=[E_{21}, V, U, E_{42}]_{H_3\cap Q}\\
=&[A_2, (A_2 A_4)\cap (E_{23}E_{34}), A_4, E_{42})]_{(A_2 A_4)}.
\end{split}
\ee
On the other hand, Lemma \ref{LemmaProjectiveTriangle} implies that 
\be
\begin{split}\label{FormulaEq2}
&\textup{L}_T \left(\frac{ e_{23}+e_{24}+e_{34}+e_{\varnothing}}{2}\right)\\
=&\bigl[\widetilde{A}_2,E_{23},\widetilde{A}_3,E_{32}\bigr]\bigl[\widetilde{A}_2,E_{24},\widetilde{A}_4,E_{42}\bigr]\bigl[\widetilde{A}_3,E_{34},\widetilde{A}_4,E_{43}\bigr]\\
=&[A_4, E_{42},A_2, (A_2 A_4)\cap (E_{23}E_{34})]_{(A_2 A_4)}\\
=&[A_2, (A_2 A_4)\cap (E_{23}E_{34}), A_4, E_{42})]_{(A_2 A_4)}.
\end{split}
\ee
The statement of the lemma follows from (\ref{FormulaEq1}) and (\ref{FormulaEq2}).
\end{proof}

\begin{lemma}We have 
$
\textup{L}_T \left ( \dfrac{e_{12}+e_{13}+e_{23}+e_{\varnothing} }{2} \right )=\textup{Res}_{F_1}\left ( \dfrac{e_{12}+e_{13}+e_{23}+e_{\varnothing} }{2} \right).
$
\label{LemmaEquality2}
\end{lemma}

\begin{proof}
The proof is similar to the proof of the previous lemma. First, observe that
\[
 \frac{e_{12}+e_{13}+e_{23}+e_{\varnothing}}{2} =\pi(u_{31}-u_{23}).
\]
Next, consider the sections $u_{31}$ and $u_{23},$ which both intersect $F_{12}.$ By  (\ref{FormulaExplicitPeriod2}) we have
\begin{align*}
& \textup{Res}_{F_1}\left ( \dfrac{e_{12}+e_{13}+e_{23}+e_{\varnothing} }{2} \right)=
\textup{Res}_{F_1}([u_{31}]-[u_{23}])\\
=& [F_{1}^y,F_{12} \cap [u_{31}] ,F_{1}^x,F_{12} \cap [u_{23}]].
\end{align*}

First, we compute the images of  the points $F_{1}^y, F_{12} \cap [u_{31}], F_{1}^x$, and $F_{12} \cap [u_{23}]$ under the birational isomorphism $g^{-1}$ restricted to $F_{12}.$
Denote by $W$ the intersection point of the plane $(A_3 A_4 E_{12})$ with the conic $H_4 \cap Q$ which is different from $E_{12}.$ Under the birational isomorphism $g^{-1}$ the curve $F_{12}$ is mapped to the conic $H_4 \cap Q,$  $F_{1}^y$ to $W,$  $F_{12} \cap [u_{31}]$ to $E_{31},$  $F_{1}^x$ to $E_{21}$, and   $F_{12} \cap [u_{23}]$ to $E_{23}.$ Since $g^{-1}$ is a birational isomorphism, we have
\[
\bigl[F_{1}^y,F_{12} \cap [u_{31}] ,F_{1}^x,F_{12} \cap [u_{23}]\bigr]=[W, E_{31}, E_{21}, E_{23}]_{H_4\cap Q}.
\]

Next, consider the projection from the point $E_{12}$ of the conic $H_4\cap Q$ to the line $(A_2 A_3).$ We get that
\be \label{FormulaEq3}
\begin{split}
&\textup{Res}_{F_1}\left ( \dfrac{e_{12}+e_{13}+e_{23}+e_{\varnothing} }{2} \right)=[W, E_{31}, E_{21}, E_{23}]_{H_4\cap Q}\\
=&[A_3, (E_{12}E_{31})\cap (A_2A_3), A_2, E_{23}]_{(A_2 A_3)}.
\end{split}
\ee
 Lemma \ref{LemmaProjectiveTriangle} implies that 
\be \label{FormulaEq4}
\begin{split}
&\textup{L}_T \left(\frac{ e_{12}+e_{13}+e_{23}+e_{\varnothing}}{2}\right)\\
=&\bigl[\widetilde{A}_1,E_{12},\widetilde{A}_2,E_{21}\bigr]\bigl[\widetilde{A}_1,E_{13},\widetilde{A}_3,E_{31}\bigr]\bigl[\widetilde{A}_2,E_{23},\widetilde{A}_3,E_{32}\bigr]\\
=&[A_2, E_{23}, A_3, (E_{12}E_{31})\cap (A_2A_3) ]_{(A_2 A_3)}\\
=&[A_3, (E_{12}E_{31})\cap (A_2A_3), A_2, E_{23}]_{(A_2 A_3)}.
\end{split}
\ee
From (\ref{FormulaEq3}) and (\ref{FormulaEq4}) we get the result.
\end{proof}

Proposition \ref{PropositionEqualityLengths} follows from Lemmas \ref{LemmaEquality1} and \ref{LemmaEquality2}.
\end{proof}

\subsection{The correspondence commutes with angle function}\label{SectionEqualityAngles}

\begin{proposition} \label{PropositionEqualityAngles} For a (general) marked projective tetrahedron $T$ and the corresponding surface $(X_T,F_1,F_2)$ maps $\textup{A}_T$ and $\textup{Res}_{F_2}$ coincide.
\end{proposition}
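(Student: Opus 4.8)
The plan is to reduce the statement to Proposition \ref{PropositionEqualityLengths} applied to the dual tetrahedron, exploiting that $\textup{A}_T$ is by definition $\textup{L}_{T^\vee}\circ D$. Recall that $D$ interchanges $\textup{Q}\bigl(\textup{E}_7^\textup{L}\bigr)$ and $\textup{Q}\bigl(\textup{E}_7^\textup{A}\bigr)$, and that on the surface side interchanging the two $I_2$-fibers amounts to swapping $e_I$ and $e_\varnothing$, i.e. to the action of $D$ on markings. So it would suffice to show that the marked $D_6$-surface $X_{T^\vee}$ attached to the dual tetrahedron is isomorphic to $X_T$ with $F_1$ and $F_2$ interchanged, with the pairs $(F_i^x,F_i^y)$ preserved, and with marking $m_T$ replaced by $m_T\circ D$. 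Granting this, for $v\in\textup{Q}\bigl(\textup{E}_7^\textup{A}\bigr)$ one gets $\textup{Res}_{F_2}^{X_T}(v)=\textup{Res}_{F_1}^{X_{T^\vee}}(Dv)=\textup{L}_{T^\vee}(Dv)=\textup{A}_T(v)$, the middle equality being Proposition \ref{PropositionEqualityLengths}, and we are done.

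However, directly comparing the two blow-up/blow-down constructions is awkward, since $X_T$ is built by blowing up $Q$ at the twelve points $E_{ij}$ whereas $X_{T^\vee}$ is built from the twelve points $E_{ij}'=L_{E_{ij}}\cap R_{E_{ji}}$, and these configurations are not related by any evident automorphism of $Q$. So in practice I would prove $\textup{Res}_{F_2}=\textup{A}_T$ directly, imitating Lemmas \ref{LemmaEquality1} and \ref{LemmaEquality2}. First, by the symmetries of the construction in \S \ref{SectionConstruction} --- exactly as in the reduction preceding Lemma \ref{LemmaEquality1} --- it is enough to check the equality on two suitably chosen generating roots of $\textup{Q}\bigl(\textup{E}_7^\textup{A}\bigr)$. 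For such a root $v$ I would: write $\pi(v)$ as a difference $[s_1]-[s_2]$ of classes of sections meeting $F_2$ in the same component, using the basis $l,r,u_{ij}$ of $\textup{Pic}(X_T)$ and the dictionary of Figure \ref{FigureSimpleRoots2}; apply (\ref{FormulaExplicitPeriod}) or (\ref{FormulaExplicitPeriod2}) to present $\textup{Res}_{F_2}(v)$ as a cross-ratio on $F_{21}=g(H_1\cap Q)$ or on $F_{22}=g(H_2\cap Q)$; transport this cross-ratio through the birational map $g^{-1}$ to the conic $H_1\cap Q$ (resp. $H_2\cap Q$); project from $E_{12}$, or from a vertex, down to an edge line; and recognise the resulting expression, via Lemma \ref{LemmaProjectiveTriangle} applied to a triangle of vertices of $T^\vee$ and the identity $\textup{A}_T(e_{ij})=[H_k^\vee,\widetilde{E}_{ij},H_l^\vee,\widetilde{E}_{ji}]$, as $\textup{A}_T(v)$.

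The main obstacle is the bookkeeping in the step that transports cross-ratios through $g^{-1}$. Unlike on $F_1$, where the nodal points were $F_1^x=g([E_{21}])$, coming from an exceptional divisor, and $F_1^y=g([L]+[R]-[E_{12}]-[E_{34}]-[E_{43}])$, here both nodal points $F_2^x=g([L]-[E_{12}])$ and $F_2^y=g([R]-[E_{12}])$ are images of the strict transforms of the two rulings of $Q$ through the single point $E_{12}$; tracking where these curves meet the conics $H_1\cap Q$ and $H_2\cap Q$, and then matching the resulting cross-ratio with the one produced by Lemma \ref{LemmaProjectiveTriangle} in the \emph{dual} tetrahedron --- where the vertices $A_k$ are replaced by the polar planes $H_k^\vee$ --- is the delicate point. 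If one instead insists on the structural route of the first paragraph, the analogous obstacle is to control the relation between the twelve-point sets $\{E_{ij}\}$ and $\{E_{ij}'\}$ on $Q$ precisely enough to produce the isomorphism $X_T\cong X_{T^\vee}$.
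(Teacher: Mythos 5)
Your second route is indeed the paper's route: reduce by the symmetries of the construction to two roots of $\textup{Q}\bigl(\textup{E}_7^\textup{A}\bigr)$, realize each as $\pi([s_1]-[s_2])$ for sections written in the basis $l,r,u_{ij}$, use (\ref{FormulaExplicitPeriod})--(\ref{FormulaExplicitPeriod2}) to express $\textup{Res}_{F_2}$ as a cross-ratio on $F_{21}$ or $F_{22}$, and transport it through $g^{-1}$ to the conic $H_1\cap Q$ or $H_2\cap Q$. But the step you leave open is precisely the substance of the proof, and the move you propose for it --- ``project from $E_{12}$, or from a vertex, down to an edge line'' --- is the device of Lemmas \ref{LemmaEquality1} and \ref{LemmaEquality2} and does not work here. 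After transporting through $g^{-1}$, two of the four points on the conic are $L_{E_{12}}\cap H_i$ and $R_{E_{12}}\cap H_i$ (both nodal points of $F_2$ come from the two rulings through $E_{12}$, as you note), and the target of the comparison is a cross-ratio of points $H_k^\vee$ and $\widetilde{E}_{ij}$ on a \emph{dual} edge $(H_k^\vee H_l^\vee)$; no projection from a point of $Q$ or a vertex of $T$ onto an edge of $T$ produces that. What the paper actually does is: map the conic $H_i\cap Q$ isomorphically onto the generator $R_{E_{12}}$ by $X\mapsto L_X\cap R_{E_{12}}$, then apply polar duality with respect to $Q$ (a point of $R_{E_{12}}$ becomes a plane containing the self-dual line $R_{E_{12}}=R_{\widetilde{E}_{21}}$), rewrite everything in terms of the dual marking via $L_{E_{ij}}=L_{\widetilde{E}_{ij}}$, $R_{E_{ij}}=R_{\widetilde{E}_{ji}}$, and finally intersect the resulting pencil of planes with the dual edge $(H_k^\vee H_l^\vee)$; only then does Lemma \ref{LemmaProjectiveTriangle}, applied to the triangle with vertices $H_k^\vee$, close the computation. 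None of this mechanism appears in your outline, and you explicitly flag it as the unresolved ``delicate point,'' so the proposal is a plan rather than a proof.

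A second concrete omission: for one of the two roots (the analogue of $\frac{e_{14}+e_{24}+e_{34}+e_I}{2}$, where a section of the form $[l-u_{41}]$ rather than an exceptional curve is involved), the matching requires a genuine incidence argument --- in the paper, the verification (\ref{LastEquality}) that the line $(\widetilde{E}_{41}\widetilde{E}_{43})$, the line $(H_1^\vee H_3^\vee)$ and the plane $\langle U, R_{E_{12}}\rangle$ pass through a common point, proved by an auxiliary polar-duality argument with the point $V=(E_{12}E_{41})\cap(UE_{34})$. Your structural alternative (deducing the statement from Proposition \ref{PropositionEqualityLengths} for $T^\vee$ via an isomorphism $X_{T^\vee}\cong X_T$ exchanging $F_1$ and $F_2$) is plausible but is likewise not carried out, and you correctly concede that comparing the blow-ups at $\{E_{ij}\}$ and $\{E_{ij}'\}$ is nontrivial; as it stands neither route in the proposal contains the argument that makes the proposition true.
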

\begin{proof}
Similarly to Proposition  \ref{PropositionEqualityLengths} in \S \ref{SectionEqualityLengths} it is sufficient to check the equality $
\textup{A}_T=\textup{Res}_{F_2}$ on the two roots
\[
\dfrac{e_{12}+e_{13}+e_{14}+e_I }{2}, \ \ \dfrac{e_{14}+e_{24}+e_{34}+e_I }{2}.
\]
We do that in Lemmas \ref{LemmaEquality3} and \ref{LemmaEquality4}.

\begin{lemma} We have
$\textup{A}_T \left ( \dfrac{e_{12}+e_{13}+e_{14}+e_I }{2} \right )=\textup{Res}_{F_2}\left ( \dfrac{e_{12}+e_{13}+e_{14}+e_I }{2} \right).$
\label{LemmaEquality3}
\end{lemma}
\begin{proof}
Observe  that
\[
\frac{e_{12}+e_{13}+e_{14}+e_I}{2}=\pi(u_{31}-u_{41}).
\]
Sections  $u_{31}$ and $u_{41}$ intersect the component $F_{22}$ of the fiber $F_2.$ We have 
\be \label{Equality45}
\begin{split}
& \textup{Res}_{F_2}\left ( \dfrac{e_{12}+e_{13}+e_{14}+e_I }{2} \right)=
\textup{Res}_{F_2}([u_{31}]-[u_{41}])	\\
=&\bigl[F_{2}^y,F_{22} \cap [u_{31}] ,F_2^x,F_{22} \cap [u_{41}]\bigr]_{F_{22}}.
\end{split}
\ee

Our goal is to compute the  images of the points $F_{2}^y, F_{22} \cap [u_{31}], F_2^x$ and $F_{22} \cap [u_{41}]$ under the birational isomorphism $g^{-1}$ restricted to $F_{22}.$
 The curve $F_{22}$ is mapped to the conic $H_2 \cap Q,$   $F_{2}^y$ to  $H_2 \cap R_{E_{12}},$  $F_{22} \cap [u_{31}]$ to $E_{31},$  $F_2^x$ to $L_{E_{12}}\cap H_2,$  and  $F_{22} \cap [u_{41}]$ to $E_{41}.$ Since $g^{-1}$ is a birational isomorphism, we have
\be \label{Equality5}
\begin{split}
\bigl[F_{2}^y,F_{22} \cap [u_{31}] ,F_2^x,F_{22} \cap [u_{41}]\bigr]_{F_{22}}=
[R_{E_{12}}\cap H_2, E_{31}, L_{E_{12}}\cap H_2, E_{41}]_{H_2\cap Q}.
\end{split}
\ee
Consider the map from $H_2\cap Q$ to $R_{E_{12}}$ sending the point $X \in H_2\cap Q$ to $L_X \cap R_{E_{12}}.$ This map  is an isomorphism, so
\be \label{Equality6}
\begin{split}
&[R_{E_{12}}\cap H_2, E_{31}, L_{E_{12}}\cap H_2, E_{41}]_{H_2\cap Q}	=[H_2 \cap R_{E_{12}},L_{E_{31}} \cap R_{E_{12}},E_{12} ,L_{E_{41}} \cap R_{E_{12}}]_{R_{E_{12}}}.
\end{split}
\ee
Next we apply the duality with respect to $Q.$ Since points of $Q$ are dual to the tangent planes at these points, we obtain the following equality of cross-ratios:
\be\label{Equality7}
\begin{split}
&[H_2 \cap R_{E_{12}},L_{E_{31}} \cap R_{E_{12}},E_{12} ,L_{E_{41}} \cap R_{E_{12}}]_{R_{E_{12}}}\\
=&[\langle H_2^{\vee}, R_{E_{12}}\rangle,\ \langle L_{E_{31}}, R_{E_{12}}\rangle,\ \langle L_{E_{12}}, R_{E_{12}}\rangle,\ \langle L_{E_{41}}, R_{E_{12}}\rangle ]_{R_{E_{12}}}.
\end{split}
\ee
Recall that for all $i, j$ we have $L_{E_{ij}}=L_{\widetilde{E}_{ij}}$ and $R_{E_{ij}}=R_{\widetilde{E}_{ji}},$  so
\be \label{Equality8}
\begin{split}
&[\langle H_2^{\vee}, R_{E_{12}}\rangle,\ \langle L_{E_{31}}, R_{E_{12}}\rangle,\ \langle L_{E_{12}}, R_{E_{12}}\rangle,\ \langle L_{E_{41}}, R_{E_{12}}\rangle ]_{R_{E_{12}}}\\
=&[\langle H_2^{\vee}, R_{\widetilde{E}_{21}}\rangle,\ \langle L_{\widetilde{E}_{31}}, R_{\widetilde{E}_{21}}\rangle,\ \langle L_{\widetilde{E}_{12}}, R_{\widetilde{E}_{21}}\rangle,\ \langle L_{\widetilde{E}_{41}}, R_{\widetilde{E}_{21}}\rangle ]_{R_{\widetilde{E}_{21}}}.
\end{split}
\ee
The last cross-ratio can be computed by intersecting the four planes with the line 
$(H^{\vee}_2H^{\vee}_4):$
\be \label{Equality9}
\begin{split}
&[\langle H_2^{\vee}, R_{\widetilde{E}_{21}}\rangle,\ \langle L_{\widetilde{E}_{31}}, R_{\widetilde{E}_{21}}\rangle,\ \langle L_{\widetilde{E}_{12}}, R_{\widetilde{E}_{21}}\rangle,\ \langle L_{\widetilde{E}_{41}}, R_{\widetilde{E}_{21}}\rangle ]_{R_{\widetilde{E}_{21}}}\\
=&[H_2^{\vee},\widetilde{E}_{31}, H_4^{\vee}, (\widetilde{E}_{41}\widetilde{E}_{21})\cap (H_2^{\vee}H_4^{\vee})]_{(H_2^{\vee}H_4^{\vee})}.
\end{split}
\ee
Combining (\ref{Equality45}), (\ref{Equality5}), (\ref{Equality6}), (\ref{Equality7}), (\ref{Equality8}), and  (\ref{Equality9}) we get 
\be \label{Equality10}
\textup{Res}_{F_2}\left ( \dfrac{e_{12}+e_{13}+e_{14}+e_I }{2} \right)\\
=[H_2^{\vee},\widetilde{E}_{31}, H_4^{\vee}, (\widetilde{E}_{41}\widetilde{E}_{21})\cap (H_2^{\vee}H_4^{\vee})]_{(H_2^{\vee}H_4^{\vee})}.
\ee

From Lemma \ref{LemmaProjectiveTriangle} we conclude that
\be \label{Equality11}
\begin{split}
&\textup{A}_{T}\left ( \dfrac{e_{12}+e_{13}+e_{14}+e_I }{2} \right)\\
=&\bigl[H_4^\vee,\widetilde{E}_{21},H_3^\vee,\widetilde{E}_{12}\bigr]
\bigl[H_3^\vee,\widetilde{E}_{41},H_2^\vee,\widetilde{E}_{14}\bigr]
\bigl[H_4^\vee,\widetilde{E}_{13},H_2^\vee,\widetilde{E}_{31}\bigr]\\
=&\bigl[H_2^{\vee},\widetilde{E}_{31}, H_4^{\vee}, (\widetilde{E}_{41}\widetilde{E}_{21})\cap (H_2^{\vee}H_4^{\vee})\bigr]_{(H_2^{\vee}H_4^{\vee})}.
\end{split}
\ee
The lemma follows from (\ref{Equality10}) and (\ref{Equality11}).
\end{proof}

\begin{lemma} We have $\textup{A}_T \left ( \dfrac{e_{14}+e_{24}+e_{34}+e_I }{2} \right )=\textup{Res}_{F_2}\left ( \dfrac{e_{14}+e_{24}+e_{34}+e_I }{2} \right).$
\label{LemmaEquality4}
\end{lemma}
\begin{proof}

It is easy to see that
\[
\frac{e_{14}+e_{24}+e_{34}+e_I}{2}=\pi(l-u_{42}-u_{41}).
\]
Consider the sections $[l-u_{41}]$ and $[u_{42}]$ intersecting $F_{21}$. We have
\be\label{Equality12}
\begin{split}
& \textup{Res}_{F_2}\left ( \dfrac{e_{14}+e_{24}+e_{34}+e_I }{2} \right)=
\textup{Res}_{F_2}([l-u_{41}]-[u_{42}])	\\
=&[F_2^x,F_{21} \cap [l-u_{41}] ,F_{2}^y,F_{21} \cap [u_{42}]]_{F_{21}}.
\end{split}
\ee

Our goal is to compute  the images of the  points $F_2^x, F_{21} \cap [l-u_{41}], F_{2}^y$ and $F_{21} \cap [u_{42}]$ under the birational isomorphism $g^{-1}$ restricted to $F_{21}.$ Denote by $U$ the point of intersection of the plane $(E_{12}E_{34}E_{41})$ with $H_1 \cap Q$, which is different from $E_{34}.$ Under the birational isomorphism $g^{-1}$ the curve $F_{21}$ is mapped to the conic $H_1 \cap Q,$   $F_2^x$ to  $L_{E_{12}}\cap H_1,$  $F_{21} \cap [l-u_{41}]$ to $U,$  $F_{2}^y$ to $H_1 \cap R_{E_{12}},$  and   $F_{21} \cap [u_{42}]$ to $E_{42}.$ Since $g^{-1}$ is a birational isomorphism we have 
\be\label{Equality13}
\begin{split}
\bigl[F_2^x,F_{21} \cap [l-u_{41}] ,F_{2}^y,F_{21} \cap [u_{42}]\bigr]_{F_{21}}
=[L_{E_{12}}\cap H_1, U, H_1 \cap R_{E_{12}}, E_{42}]_{H_1\cap Q}.
\end{split}
\ee
Next, consider the map from $H_1\cap Q$ to $R_{E_{12}}$ sending $X \in H_1\cap Q$ to $L_X \cap R_{E_{12}}.$ It is an isomorphism, so
\be\label{Equality14}
\begin{split}
&[L_{E_{12}}\cap H_1, U, H_1\cap R_{E_{12}}, E_{42}]_{H_1\cap Q}\\
=&[E_{12},L_U \cap R_{E_{12}}, H_1 \cap R_{E_{12}},L_{E_{42}} \cap R_{E_{12}}]_{R_{E_{12}}}.
\end{split}
\ee
We apply the duality with respect to $Q$:
\be\label{Equality15}
\begin{split}
&[E_{12},L_U \cap R_{E_{12}}, H_1 \cap R_{E_{12}},L_{E_{42}} \cap R_{E_{12}}]_{R_{E_{12}}}\\
=&[\langle L_{E_{12}}, R_{E_{12}}\rangle,\ \langle L_U, R_{E_{12}}\rangle,\ \langle H_1^{\vee}, R_{E_{12}}\rangle,\ \langle L_{E_{42}}, R_{E_{12}}\rangle ]_{R_{E_{12}}}.
\end{split}
\ee
We have $L_{E_{ij}}=L_{\widetilde{E}_{ij}}$ and $R_{E_{ij}}=R_{\widetilde{E}_{ji}},$  so
\be\label{Equality16}
\begin{split}
&[\langle L_{E_{12}}, R_{E_{12}}\rangle,\ \langle L_U, R_{E_{12}}\rangle,\ \langle H_1^{\vee}, R_{E_{12}}\rangle,\ \langle L_{E_{42}}, R_{E_{12}}\rangle ]_{R_{E_{12}}}\\
=&[\langle L_{\widetilde{E}_{12}}, R_{\widetilde{E}_{21}} \rangle,\ \langle L_{U}, R_{\widetilde{E}_{21}}\rangle,\ \langle H_1^{\vee}, R_{\widetilde{E}_{21}}\rangle,\ \langle L_{\widetilde{E}_{42}}, R_{\widetilde{E}_{21}}\rangle ]_{R_{\widetilde{E}_{21}}}.
\end{split}
\ee
The last cross-ratio can be computed by intersecting the four planes with the line 
$(H^{\vee}_1H^{\vee}_3):$
\be\label{Equality17}
\begin{split}
&[\langle L_{\widetilde{E}_{12}}, R_{\widetilde{E}_{21}} \rangle,\ \langle L_{U}, R_{\widetilde{E}_{21}}\rangle,\ \langle H_1^{\vee}, R_{\widetilde{E}_{21}}\rangle,\ \langle L_{\widetilde{E}_{42}}, R_{\widetilde{E}_{21}}\rangle ]_{R_{\widetilde{E}_{21}}}\\
=&[H_3^{\vee}, \langle U, R_{E_{12}}\rangle \cap (H_1^{\vee}H_3^{\vee}), H_1^{\vee}, \widetilde{E}_{42}]_{(H_1^{\vee}H_3^{\vee})}.
\end{split}
\ee

Combining (\ref{Equality12}), (\ref{Equality13}), (\ref{Equality14}), (\ref{Equality15}), (\ref{Equality16}), and  (\ref{Equality17}) we get 
\be \label{Equality18}
\textup{Res}_{F_2}\left ( \dfrac{e_{14}+e_{24}+e_{34}+e_I }{2} \right)\\
=[H_3^{\vee}, \langle U, R_{E_{12}}\rangle \cap (H_1^{\vee}H_3^{\vee}), H_1^{\vee}, \widetilde{E}_{42}]_{(H_1^{\vee}H_3^{\vee})}.
\ee

 On the other hand, we have
\begin{align*}
&\textup{A}_{T}\left ( \dfrac{e_{14}+e_{24}+e_{34}+e_I }{2} \right)\\
=&\bigl[H_3^\vee,\widetilde{E}_{41},H_2^\vee,\widetilde{E}_{14}\bigr]
\bigl[H_2^\vee,\widetilde{E}_{43},H_1^\vee,\widetilde{E}_{34}\bigr]
\bigl[H_3^\vee,\widetilde{E}_{24},H_1^\vee,\widetilde{E}_{42}\bigr]\\
=&[H_3^{\vee}, (\widetilde{E}_{41}\widetilde{E}_{43})\cap (H_1^{\vee}H_3^{\vee}) ,H_1^{\vee}, \widetilde{E}_{42} ]_{(A_2 A_3)}.
\end{align*}

To prove the lemma we need to show that the lines $(\widetilde{E}_{41}\widetilde{E}_{43}),\ (H_1^{\vee}H_3^{\vee})$ and the plane $\langle U, R_{E_{12}}\rangle$ intersect in one point. Consider point $V=(E_{12}E_{41})\cap (UE_{34}).$ The plane $V^{\vee}$ passes through the line $(H_{1}^{\vee}H_3^{\vee}).$
Since $V$ is contained in the plane $\langle R_{E_{12}},L_{E_{41}}\rangle,$ the plane $V^{\vee}$ contains the point $W_1=R_{\widetilde{E}_{21}} \cap L_{\widetilde{E}_{41}}.$ Similarly, $V$ is contained in the plane $\langle L_U, R_{E_{34}}\rangle,$ so $V^{\vee}$ contains $W_2=L_U \cap R_{\widetilde{E}_{43}}.$ 

Consider the point $W=(W_1W_2)\cap (H_1^{\vee}H_3^{\vee}).$  We claim that 
\be \label{LastEquality}
W=(\widetilde{E}_{41}\widetilde{E}_{43}) \cap (H_1^{\vee}H_3^{\vee}) \cap \langle U, R_{E_{12}}\rangle.
\ee
Indeed, clearly $W\in (H_1^{\vee}H_3^{\vee}).$ Since
\[
(W_1W_2)=\langle L_U ,R_{\widetilde{E}_{21}}\rangle \cap \langle R_{\widetilde{E}_{43}} ,L_{\widetilde{E}_{41}}\rangle,
\]
the point $W$ is contained in the  plane $\langle L_U ,R_{\widetilde{E}_{21}}\rangle=\langle U ,R_{E_{12}}\rangle.$ By the same reason, $W$ is contained in  the plane $\langle R_{\widetilde{E}_{43}} ,L_{\widetilde{E}_{41}}\rangle.$ Since $W\in (H_1^\vee H_2^\vee H_3^\vee),$ we have 
\[
W \in \langle R_{\widetilde{E}_{43}} ,L_{\widetilde{E}_{41}}\rangle \cap (H_1^\vee H_2^\vee H_3^\vee)=(\widetilde{E}_{41}\widetilde{E}_{43}),
\] 
which implies $(\ref{LastEquality}).$ From here the lemma follows. 
\end{proof}
Proposition \ref{PropositionEqualityAngles} follows from Lemmas \ref{LemmaEquality3} and \ref{LemmaEquality4} .
\end{proof}

\bibliographystyle{alpha}      
\bibliography{Tetrahedra_Bibliography}

\end{document}